\newcommand{\Z}{\mathbb{Z}}
\newcommand{\C}{\mathbb{C}}
\newcommand{\even}{\operatorname{even}}
\newcommand{\odd}{\operatorname{odd}}
\newcommand{\disj}{orthogonal}
\newcommand{\rk}{\operatorname{rk}}
\newcommand{\basis}{\mathcal{B}}
\newcommand{\der}{\partial}
\newcommand{\join}{\vee}
\newcommand{\arr}{\mathcal{H}}
\newcommand{\disjoin}{\ovee}
\newcommand{\irr}{\operatorname{ind}}
\newcommand{\dec}{\operatorname{dec}}
\newcommand{\data}{\mathcal{D}}
\newcommand{\ideal}{\mathcal{I}}
\newcommand{\card}[1]{\lvert{#1}\rvert}
\newcommand{\id}{\operatorname{id}}
\newcommand{\atoms}{\mathbb{A}}
\newcommand{\dsum}{\oplus}
\newcommand{\Orlik}{\mathcal{O}}
\newcommand{\alg}{\tilde{\mathcal{A}}}
\renewcommand{\Im}{\operatorname{Im}}
\newcommand{\meet}{\wedge}
\newcommand{\proj}{\operatorname{proj}}
\newcommand{\St}{\operatorname{St}}
\newcommand{\Sym}{\operatorname{Sym}}
\newcommand{\Ker}{\operatorname{Ker}}
\newcommand{\admis}{$L$-contractible}
\newcommand{\admiss}{compatible}
\newcommand{\reg}{\operatorname{reg}}
\newcommand{\im}{\operatorname{im}}
\newcommand{\codim}{\operatorname{codim}}
\newcommand{\sign}{\operatorname{sign}}
\newcommand{\Rperp}{\mathfrak{M}}
\newcommand{\zono}{\mathcal{Z}}
\newcommand{\facets}{\mathfrak{E}}
\newcommand{\edges}{\mathcal{E}}
\newcommand{\bnd}{\delta}
\newcommand{\module}{\mathfrak{M}}
\newcommand{\family}{\mathcal{A}}
\newcommand{\rest}{|}
\newcommand{\sprod}[2]{\left\langle#1,#2\right\rangle}
\newcommand{\abs}[1]{\lvert#1\rvert}
\newcommand{\indexset}{\mathbb{I}}
\newcommand{\gen}[2]{\left(#1\right)_{\Sym #2}}
\newcommand{\extalg}{\mathcal{E}}       %also edges
\newcommand{\jdeal}{\mathcal{J}}
\newcommand{\OS}{\mathcal{A}}
\newtheorem{theorem}{Theorem}[section]
\newtheorem{lemma}[theorem]{Lemma}
\newtheorem{proposition}[theorem]{Proposition}
\newtheorem{remark}[theorem]{Remark}
\newtheorem{definition}[theorem]{Definition}
\newtheorem{corollary}[theorem]{Corollary}
\newtheorem{example}[theorem]{Example}
\begin{document}

\title[Relation spaces]{Relation spaces of hyperplane arrangements and modules defined by graphs of fiber zonotopes}
\author{Tobias Finis}
\address{Freie Universit\"at Berlin, Institut f\"ur Mathematik, Arnimallee 3, D-14195 Berlin,
%Mathematisches Institut, Heinrich-Heine-Universit\"at D\"usseldorf, Universit\"atsstr. 1, 40225 D\"usseldorf,
Germany}
\thanks{Authors partially sponsored by grant \# 964-107.6/2007 from the German-Israeli Foundation for Scientific Research and Development.
First named author supported by DFG Heisenberg grant \# FI 1795/1-1.}
\email{finis@math.fu-berlin.de}
\author{Erez Lapid}
\address{Einstein Institute of Mathematics, The Hebrew University of Jerusalem, Jerusalem, 91904, Israel, and
Department of Mathematics, Weizmann Institute of Science, Rehovot, 76100, Israel}
\email{erez.m.lapid@gmail.com}
\date{\today}
\keywords{Geometric lattices, hyperplane arrangements, Coxeter groups, fiber polytopes}
\subjclass[2010]{Primary 05E40; Secondary 20F55, 52C35, 16E45, 06C10}

\begin{abstract}
We study the exactness of certain combinatorially defined complexes
which generalize the Orlik-Solomon algebra of a geometric lattice. The main results pertain to complex reflection
arrangements and their restrictions. In particular, we consider the corresponding relation complexes and give
a simple proof of the $n$-formality of these hyperplane arrangements.
As an application, we are able to bound the Castelnouvo-Mumford regularity of certain modules
over polynomial rings associated to Coxeter arrangements (real reflection arrangements) and their restrictions. The modules in question
are defined using the relation complex of the Coxeter arrangement and fiber polytopes of the dual Coxeter zonotope.
They generalize the algebra of piecewise polynomial functions on the original arrangement.
\end{abstract}

\maketitle

\setcounter{tocdepth}{1}
\tableofcontents

\section{Introduction}

This paper deals with two related topics.
The first is the study of the relation complexes of hyperplane arrangements and of several related constructions.
After a summary of some basic combinatorial facts in \S \ref{preliminaries},
we consider in \S \ref{variant} complexes graded by geometric lattices, and specifically an inductive construction of such
complexes starting from a collection of injections $U_a \hookrightarrow U_0$ indexed by the atoms $a$ of the lattice (cf.~Definition \ref{defminimalcomplex}).
We call the complexes obtained from this construction \emph{minimal complexes}, and
our main interest is in criteria for their exactness.
Apart from the well-known Orlik-Solomon algebra (\cite{MR558866}, \cite[Ch.~3]{MR1217488}, \cite{MR1859708}),
an important special case is the relation complex of a hyperplane arrangement,
which describes
the linear relations between its defining linear functionals.
In \S \ref{reflarrangements}, we give a simple proof of the
$n$-formality of restrictions of complex reflection arrangements, i.e.~we
show the exactness of their relation complexes (Theorem \ref{roots}).
In fact, we prove a somewhat stronger result, and provide an explicit contracting homotopy that is compatible with the grading by the
intersection lattice (in our terminology, we show that the relation complex is \emph{\admis}, cf.~Definition \ref{defadmissible}).
While $n$-formality was known for reflection arrangements and restrictions of
Coxeter arrangements as a consequence of their freeness \cite{MR1280576},
our construction of an $L$-homotopy seems to be a new result.
Also, our approach is direct and elementary and uses only the basic properties of complex reflection groups.
We then consider in \S \ref{algstr} a general multilinear algebra construction, which we call the generalized Orlik-Solomon algebra
of a complex graded by a geometric lattice. At the level of complexes, this construction is shown to preserve the property of being \admis.
This allows us to establish the exactness of certain minimal complexes constructed
from relation spaces, if the relation complex itself is \admis\ (Corollaries \ref{CorDefideals} and \ref{CorProj}).

The second topic is given by certain algebras over polynomial rings defined by the labeled graphs of fiber zonotopes.
Here, the main object is an analog of an exact sequence of Bernstein-Lunts
\cite[\S\S15.7-8]{MR1299527} and Brion \cite[p.~12]{MR1431267} for
the algebra of piecewise polynomial functions on a complete simplicial fan, which relates
this algebra to the polynomial functions on all cones.
%The algebra of piecewise polynomial functions
%can be described as the algebra of all vectors of polynomial functions indexed by the
%vertices of a graph which satisfy matching conditions along the edges.
In \S \ref{zonotope}, we consider fans obtained from hyperplane arrangements in a real vector space $U^*$. In the dual picture, these correspond to
zonotopes (Minkowski sums of line segments) in the dual space $U$. We derive a criterion for the
existence of an exact sequence of Bernstein-Lunts type for modules
defined in a purely combinatorial way by the labeled graph of a zonotope (Proposition \ref{main}). Using the results of the first part,
we then apply this criterion to the projected
arrangements $\arr_P$ governing the combinatorics of intersections of an arrangement $\arr$
of rank $n$ with the parallel translates
of a fixed linear subspace $P^\perp \subset U^*$ of dimension $k$ (which is assumed to satisfy a
general position condition, cf.~Definition \ref{generalposition}).
Dually, we pass from a zonotope $\zono$ dual to $\arr$ to the fiber polytope $\zono_P$ in the sense of \cite{MR1166643} of its projection
in direction $P \subset U$. We use the $1$-skeleton of $\zono_P$ and the relation complex of $\arr$ to
define a certain algebra $\Rperp$ over the symmetric algebra $R$ of the rank $k$ space of the relation complex,
which we call the \emph{$k$-th order relation algebra} of the arrangement $\arr$ (with respect to $P$).
Our main result (Theorem \ref{maintheorem}) is the existence of an exact sequence of Bernstein-Lunts type for $\Rperp$, provided the relation complex of $\arr$ is
\admis\ (in particular if $\arr$ is a restriction of a Coxeter arrangement).
As a consequence, the relation algebra $\Rperp$ is generated as an $R$-module
by its homogeneous elements of degree at most $n-k$.
In the case $k=0$ we recover the algebra
of piecewise polynomial functions on the arrangement. (Note here that restrictions of Coxeter arrangements are simplicial.)

%\marginpar{Can we make a concrete statement?}
We note that the relation algebra $\Rperp$ has in general a more complicated structure than the algebra of piecewise polynomial functions on a
complete simplicial fan. For example, it is in general not free as a module
over the polynomial ring $R$ (cf.~\S \ref{SubsectionModules}, Remark \ref{maintheoremremark}).
% (cf.~\S \ref{concluding} below).
In recent years, combinatorially defined algebras of the type we are considering
have been studied by Guillemin and Zara \cite{MR1701922, MR1823050, MR1959894} following the
work of Goresky-Kottwitz-MacPherson on equivariant cohomology \cite{MR1489894},
but their focus is different, since they are interested in cases where the
algebra in question is actually free over the underlying polynomial ring.

%It is natural to ask whether our result extends to arbitrary simplicial hyperplane arrangements. Although it is easy to see that this is
%true for ranks up to four, the general case remains open.

The original motivation for this paper was the study of the push-forward of piecewise power series on polyhedral fans
(a canonical $n$-th order derivative at the origin) in the special case of
so-called compatible families. The problem arose in the study of the spectral side of Arthur's trace formula.
For this application it is necessary to consider families
with values in (in general) non-commutative finite-dimensional algebras of
characteristic zero.
The main result of our work was an alternative formula for the
canonical push-forward of compatible families on restrictions of Coxeter arrangements.
Eventually, we found a simpler proof of this formula that does not use the results of this paper
and moreover applies to all polyhedral fans.
We refer the interested reader to \cite{MR2811598}
for the precise algebraic formula in the new setup and to \cite{MR2811597} for the application to the trace formula.
Nevertheless, we hope that the combinatorial material presented in this paper is of independent interest.
A sketch of the original approach and its connection to Theorem \ref{maintheorem} is contained in \S \ref{concluding}.

In \S \ref{higherbruhat} we explicate our construction in the special case of the root systems $A_n$, where we can draw a
connection to the theory of discriminantal hyperplane arrangements and higher Bruhat orders.
The appendix contains an explicit description
of relation complexes and $L$-homotopies for the restrictions of non-exceptional Coxeter arrangements.

We thank Joseph Bernstein, Tom Braden, Stefan Felsner, Gil Kalai, Allen Knutson, Eric Opdam, Ehud de Shalit, Sergey Yuzvinsky
and G\"unter Ziegler for useful discussions
and interest in the subject matter of this paper.
%We are grateful to G\"unter Ziegler for pointing out the reference \cite{MR1434477} to us.
%Hiroaki Terao
We also thank the Max Planck Institute for Mathematics and the Hausdorff Research Institute for Mathematics, Bonn, where
a part of this paper was worked out.
Finally, we are indebted to the referee for a careful reading of the manuscript and for useful suggestions.

A previous version of this paper forms a part of the first named author's 2009 Habilitation at Heinrich-Heine-Universit\"{a}t D\"{u}sseldorf.

%\section{Relation spaces and admissible configurations}

\section{Geometric lattices and hyperplane arrangements} \label{preliminaries}
In this section we collect some mostly standard facts about geometric lattices and hyperplane arrangements
and set up some notation. We refer the reader to \cite[Ch.~II]{MR1434477} and \cite{MR1217488} for more details.

\subsection{Basic definitions}
For any poset $(L,<)$
%with rank function $\rk=\rk_L$
we write $x\prec y$, or equivalently $y\succ x$, if $y$ covers $x$,
i.e.~if $x<y$ and there is no $z \in L$ with $x < z < y$.
%$\rk(y)=\rk(x)+1$.
As usual, we denote the join and meet in a lattice $L$ by $\join_L$ and $\meet_L$, respectively,
or simply by $\join$ and $\meet$, if $L$ is clear from the context. In the following we always assume that $L$ is finite.
Recall that a (finite) lattice $L$ is called \emph{geometric}, if it is atomic (i.e.~every element is
the join of atoms) and semimodular (i.e.~$a \meet b \prec a$ implies $b \prec a \join b$ for all $a$, $b \in L$).
Semimodularity is equivalent to the existence of a rank function $\rk=\rk_L$ satisfying the inequality
$\rk (a \meet b) + \rk (a \join b) \le \rk (a) + \rk (b)$ \cite[Theorem 2.27]{MR1434477}.
Let now $L$ be a geometric lattice of rank $\rk L = n$ with minimal element $0$.
Set
\[
L_i=\{x\in L:\rk(x)=i\},\ \ \ i=0,\dots,n,
\]
and denote by $\atoms(L)=L_1$ the set of atoms of $L$.
%We denote the maximal element of $L$ by $\term_L$.
For any $x\in L$ the lower interval $L_{\le x}=\{y\in L:y\le x\}$
is again a geometric lattice (with the restricted rank function), whose set of atoms is $\atoms_{\le x}=\{a\in\atoms:a\le x\}$.
We write $L_{\le x,i}=(L_{\le x})_i=L_i\cap L_{\le x}$.
% It is called the restriction of $L$ to $x$.
Similarly, the upper interval $L_{\ge x}=\{y\in L:y\ge x\}$ is a geometric lattice with minimal element $x$
and $\rk_{L_{\ge x}}(y)=\rk_L(y)-\rk_L(x)$ for any $y\in L_{\ge x}$.

The basic example of a geometric lattice, which provides most of the intuition,
is the intersection lattice of a hyperplane arrangement. More precisely,
let $K$ be a field, $U$ a finite-dimensional vector space over $K$
and $\arr$ a finite collection of hyperplanes in the dual vector space $U^*$.
The set of all possible intersections of elements of $\arr$ forms a geometric lattice
under inverse inclusion. The minimal element of the lattice is $U^*$ itself (the empty intersection),
the atoms are the elements of $\arr$ and the maximal element is the intersection of all hyperplanes
in $\arr$. The join is the intersection and the rank function is the codimension in $U^*$.
Dually, the map
\[
H\mapsto H^\perp=\{u\in U:\sprod\lambda u=0\text{ for all }\lambda\in H\}
\]
defines a bijection between $\arr$ and a set of lines $\tilde\arr$ in $U$.
Sometimes, it is convenient to fix a choice of a non-zero vector $u_H\in H^\perp$ for each $H\in\arr$.
The lattice $L$ is isomorphic to the lattice $\tilde L$ consisting of the
linear spans of subsets (the span lattice) of $\tilde\arr$.
In $\tilde L$, the minimal element is the zero subspace, the join is the sum and the rank function is the dimension.
For any $x\in L$, the lower interval $L_{\le x}$ is the intersection lattice of the hyperplane arrangement
$\arr_{\le x}$ in the space $U^*$ consisting of the hyperplanes in $\arr$ containing $x$.
%(We could also consider their projections in $U^*/x$, if we wish.)
%obtained by projecting the hyperplanes in $\arr$ containing $x$ along $x$.
Similarly, $L_{\ge x}$ is the intersection lattice of the \emph{restricted hyperplane arrangement} $\arr_{\ge x}$
%\marginpar{Confusing
%terminology: inconsistent with the rest}
in the space $x$ given by
$\{H\cap x:H\in\arr,H\not\supset x\}$. Dually, $\widetilde{L_{\le x}}$ is the span lattice in $x^\perp$ of all lines $H^\perp$,
$H\supset x$, while $\widetilde{L_{\ge x}}$ is the span lattice in $U/x^\perp$ of the projections modulo $x^\perp$
of $H^\perp$, $H\not\supset x$.

\subsection{Decomposability and dependency}
Let $L$ be a geometric lattice and $x\in L\setminus\{0\}$. Recall that for $y,z\in L$ the following conditions are equivalent
(cf.~\cite[Theorem 2.45]{MR1434477}):
\begin{enumerate}
\item $y\join z=x$, $y\meet z=0$ and $y$ is a distributive element of $L_{\le x}$, i.e. the relations
$y \meet (a \join b) = (y \meet a) \join (y \meet b)$ and
$a \meet (y \join b) = (a \meet y) \join (a \meet b)$, as well as the corresponding dual relations obtained by
interchanging $\meet$ and $\join$, hold for all $a, b \in L_{\le x}$ (cf. [ibid., p. 58]).
\item $\atoms_{\le x}=\atoms_{\le y}\sqcup\atoms_{\le z}$.
\item The join gives an isomorphism of geometric lattices between $L_{\le y}\times L_{\le z}$
and $L_{\le x}$.
\end{enumerate}
%
%\Erez{Decide where to use $\sqcup$ or $\coprod$ throughout}
%
In this case we write $x=y\disjoin z$.

\begin{definition} We say that $x$ is \emph{decomposable}
if $x=y\disjoin z$ for some $y,z\in L\setminus\{0\}$;
otherwise $x$ is called \emph{indecomposable}.
We say that $L$ is indecomposable if its maximal element is indecomposable.
The sets of decomposable and indecomposable elements of $L\setminus\{0\}$
are denoted by $L_{\dec}$ and $L_{\irr}$, respectively.
\end{definition}

Note that $L_{\irr}\supset\atoms(L)$.
Also, if $0\ne y\le x$, then $y$ is indecomposable as an element of $L$ if and only if
it is indecomposable as an element of $L_{\le x}$; in other words,
$(L_{\le x})_{\irr}=L_{\le x}\cap L_{\irr}$ and $(L_{\le x})_{\dec}=L_{\le x}\cap L_{\dec}$.
We will also denote these sets by $L_{\le x,\irr}$ and $L_{\le x,\dec}$, respectively.
The following lemma is easy to prove (see \cite[Theorem 2.47]{MR1434477} for the first part, which easily implies the
second).

\begin{lemma} \label{irrdecom}
\begin{enumerate}
\item Any $x\in L$ can be written uniquely (up to permutation) as $\disjoin_{i=1}^m x_i$
where the elements $x_i \in L$ are indecomposable. (If $x=0$ then $m=0$.) In particular, $L_{\le x}=L_{\le x_1}\times
\dots\times L_{\le x_m}$.
\item If $x,y\in L_{\irr}$ are not disjoint (that is, if $x\meet y\ne0$) then
$x\join y\in L_{\irr}$.
\end{enumerate}
\end{lemma}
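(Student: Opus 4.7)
The plan is to prove Part 1 first, then use it to deduce Part 2.

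For existence in Part 1 I would induct on $\rk(x)$. If $x = 0$ the empty product works, and if $x$ is indecomposable there is nothing to show. Otherwise $x = y \disjoin z$ with $y, z \in L \setminus \{0\}$; condition (3) gives $L_{\le x} \cong L_{\le y} \times L_{\le z}$, so $\rk x = \rk y + \rk z$ and the induction hypothesis applies to $y$ and $z$. For uniqueness, given two decompositions $x = \disjoin_i x_i = \disjoin_j y_j$ into indecomposables, I would use the isomorphism $L_{\le x} \cong \prod_i L_{\le x_i}$ from (3). Each $y_j$ corresponds to a tuple $(y_j^{(i)})_i$ with $y_j = \disjoin_i y_j^{(i)}$; indecomposability of $y_j$ forces all but one component to vanish, so $y_j \le x_{\sigma(j)}$ for a unique index $\sigma(j)$. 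Reading $x = \disjoin_j y_j$ componentwise in the product then gives $x_i = \disjoin_{\sigma(j) = i} y_j$, and the indecomposability of $x_i$ forces exactly one $j$ in the preimage, with $y_j = x_i$.

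For Part 2, suppose $x, y \in L_{\irr}$ satisfy $x \meet y \ne 0$, and write $x \join y = z_1 \disjoin \dots \disjoin z_m$ via Part 1. Applying the preceding observation to the indecomposable elements $x, y$ viewed inside $L_{\le x \join y} \cong \prod_i L_{\le z_i}$ yields unique indices $i_1, i_2$ with $x \le z_{i_1}$ and $y \le z_{i_2}$. Any atom $a \le x \meet y$ lies below both $z_{i_1}$ and $z_{i_2}$; since the $z_i$ are pairwise disjoint and $a \ne 0$, we must have $i_1 = i_2 =: i$. Then $x \join y \le z_i \le x \join y$, which forces $m = 1$ and hence $x \join y \in L_{\irr}$.

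The main subtlety is the observation at the heart of the uniqueness argument: an element of $L_{\le x}$ that is indecomposable in $L$ must correspond under the product isomorphism to a tuple in $\prod_i L_{\le x_i}$ with exactly one nonzero coordinate, since any tuple with two or more nonzero entries would yield a nontrivial disjoint decomposition. Once this is noted, both parts of the lemma follow formally from the equivalence of the three characterizations of $\disjoin$ recorded above.
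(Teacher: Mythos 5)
Your proof is correct. The paper does not actually prove the lemma: it cites Aigner, Theorem~2.47, for Part~1 and remarks only that Part~1 easily implies Part~2. Your deduction of Part~2 — decompose $x\join y$ into indecomposables via Part~1, observe that each of $x$ and $y$ must sit below a single factor, and use a common atom below $x\meet y\ne 0$ to force those factors to coincide — is exactly the kind of argument the paper has in mind, and your self-contained treatment of Part~1 via the product isomorphism $L_{\le x}\cong\prod_i L_{\le x_i}$ is sound.
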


The following notion extends a standard notion for atoms.
\begin{definition} \label{dependentdefinition}
Elements $x_1,\dots,x_m$ of a geometric lattice $L$ are called \emph{dependent} if $\sum_{i=1}^m \rk(x_i)>\rk (\join x_i)$.
Otherwise, i.e.~if $\sum_{i=1}^m \rk(x_i)=\rk (\join x_i)$, then $x_1,\dots,x_m$ are called \emph{independent}.
\end{definition}

We collect some simple properties in the following lemma.
\begin{lemma} \label{deptriv}
Suppose that $x_1,\dots,x_m\in L$ are dependent.
\begin{enumerate}
\item Any sequence containing $x_1,\dots,x_m$ is also dependent.
\item If $y_1,\dots,y_m\in L$ are such that $y_i\ge x_i$ for all $i$,
then $y_1,\dots,y_m$ are also dependent.
\item \label{precdep}
Let $1 \le j \le m$, $y_i = x_i$ for $i \ne j$ and $y_j \prec x_j$.
%Suppose that $y_1,\dots,y_m\in L$ and there exists $1\le j\le m$ such that
%$y_i=x_i$ for $i\ne j$ and $y_j\prec x_j$.
Then either $y_1,\dots,y_m$ are dependent or $\join y_i = \join x_i$.
\end{enumerate}
\end{lemma}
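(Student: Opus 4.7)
The plan is to deduce all three parts from the submodular inequality $\rk(a\meet b)+\rk(a\join b)\le\rk(a)+\rk(b)$ (which gives in particular the subadditivity $\rk(a\join b)\le\rk(a)+\rk(b)$) together with the semimodularity axiom ``$a\meet b\prec b$ implies $a\prec a\join b$''. In all three parts I set $C=\join_{i\ne 1} x_i$ when reducing to a change in one coordinate, and in (3) I may assume $j=1$ without loss of generality.

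For (1), it suffices to add one element $y$ and iterate: subadditivity gives $\rk(\join x_i)+\rk(y)\ge\rk((\join x_i)\join y)$, so
\[
\sum_i \rk(x_i)+\rk(y) > \rk(\join x_i)+\rk(y)\ge \rk\bigl((\join x_i)\join y\bigr),
\]
which is the required strict inequality. For (2), by iteration I may assume $y_i=x_i$ for $i\ne 1$ and $y_1\ge x_1$. Applying submodularity to $y_1$ and $x_1\join C$, and noting that $y_1\meet(x_1\join C)\ge x_1$ (because $y_1\ge x_1$ and $x_1\join C\ge x_1$), I obtain
\[
\rk(y_1)+\rk(x_1\join C)\ge \rk(x_1)+\rk(y_1\join C),
\]
i.e.\ $\rk(y_1)-\rk(x_1)\ge \rk(y_1\join C)-\rk(x_1\join C)$. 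Adding $\sum_{i\ge 2}\rk(x_i)$ to both sides and using the assumed strict inequality $\sum \rk(x_i)>\rk(x_1\join C)$ gives $\rk(y_1)+\sum_{i\ge 2}\rk(x_i)>\rk(y_1\join C)$, which is the dependence of $y_1,x_2,\dots,x_m$.

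For (3), with $y_1\prec x_1$, the crux is a dichotomy coming from semimodularity. Consider $x_1\meet(y_1\join C)$; it sits between $y_1$ and $x_1$, so by $y_1\prec x_1$ it equals either $x_1$ or $y_1$. In the first case $x_1\le y_1\join C$, hence $y_1\join C=x_1\join C$, giving the second alternative of the claim. In the second case, semimodularity applied to $a=x_1$, $b=y_1\join C$ yields $y_1\join C\prec x_1\join C$, so $\rk(y_1\join C)=\rk(x_1\join C)-1$; since also $\rk(y_1)=\rk(x_1)-1$, we compute
\[
\sum_i \rk(y_i)=\sum_i \rk(x_i)-1 \ge \rk(x_1\join C)=\rk(y_1\join C)+1>\rk(\join y_i),
\]
using the assumed dependence of $x_1,\dots,x_m$ in the middle inequality. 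Thus $y_1,\dots,y_m$ are dependent.

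The only genuinely delicate point is the dichotomy in (3): one must use semimodularity (not just submodularity) to see that either the join is unchanged or its rank drops by exactly one. Once this is isolated, all three statements are short bookkeeping with the rank function.
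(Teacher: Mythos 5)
Your proof is correct. The paper states this lemma without proof (it is introduced as a collection of ``simple properties''), so there is nothing to compare against; your argument is the natural one and all three parts check out: (1) is an immediate iteration of subadditivity, (2) is submodularity applied to $y_1$ and $x_1\join C$ together with $\rk(y_1\meet(x_1\join C))\ge\rk(x_1)$, and (3) is the cover dichotomy followed by a rank count.

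One small overclaim in your closing remark: you say that in (3) ``one must use semimodularity (not just submodularity)''. In fact the dichotomy follows already from the rank inequality. Applying submodularity to $x_1$ and $y_1\join C$, and using $\rk(x_1\meet(y_1\join C))\ge\rk(y_1)=\rk(x_1)-1$ and $x_1\join(y_1\join C)=x_1\join C$, one gets $\rk(x_1\join C)\le\rk(y_1\join C)+1$; combined with the trivial $\rk(y_1\join C)\le\rk(x_1\join C)$ this forces $\rk(y_1\join C)\in\{\rk(x_1\join C)-1,\ \rk(x_1\join C)\}$, and in the equality case the two comparable elements $y_1\join C\le x_1\join C$ of the same rank must coincide. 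Since the paper records that the cover axiom and the rank inequality are equivalent formulations of semimodularity, this is only a matter of taste, not correctness, but your remark overstates the necessity of the cover form.
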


\subsection{Truncation} \label{truncation}
In \S\S \ref{algstr} and \ref{zonotope} below we will make use of the following type of operation.

%We will consider the following type of operation.

\begin{definition} \label{truncdefinition}
Let $L$ and $\Lambda$ be two geometric lattices, and $0 \le k \le n = \rk L$.
We say that a monotone map $l: L_{\ge k} \to \Lambda$ is a \emph{non-degenerate
$k$-th truncation},
if
\begin{enumerate}
\item $\rk_\Lambda (l (x)) = \rk_L (x) - k$ for all $x \in L_{\ge k}$,
\item $l (x \join_L y) = l (x) \join_\Lambda l(y)$ for $x, y \in L_{\ge k}$ with
$\rk_L (x \meet y) \ge k$.
\end{enumerate}
\end{definition}

The basic example is the following.
%Once again, this definition is modeled on hyperplane arrangements.
Let $\arr$ be a hyperplane arrangement in $U^*$ with intersection lattice
$L$, and let $P \subset U$ be a subspace of codimension $k$.
Denote by $\pi_P$ the canonical projection $U^* \to P^*$.
Assume that for all $x \in L_k$ we have
$x \cap P^\perp = \{0\}$. Then
$\codim \pi_P (x) = \rk (x) - k$ for any $x \in L_{\ge k}$,
and in particular the images under
$\pi_P$ of the elements of $L_{k+1}$ (which are not necessarily distinct)
%\marginpar{Add that they are distinct
%in the definition general position?}
form a new hyperplane arrangement $\arr_P$ in $P^*$.
If $\Lambda$ is the intersection lattice of $\arr_P$, then the
canonical map $l: L_{\ge k} \to \Lambda$, $x \mapsto \pi_P (x)$, is
a non-degenerate $k$-th truncation map. Indeed, we already verified property (1), and if $x$ and $y$ are contained in $z\in L_k$,
then $\pi_P(x\cap y)=\pi_P(x)\cap\pi_P(y)$, since $\pi_P$ is injective on $z$, which gives (2).

For later use, we formulate the following slightly stronger condition on $P$.

\begin{definition} \label{generalposition}
We say that a subspace $P \subset U$ of codimension $k$ is in \emph{general position} (with respect to the hyperplane arrangement $\arr$),
if $x \cap P^\perp = \{0\}$ for all $x \in L_k$, and in addition, the hyperplanes $\pi_P (x)$, $x\in L_{k+1}$ of $P^*$
are distinct.
\end{definition}

Note that under this condition the canonical map $l: L_{\ge k} \to \Lambda$ is a non-degenerate $k$-th truncation map
that maps $L_{k+1}$ bijectively to
the set of atoms of $\Lambda$.

For an arbitrary geometric lattice $L$, the $k$-th Dilworth truncation $D_k (L)$ of $L$ (also called the Dilworth completion) provides another,
purely combinatorial example for Definition \ref{truncdefinition} (see
\cite[pp. 299--303]{MR1434477} for more details). If $L$, $\Lambda$ and $l$ arise from the projection of a hyperplane
arrangement as before, then the constructions are connected as follows: for all $P$ in a non-empty Zariski open subset of the
Grassmannian of all codimension $k$ subspaces of $U$,
the intersection lattice $\Lambda$ of the arrangement $\arr_P$ is isomorphic to $D_k(L)$ \cite{MR843372}.
We will not make use of this fact.

%Moreover, if $P$ belongs to a suitable (non-empty) Zariski open subset of the Grassmannian
%then we can describe $\Lambda$ purely lattice-theoretically as the $k$-th \emph{Dilworth truncation}
%of $L$. Cf.~\cite{MR843372, MR849396, MR1434477}.
%The latter is a geometric lattice $L^{(k)}$ with a natural inclusion \marginpar{Does it depend on any choices?}
%$L_{> k} \hookrightarrow L^{(k)}$ of posets which is a non-degenerate $k$-th truncation map.
%By \cite{, MR849396}, in the case of hyperplane arrangements the projection construction
%above yields in fact the $k$-th Dilworth truncation of $L$ if the subspace $P$ is in general position.

%We record the following simple fact.

\begin{lemma} \label{trunclemma}
Let $L$ and $\Lambda$ be geometric lattices, $0 \le k \le\rk L$ and $l: L_{\ge k} \to \Lambda$ a non-degenerate
$k$-th truncation map.
Let $\tilde x \in \Lambda$ and let $x_1,\dots,x_m \in L_{\ge k}$ be the maximal elements of $L_{\ge k}$ with the property that
$l(x_i) \le_\Lambda \tilde x$. Then
\[
\{ x \in L_{\ge k}: l(x) \le_\Lambda \tilde x \}=\coprod_{i=1}^m(L_{\le x_i}\cap L_{\ge k}).   %\bigcupdot MnSymbol
\]
\end{lemma}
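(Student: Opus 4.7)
The plan is to observe that $S := \{x \in L_{\ge k} : l(x) \le_\Lambda \tilde x\}$ is a (downward closed) order ideal in the poset $L_{\ge k}$, so that every element of $S$ sits under one of the maximal elements $x_1,\dots,x_m$. The only real content is then that the cover of $S$ by the intervals $L_{\le x_i} \cap L_{\ge k}$ is actually disjoint, and this is where the non-degeneracy hypothesis on $l$ enters.

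In detail, I would first note that monotonicity of $l$ gives the inclusion $\supseteq$: if $x \le_L x_i$ with $x \in L_{\ge k}$, then $l(x) \le_\Lambda l(x_i) \le_\Lambda \tilde x$. For the inclusion $\subseteq$, given any $x \in S$, the set of elements of $S$ above $x$ is non-empty (it contains $x$) and finite, so it contains an element $x_i$ maximal in $S$, whence $x \in L_{\le x_i} \cap L_{\ge k}$.

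The main step is disjointness. Suppose $y \in (L_{\le x_i} \cap L_{\ge k}) \cap (L_{\le x_j} \cap L_{\ge k})$ for some $i,j$. Then $y \le_L x_i \meet_L x_j$, so $\rk_L(x_i \meet_L x_j) \ge \rk_L(y) \ge k$. Property (2) of Definition \ref{truncdefinition} now applies and gives
\[
l(x_i \join_L x_j) \;=\; l(x_i) \join_\Lambda l(x_j) \;\le_\Lambda\; \tilde x,
\]
so that $x_i \join_L x_j \in S$. Since $x_i \le_L x_i \join_L x_j$ and $x_j \le_L x_i \join_L x_j$, the maximality of $x_i$ and $x_j$ in $S$ forces $x_i = x_i \join_L x_j = x_j$, and disjointness follows.

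I do not anticipate any serious obstacle: the argument is essentially formal, and the only nontrivial input is the compatibility of $l$ with joins on pairs whose meet has rank at least $k$, which is precisely what prevents two distinct maximal elements $x_i,x_j$ from sharing a common lower bound in $L_{\ge k}$.
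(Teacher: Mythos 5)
Your proof is correct and follows essentially the same route as the paper's: the union decomposition is immediate from monotonicity and finiteness, and disjointness comes from the observation that a common lower bound $y$ of $x_i, x_j$ in $L_{\ge k}$ forces $\rk_L(x_i \meet x_j) \ge k$, so that property (2) of the truncation map puts $x_i \join x_j$ back in the order ideal and contradicts maximality. The paper phrases this contrapositively, concluding $\rk_L(x_i \meet x_j) < k$ directly, but the content is identical.
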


\begin{proof}
%Evidently, $\rk_L(x_i)\ge k$ for all $i$.
By the definition of $x_1,\dots,x_m$, we have $l(x)\le_\Lambda\tilde x$ if and only if $x\le_L x_i$ for some $1 \le i \le m$.
On the other hand, by the second property of $l$ and the maximality of the elements $x_i$,
we necessarily have $\rk_L(x_i \meet x_j) < k$ for $i \ne j$.
Therefore, the sets $L_{\le x_i}\cap L_{\ge k}$, $i=1,\dots,m$, are disjoint.
\end{proof}

\section{Minimal complexes graded by geometric lattices} \label{variant}

%compatible gradings
In this section we define the main object of this paper.
Given a geometric lattice $L$ and some auxiliary data we construct a complex graded by $L$.
These complexes, which encompass several previously known constructions,
are modeled after the relation complexes of \cite{MR1280576} and the minimal complexes of \cite[Ch.~15]{MR1299527}.
In \S \ref{reflarrangements}, we will study the exactness of these minimal complexes by considering so-called $L$-homotopies and apply our
constructions to the relation complexes of complex reflection arrangements and their restrictions.
Throughout, let $L$ be a geometric lattice of rank $n$.

\subsection{Review of the Orlik-Solomon algebra} \label{reviewos}
We begin by reviewing the construction and main properties of the Orlik-Solomon algebra
of a geometric lattice (see \cite[Ch.~3]{MR1217488} and \cite{MR1859708} for more details).\footnote{While the results of \cite[\S 3.1]{MR1217488} quoted in the following are stated
for the intersection lattices of (central) hyperplane arrangements, the arguments are purely combinatorial and work for general geometric lattices.}
Let $M$ be the free Abelian group with basis elements $e_a$ indexed by the atoms $a$ of $L$, and
consider the exterior algebra $\extalg$ of $M$.
It is a graded ring which is free of rank $2^{\card{\atoms(L)}}$ as a $\Z$-module.
More precisely, a $\Z$-basis of its degree $k$ part is provided by the products $e_{a_1}\dots e_{a_k}$, where
$a_1,\dots,a_k$ are pairwise different elements of $\atoms(L)$. (For $k > 1$ these basis elements are only defined up to sign.)
The algebra $\extalg$ admits a unique graded derivation $\der$, i.e. a
linear map $\extalg \to \extalg$ with $\der (uv) = (\der u) v + (-1)^k u (\der v)$
for all $u \in \extalg_k$ and $v \in \extalg$, satisfying
$\der e_a=1$ for all $a\in\atoms$ \cite[Lemma 3.10]{MR1217488}. The derivation $\der$ is homogeneous of degree $-1$.
Let $\ideal$ be the ideal of $\extalg$ generated by the expressions $\der(e_{a_1}\dots e_{a_k})$, where
$a_1,\dots,a_k$ are dependent elements of $\atoms (L)$.
One shows [ibid., Lemma 3.15] that $\ideal=\jdeal+\der\jdeal$, where $\jdeal$ is the submodule
of $\extalg$ generated by the products $e_{a_1}\dots e_{a_k}$ for dependent $a_1,\dots,a_k \in \atoms (L)$.
In particular, $\ideal$ is a homogeneous ideal of $\extalg$ and $\der\ideal \subset \ideal$.
By definition, the Orlik-Solomon algebra $\OS(L)$ is the quotient $\extalg/\ideal$.
By the above, the derivation $\der$ induces a derivation on $\OS(L)$ which we still denote by $\der$ (cf. [ibid., Definition 3.12]). Therefore,
$(\OS(L),\der)$ is a differential graded $\Z$-algebra.

The algebra $\OS(L)$
admits a canonical grading by the lattice $L$. Namely, we can write $\OS(L)=\oplus_{x\in L}\OS(L)_x$,
where $\OS(L)_x$ is the image under the canonical projection $\extalg\rightarrow\OS(L)$
of the submodule of $\extalg$ spanned by the products $e_{a_1}\dots e_{a_k}$
with $\join_ia_i=x$ [ibid., Theorem 3.26].
The $\Z$-modules $\OS (L)_x$ are free [ibid., Corollary 3.44] and
we have $\OS(L)_i=\oplus_{x\in L_i}\OS(L)_x$ for all $i=0,\dots,n$ [ibid., Corollary 3.27].
Moreover, it is clear from the definition that
$\der(\OS(L)_x)\subset\oplus_{y\prec x}\OS(L)_y$ for all $x\in L$.
Note that we have $\OS (L)_0 \simeq \Z$ and $\OS (L)_a \simeq \Z$ for all $a \in \atoms (L)$.

By [ibid., Lemma 3.13], the derivation $\der$ on $\OS (L)$ yields an exact sequence
\begin{equation} \label{eq: exctOS}
0\rightarrow\OS(L)_{\rk L}\rightarrow\dots\rightarrow\OS(L)_1\rightarrow\OS(L)_0\rightarrow0,
\end{equation}
and for any $a\in\atoms(L)$ multiplication by $e_a$ is a contracting homotopy for this
sequence.
%The construction of $\OS(L)$ is functorial in $L$ in the sense made precise in [ibid., \S2].
Furthermore, we have $\OS(L_{\le x})_y=\OS(L)_y$
for all $y\le x$ and therefore $\OS(L_{\le x})=\oplus_{y\le x}\OS(L)_y$ [ibid., Proposition 3.30]. Applying \eqref{eq: exctOS} to $L_{\le x}$, we infer that
the restriction of $\der$ to $\OS(L)_x$ provides an isomorphism between $\OS(L)_x$ and the kernel
of $\der$ on $\oplus_{y\prec x}\OS(L)_y$.
It is this point of view that we are going to generalize below.

\begin{example} Let $n = \rk L = 2$. Then $\OS (L) = \OS (L)_0 \oplus \OS (L)_1 \oplus \OS (L)_2$. Recall that
$\OS (L)_0 \simeq \Z$ and that a $\Z$-basis of $\OS (L)_1$ is given by the generators $e_a$, $a \in \atoms (L)$.
Any two distinct atoms $a_1 \neq a_2$ of $L$ are independent, while any three atoms are dependent.
This implies that for any $b \in \atoms (L)$ a $\Z$-basis of $\OS (L)_2$ is given by the products $e_a e_b$
for $a \in \atoms (L) \setminus \{ b \}$. In particular, the rank of $\OS (L)_2$ is equal to $\card{\atoms(L)} - 1$.
\end{example}

\subsection{The basic construction} \label{sec: basiconstr}
From now on let $R$ be a (not necessarily commutative) ring with $1$.
%We note that $\OS_R(L)=\OS_{\Z}(L)\otimes R$.

\begin{definition} \label{compatible}
A \emph{compatible $L$-grading} on a chain complex
\begin{equation} \label{cc}
%\cmplx:\ \ \
V_{n+1}=0\rightarrow V_n\rightarrow\dots \rightarrow V_i\xrightarrow{\der_i}V_{i-1}\rightarrow\dots\rightarrow V_0
\end{equation}
of $R$-modules is given by submodules $V_x$ of $V_{\rk (x)}$, $x\in L$, such that
\[
V_i=\dsum_{\rk(x)=i}V_x,\ \ \ i=0,\dots,n,
\]
and
\[
\der_{\rk(x)} (V_x)\subset V_{\prec x}:=\dsum_{y\prec x}V_y,\ \ \ x\in L \setminus \{ 0 \}.
\]
\end{definition}

Note that the notation $V_0$ is unambiguous and that $V_n=V_{x_{\rm max}}$ where $x_{\rm max}$ is the maximal element of $L$.
By abuse of language, we sometimes refer to $V=\dsum_{x\in L}V_x$ itself as the graded complex.
For any $x\in L$ we write $\der_x:V_x\rightarrow V_{\prec x}$ for the restriction of $\der_{\rk(x)}$ to $V_x$.
Recall that $V$ is called \emph{acyclic} if $\Ker\der_i=\Im\der_{i+1}$, $i=1,\dots,n$.
(We do not require that $\der_1$ is onto.)
If $V=\dsum_{x\in L}V_x$ is a chain complex with a compatible $L$-grading, then for any $x\in L$
the subcomplex $V_{\le x}:=\dsum_{y\le x}V_y$ inherits a compatible grading by $L_{\le x}$.

The following two definitions are central for this paper.

\begin{definition}
An \emph{atomic datum} $\data$ (over $R$) with respect to $L$ consists of an $R$-module $U_0$
together with submodules $U_a$ for each atom $a$ of $L$.
We set $U^L:=\sum_{a\in\atoms(L)}U_a$ and $U^x:=\sum_{a\le x}U_a$ for $x\in L$.
An atomic datum is called \emph{essential} if $U_0=U^L$.
It is called \emph{non-degenerate} if $U_a\ne0$ for all $a\in\atoms$.
\end{definition}

%There is an obvious notion of direct sum of atomic data.
Given $\data$, we associate to any $x\in L$
an atomic datum with respect to $L_{\le x}$ by setting
$\data_{\le x} := (U_0,(U_a)_{a\in\atoms_{\le x}})$.

%We can now define one of the central objects of this paper.
\begin{definition} \label{defminimalcomplex}
Given an atomic datum $\data=(U_0,(U_a)_{a\in\atoms(L)})$, there exists a unique chain complex
$V=\dsum_{x\in L}V_x$ with compatible $L$-grading that satisfies the following properties:
\begin{enumerate}
\item $V_0=U_0$,
\item for all $a\in\atoms(L)$ we have $V_a=U_a$ and $\der_a$ is the inclusion
$U_a\hookrightarrow U_0$,
\item for all $i>1$ and $x\in L_i$ we have
\begin{equation} \label{defmincmplx}
V_x=\Ker\der_{\prec x},
\end{equation}
where $\der_{\prec x}:=\der_{i-1}\rest_{V_{\prec x}}$,
and $\der_x$ is the inclusion $V_x\hookrightarrow V_{\prec x}$.
\end{enumerate}
We call $V$ the \emph{minimal complex}
of the atomic datum $\data$, and denote it by $\Orlik(\data)=\Orlik(\data,L)$.
\end{definition}

Note that in a minimal complex the maps $\der_x$ are injective for all $x\in L$, and that in particular $\der_n$ is injective.
Also, observe that for any $x\in L$ we have $\Orlik(\data,L)_{\le x}=\Orlik(\data_{\le x},L_{\le x})$.
Therefore, $\Orlik(\data,L)_x$ depends only on $\data_{\le x}$.

We remark that the notion of a minimal complex makes sense in any abelian category. In particular, if we consider
an atomic datum $\data$ of graded vector spaces or of graded modules over a polynomial ring, then the associated minimal complex $\Orlik(\data)$ inherits the
grading of $\data$ and can therefore be regarded as a bigraded complex.

Our motivating problem in the following is to provide sufficient conditions for the acyclicity of $\Orlik(\data)$.
A simple observation is that for $n = \rk L \le 2$ the complex $\Orlik(\data,L)$ is always acyclic.

\begin{remark} \label{remuniq}
Suppose that a complex \eqref{cc} admits a compatible
$L$-grading and that $\der_a$ is injective for each $a\in\atoms(L)$.
Assume in addition that for all $x\in L$ the subcomplex $V_{\le x}$ is acyclic.
Then by the uniqueness of the minimal complex it follows that
$V$ is isomorphic to $\Orlik(V_0,(V_a)_{a \in \atoms(L)})$ as an $L$-graded complex.
\end{remark}

\begin{definition}
We say that a chain complex $V$ with compatible $L$-grading is \emph{supported on indecomposables} if $V_x=0$ for all
$x \in L_{\dec}$.

An atomic datum $\data=(U_0,(U_a)_{a\in\atoms(L)})$ is called \emph{\disj} if
$U_a\cap U_b=\{0\}$ for all atoms $a,b \in \atoms (L)$ for which $a\join b$ is decomposable.
\end{definition}

\begin{lemma} \label{irred}
Let $\data=(U_0,(U_a)_{a\in\atoms(L)})$ be an atomic datum.
Then $\Orlik(\data,L)$ is supported on indecomposables if and only if $\data$ is \disj.
\end{lemma}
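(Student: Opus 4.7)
The plan is to handle the two implications separately; the reverse direction reduces to a single rank-$2$ computation, while the forward direction requires induction on $\rk L$.

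For the ``only if'' direction, given atoms $a, b \in \atoms(L)$ with $x = a \join b$ decomposable, I would use that $\atoms_{\le x} = \{a, b\}$ to compute $V_x$ directly: the restriction $\der_{\prec x} \colon U_a \oplus U_b \to U_0$ is the sum map $(u_a, u_b) \mapsto u_a + u_b$, whose kernel is isomorphic to $U_a \cap U_b$ via $u \mapsto (u, -u)$. If $V_x = 0$ for every decomposable $x$, this forces $U_a \cap U_b = 0$ for every such pair of atoms, i.e., $\data$ is \disj.

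For the converse, I would first use the localization identity $\Orlik(\data, L)_x = \Orlik(\data_{\le x}, L_{\le x})_x$, together with the fact that $\data_{\le x}$ inherits the \disj\ property, to reduce the claim to showing $V_{\hat 1} = 0$ whenever $L$ has decomposable maximal element $\hat 1$. This is proved by induction on $n = \rk L$, with the rank-$2$ computation above serving as base case. For the inductive step, I would write $\hat 1 = y_1 \disjoin y_2$ with $y_1, y_2 \ne 0$ and use the product structure $L_{\le \hat 1} \cong L_{\le y_1} \times L_{\le y_2}$ to list the covers $y \prec \hat 1$ as either $z_1 \disjoin y_2$ with $z_1 \prec_{L_{\le y_1}} y_1$, or $y_1 \disjoin z_2$ with $z_2 \prec_{L_{\le y_2}} y_2$. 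Such a $y$ fails to be decomposable only when the reduced factor equals $0$ \emph{and} the surviving factor is itself indecomposable. When this situation does not occur (for instance when $\rk y_1, \rk y_2 \ge 2$, or when the surviving factor is itself decomposable), every $y \prec \hat 1$ is decomposable, so applying the inductive hypothesis inside $L_{\le y}$ (of rank $n - 1 < n$) gives $V_y = 0$, hence $V_{\prec \hat 1} = 0$ and trivially $V_{\hat 1} = 0$.

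The delicate situation, which I expect to be the main obstacle, is the remaining case $\hat 1 = a \disjoin y_2$ with $a$ an atom and $y_2$ indecomposable of rank $n - 1$. All other covers $a \disjoin z_2$ (with $0 \ne z_2 \prec y_2$) are decomposable and yield $V_{a \disjoin z_2} = 0$ by induction, so that $V_{\prec \hat 1} = V_{y_2}$. The key point is that by the very definition of the minimal complex, $\der_{y_2} \colon V_{y_2} \to V_{\prec y_2}$ is the inclusion, and $V_{\prec y_2}$ is in turn a direct summand of $V_{n-2}$; hence the restriction of $\der_{\prec \hat 1}$ to $V_{y_2}$ is essentially this inclusion, which is injective. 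Therefore $V_{\hat 1} = \Ker \der_{\prec \hat 1} = 0$, closing the induction.
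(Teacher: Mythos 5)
Your proof is correct and takes essentially the same approach as the paper: the rank-$2$ kernel computation $V_{a\disjoin b}\simeq U_a\cap U_b$ for the ``only if'' direction, and for the converse an induction on rank that isolates the case where the decomposable element $x$ has an atom factor and an indecomposable cofactor, closing that case by the injectivity of $\der_{y_2}$ built into the minimal complex. Your extra case distinction (atom factor with decomposable cofactor) is subsumed in the paper's argument by applying the injectivity of $\der_y$ uniformly, but the substance is identical.
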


\begin{proof}
Note that if $x=a\disjoin b$, $a,b\in\atoms(L)$, then $\Orlik(\data)_x\simeq U_a\cap U_b$.
The ``only if'' part is therefore clear.
To prove sufficiency, suppose that $\data$ is \disj.
We will show by induction on $m=\rk(x)$ that $\Orlik(\data,L)_x=0$ for all $x \in L_{\dec}$.
The base of the induction is the case $m=2$ which is our assumption.
For the induction step, suppose that $m>2$ and $x=y\disjoin z$.
Then $L_{\prec x}=(y\disjoin L_{\prec z})\sqcup(L_{\prec y}\disjoin z)$.
Since $m>2$, $y$ and $z$ cannot both be atoms.
If neither $y$ nor $z$ is an atom then $L_{\prec x}\subset L_{\dec}$ and therefore, by induction hypothesis
$\Orlik(\data)_{\prec x}=0$, and hence $\Orlik(\data)_x=0$.
Otherwise, by interchanging $y$ and $z$ if necessary, we may assume that $z\in\atoms(L)$ but $y\notin\atoms(L)$.
Then $L_{\prec x}= \{y\} \sqcup (L_{\prec y}\disjoin z)\subset\{y\}\cup L_{\dec}$ and
by the induction hypothesis $\Orlik(\data)_{\prec x}=\Orlik(\data)_y$.
Therefore $\Orlik(\data)_x=0$, since $\der_y$ is injective.
\end{proof}

\subsection{Examples} \label{exmplatomic}
\subsubsection{} \label{constantcoefficients}
The most basic example is given by setting $U_a=U_0$ for all $a \in \atoms(L)$.
We call this case the \emph{atomic datum with constant coefficients}.
%If $R=U_0=\Z$, then
From the properties of the Orlik-Solomon algebra $\OS (L)$ summarized in \S \ref{reviewos}
it follows that the $L$-graded chain complex $\OS (L) \otimes U_0$ is exact.
Remark \ref{remuniq} implies then that we have
$\Orlik(\data,L)=\OS(L)\otimes U_0$ as $L$-graded chain complexes.
(The algebra structure of $\OS(L)$ will be considered in \S\ref{algstr} below.)

\subsubsection{}
For the remaining three examples we assume that $L$ is the intersection lattice
of a hyperplane arrangement $\arr$ in a vector space $U^*$ over a field $K$.
Recall that the atoms of $L$ correspond to the elements of $\arr$. We call
$\data = (U, (H^\perp)_{H\in\arr})$ the \emph{defining atomic datum} for $\arr$. It is obviously orthogonal.
The resulting minimal complex $V$ is called the \emph{relation complex} of $\arr$.
Note that for $x \in L$ the atomic datum $\data_{\le x}$ is just the defining atomic datum for $\arr_{\le x}$.

% The lattice $L$ can be identified with the set of subspaces of $W^*$ spanned by collections of the
% one-dimensional spaces $H_a^\perp$, $a \in \atoms$.
The graded pieces of the relation complex are the relation spaces considered in \cite{MR1280576}.
(More precisely, in the
terminology of [ibid., Definition 4.1, 4.3], the graded piece $V_x$ associated to $x \in L$
is $R_{\rk (x)} (\arr_{\le x})$, and the space $R_k (\arr)$ is the kernel of $\der_{k-1}$ on $V_{k-1}$.)
A hyperplane arrangement is called
\emph{$k$-formal} (cf.~[ibid., Definition 4.4]), if the first $k-1$ homology
groups of the complex \eqref{cc} vanish, i.e.~if $\Ker\der_i=\Im\der_{i+1}$ for
$i=1,\dots,k-1$. For example, $2$-formality means that the
linear dependencies among the linear functionals $u_H$, $H\in\arr$,
are generated by those induced by the rank two
elements of the intersection lattice
(or, what amounts to the same, by the linear dependencies among triples of functionals $u_H$);
$3$-formality means that in addition, the relations among the
relations are generated by those induced by rank three elements, and so forth.
$n$-formality (for $n = \rk L$) means that \eqref{cc} is acyclic.

\begin{example} \label{examplerank2}
In the case $n = 2$ we can write the relation complex $V$ as
\[
0 \longrightarrow V_2 \longrightarrow \bigoplus_H K u_H \longrightarrow U,
\]
where $V_2$ is the vector space of all linear dependencies among the $u_H$, i.e.
$V_2 = \{ (\lambda_H)_H \in K^{\card{\arr}}\, : \, \sum_H \lambda_H u_H = 0 \}$. The space $V_2$ has dimension $\card{\arr} - 2$.
\end{example}

\begin{example} \label{examplea3}
The six vectors $(1,0,0)$, $(0,1,0)$, $(0,0,1)$, $(1,1,0)$, $(0,1,1)$ and
$(1,1,1)$ in the space $U = K^3$ define an essential hyperplane arrangement $\arr$ in the space $U^*$, which
we again identify with $K^3$, as usual. The intersection lattice $L$ of the arrangement $\arr$
contains four elements of rank two, namely the lines in $U^*$ spanned by the vectors $(1,0,0)$, $(0,0,1)$, $(1,-1,0)$ and $(0,1,-1)$.
Each of these lines is contained in precisely three hyperplanes of $\arr$ and contributes therefore a one-dimensional piece to the relation complex.
The reader might verify that all linear dependencies among the defining vectors of $\arr$ are linear combinations of the four linear dependencies
among triples.
Therefore the relation complex of $\arr$ is exact. The piece in degree three is one-dimensional, since the space of all linear dependencies
has dimension $6-3=3$, while the rank two piece of the relation complex has dimension four.
(We note that the arrangement $\arr$ is in fact a reflection arrangement of type $A_3$, cf. \S \ref{reflarr} below.)
\end{example}

Other examples (including a generalization of Example \ref{examplea3}) will be discussed in the appendix.

\subsubsection{}
\label{BT} There is a variant of the previous construction.
Let $\Sym U^*$ be the symmetric algebra of $U^*$, canonically isomorphic to the algebra
of polynomials on $U$. We may form the orthogonal atomic datum of graded $K$-vector spaces
$(\Sym U^* \otimes U,(\Sym H\otimes H^\perp)_{H\in\arr})$, where the grading is inherited from $\Sym U^*$.
The minimal complex $W = \Orlik(\Sym U^* \otimes U,(\Sym H\otimes H^\perp)_{H\in\arr})$
is bigraded by the lattice $L$ and the non-negative integers, and
its degree $0$ part with respect to the second grading is just the relation complex $V$ considered above.

The complex $W$ is closely related to the graded $\Sym U$-modules $D_i(\arr)$ defined in \cite[Definition 4.5, 4.6]{MR1280576}.
Namely, the $K$-vector space $W_x$ is the graded dual of the graded $\Sym U$-module $D_{\rk (x)} (\arr_{\le x})$ and for $k \ge 1$
the kernel of $\der_{k-1}$ on $W_{k-1}$ is the graded dual of $D_k (\arr)$.

In particular,
the $\Sym U$-module of $\arr$-derivations
\[
D (\arr) = \{ \theta \in \Sym U \otimes U^* \, : \, \theta (u_H) \in (\Sym U) u_H \text{ for all } H \in \arr \},
\]
where we write $\theta (u)$ for the image of a pair $(\theta,u)$ under the natural bilinear map
$(\Sym U \otimes U^*) \times U \to \Sym U$ (cf. \cite[Proposition 4.8]{MR1217488}),
can be recovered from the map $\der_1: W_1 \to W_0$. Namely, $D (\arr)$ is the annihilator in $\Sym U \otimes U^*$ of the image of $\der_1$ inside
$W_0 = \Sym U^* \otimes U$.
Recall that a hyperplane arrangement $\arr$ is called \emph{free} if $D (\arr)$ is a free $\Sym U$-module.
This notion is well studied (cf. \cite{MR1280576} and \cite[Ch. 4]{MR1217488}).
The main result of \cite{MR1280576} is that if $\arr$ is a free arrangement then the complex $W$ (and therefore also
the relation complex of $\arr$) is acyclic [ibid., Proposition 4.13 (iii)].

\subsubsection{} \label{defideals}
For a subspace $X$ of $U$ we use the notation
\begin{equation} \label{def: gen}
\gen{X}{U}:=\text{the ideal of $\Sym U$ generated by }X.
\end{equation}
Equivalently, $\gen{X}{U}$ is the ideal of all polynomials on the vector space $U^*$ vanishing on the annihilator of $X$ in $U^*$.
Then another variant of the relation complex (which is not orthogonal) arises from the
atomic datum $(\Sym U,(\gen{H^\perp}{U})_{H\in\arr})$ of graded $\Sym U$-modules.
We call this datum the \emph{defining ideals} of $\arr$. We will study the exactness of the associated minimal complex in \S \ref{algstr}.
%
%\marginpar{More examples? Maybe something directly related to freeness?}

% We can construct new atomic data from old ones by linear algebra.
% For example if $R=k$ is a field and $\data=(U_0,(U_a))$,
% consider $\Sym(\data)=(I_0,(I_a))$ where $I_0$ is the homogeneous maximal
% ideal of $S:=\Sym(U_0)$ and $I_a$ is the ideal in $S$ generated by $U_a$.
% Then $\Sym(\data)$ are atomic data over $S$.

% We can also restrict atomic data $\data=(U_0,(U_a))$ to $L_{\le x}$
% for any $x\in L$ by letting
% $\data_{\le x}=(U_0,(U_a)_{a\le x})$. Clearly,
% \[
% \Orlik(\data_{\le x},L_{\le x})=\Orlik(\data,L)_{\le x}.
% \]

% Given a line configuration $(U_0,(U_a))$ and $x\in L$ we can form the
% projected line configuration in $U_0/x$ whose atoms are $\proj_x(U_a)$,
% $a\in\atoms(L)$, $a\not\le x$
% where $\proj_x:U_0\rightarrow U_0/x$ is the projection. Its lattice is
% $L_{\ge x}$.

% Finally, given atomic data $\data_i$, $i=1,2$, with respect to geometric lattices
% $L_i$, $i=1,2$, we can form the product data $\data_1\times\data_2$ with respect to $L_1\times L_2$.
% Clearly
% \[
% \Orlik(\data_1\times\data_2,L_1\times L_2)=\Orlik(\data_1,L_1)\dsum\Orlik(\data_2,L_2).
% \]

% Other examples will be considered below.

\section{$L$-homotopies and \admis\ complexes} \label{reflarrangements}

\subsection{$L$-homotopies and \admiss\ sections}
In order to study the acyclicity of a minimal complex $\Orlik(\data, L)$ it will be useful to introduce a stronger notion.

\begin{definition} \label{defadmissible}
Let
\[
0\rightarrow V_n\rightarrow\dots \rightarrow V_i\xrightarrow{\der_i}V_{i-1}\rightarrow\dots\rightarrow V_0
\]
be a chain complex with compatible $L$-grading.
An \emph{$L$-homotopy} for $V$ is a contracting homotopy $d$ for $\der$,
i.e.~a sequence of morphisms $d_0:\Im\der_1\rightarrow V_1$ and $d_i:V_i\rightarrow V_{i+1}$, $i=1,\dots,n$,
satisfying
\[
d_{i-1}\der_i+\der_{i+1}d_i=\id_{V_i}, \quad i=1,\dots,n,
\]
with the additional property that
\begin{equation} \label{eqncompatibled}
d_{\rk(x)} (V_x)\subset V_{\succ x} := \dsum_{y\succ x} V_y, \quad
x\in L \setminus \{ 0 \}.
\end{equation}

We say that $V$ is \emph{\admis}, if for any $x\in L$ the complex $V_{\le x}$ admits an $L$-homotopy.
Analogously, we say that an atomic datum $\data$ is \admis\ if the minimal complex $\Orlik(\data,L)$ is \admis.
\end{definition}

For a sequence $d$ of morphisms satisfying \eqref{eqncompatibled}
and $x\in L$ we write $d_x:V_x\rightarrow V_{\succ x}$
for the restriction of $d_{\rk(x)}$ to $V_x$. Moreover,
for any $y\succ x$ we write $d_{x;y}:V_x\rightarrow V_y$ for the $y$-component of $d_x$.
Naturally, $d$ is determined by the maps $d_{x;y}$, $x\prec y\in L$.

Observe that for $\rk L \le 2$ and $R$ a field, every atomic datum over $R$ is \admis, since
in this case one may take
for $d_0$ an arbitrary section of $\der_1: \dsum_{a\in \atoms(L)}U_a \to U^L$.

\begin{remark}
The simplest examples of $L$-homotopies are the standard contracting homotopies for the Orlik-Solomon algebra $\OS(L)$,
given by multiplying by the canonical generator $e_a$ for an atom $a\in\atoms(L)$ (cf. \S \ref{reviewos}).
\end{remark}

%It is also clear that the direct sum of \admis\ atomic data is \admis.

We now describe a way to construct $L$-homotopies. The key point is to
perform the construction inductively for all complexes $V_{\le x}$, $x \in L$, at the same time.
The necessary base of the induction is given by the following notion.

%This construction will be only applicable to minimal complexes supported
%on indecomposables. In particular, it does not apply to the standard Orlik-Solomon algebra.

%Henceforth $R$ will be assumed to be a field $k$ of characteristic $0$
%or more generally an algebra over $\mathbb{Q}$.

\begin{definition} \label{admsection}
Let $\data=(U_0,(U_a)_{a\in\atoms})$ be an atomic datum.
A morphism $d_0:U^L\rightarrow\dsum_{a\in\atoms}U_a$ is called
a \emph{\admiss\ section} for $\data$ if the following two conditions are
satisfied:
\begin{enumerate}
\item For all $x\in L_{\irr}$
there exists an integer $h^x$, which is invertible in $R$, such that the diagram
\begin{equation} \label{cox}
\begin{CD}
U^x @>{h^x\id_{U^x}}>> U^x \\
@V{d_0}VV @AA{\der_1}A\\
{\dsum_{a\in\atoms}U_a} @>{\pi_{\le x}}>>  {\dsum_{a\le x}U_a}
\end{CD}
\end{equation}
commutes, where $\pi_{\le x}$ is the projection on the coordinates $\atoms_{\le x}$.
%\[
%U^x\xrightarrow{d_0}\dsum_{a\in\atoms}U_a\xrightarrow{\pi_{\le x}}\dsum_{a\le x}U_a
%\xrightarrow\sum U^x
%\]
%is $h^x$ times the identity map, where $\pi_{\le x}$ is the projection.
\item If $a,a'\in\atoms$ and $a\join a'$ is decomposable, then the $a'$-component of
$d_0\rest_{U_a}$ vanishes. In other words
\[
d_0(U_a)\subset\oplus_{a'\in\atoms(L):a\join a'\in L_{\irr}}U_{a'}.
\]
\end{enumerate}
\end{definition}

Note that these conditions imply that $\data$ is \disj. (To see this, combine the second condition with the first condition for $x = a$ and $x = a'$.)
We call the integers $h^x$, $x\in L_{\irr}$, the \emph{Coxeter numbers} of $d_0$.
(The terminology is motivated by the case of relation complexes
of Coxeter arrangements studied in \S \ref{reflarr} below, cf.~the comment after Proposition \ref{Coxprop}.)

%If $L$ is indecomposable, we write $h^L$ for the Coxeter number of the maximal element of $L$.
Suppose that $d_0$ is a \admiss\ section for the atomic datum $\data=(U_0,(U_a))$
with Coxeter numbers $h^x$, $x\in L_{\irr}$. For $x\in L$
denote by $d_0^x$ the restriction of $\pi_{\le x}\circ d_0$ to $U^x$.
Then $d_0^x$ is a \admiss\ section for the atomic datum $\data_{\le x}$
with respect to $L_{\le x}$ with Coxeter numbers $h^y$, $y\in L_{\le x,\irr}$.
We also write $d_{0;a}:U^L\rightarrow U_a$ for the $a$-coordinate of $d_0$, so that
$d_0^x=\dsum_{a\in\atoms_{\le x}}d_{0;a}\big|_{U^x}$.
%Note that for all $0\ne x\in L_{\irr}$ and $y\succ x$ decomposable
%$d_0^y\rest_{U^x}=d_0^x$

It is worthwhile to point out the following curious combinatorial relations among the Coxeter numbers.
\begin{lemma}
Suppose that $d_0$ is a \admiss\ section for a non-degenerate atomic datum
$(U_0,(U_a))$ with Coxeter numbers $h^x$, $x\in L_{\irr}$.
Then for any $x,z\in L_{\irr}$ with $x\le z$ we have
\[
h^z-h^x \equiv \sum_{y\in L_{\irr}:x\prec y\le z}(h^y-h^x)\pmod{n_x},
\]
where $n_x$ is the minimal positive integer such that $n_x U^x=0$, if such an integer exists, and $n_x = 0$ otherwise.
\end{lemma}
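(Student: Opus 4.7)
The plan is to unfold the defining relation of the Coxeter numbers coming from diagram \eqref{cox} and exploit the orthogonality condition (2) of Definition \ref{admsection}.

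First, I would rewrite the diagram as the identity
\[
\sum_{a\le w}d_{0;a}(u)=h^w\cdot u\quad\text{for all }w\in L_{\irr}\text{ and }u\in U^w.
\]
Fixing $u\in U^x\subset U^z$ and applying this for both $w=z$ and $w=x$, subtraction yields
\[
\sum_{a\in\atoms(L),\,a\le z,\,a\not\le x}d_{0;a}(u)=(h^z-h^x)\,u.
\]
The strategy is to partition the atoms appearing in this sum according to $y:=x\join a\in L_{\le z}$. If $a\le z$ is an atom with $a\not\le x$, then $a\meet x=0$, so semimodularity forces $\rk(x\join a)=\rk(x)+1$, i.e.\ $x\prec y$; conversely, each $y$ with $x\prec y\le z$ groups together the atoms $a\le y$ with $a\not\le x$.

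The main case analysis is then for each such $y$:
\begin{itemize}
\item If $y\in L_{\irr}$, subtracting the defining identity for $w=y$ from that for $w=x$ (both applied to the same $u\in U^x\subset U^y$) gives $\sum_{a\le y,\,a\not\le x}d_{0;a}(u)=(h^y-h^x)u$.
\item If $y\in L_{\dec}$, write $y=y_1\disjoin y_2$. Since $x\in L_{\irr}$ and $x\le y$, the factorization $L_{\le y}\simeq L_{\le y_1}\times L_{\le y_2}$ forces $x$ to lie entirely in one factor, say $x\le y_1$. Comparing ranks with $x\prec y$ yields $y_1=x$ and $\rk y_2=1$, so $y=x\disjoin a_0$ for a unique atom $a_0$, and $a_0$ is in fact the only atom $\le y$ not below $x$.
\end{itemize}
The key point in the decomposable case is that $d_{0;a_0}(u)$ vanishes for every $u\in U^x$: decomposing $u=\sum_{b\in\atoms_{\le x}}u_b$ with $u_b\in U_b$, the fact that $L_{\le y}$ splits implies $a_0\join b=a_0\disjoin b\in L_{\dec}$ for every $b\in\atoms_{\le x}$, so condition (2) of Definition \ref{admsection} kills each $d_{0;a_0}(u_b)$.

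Summing the contributions over all $y$ with $x\prec y\le z$, only the indecomposable $y$ survive, giving
\[
(h^z-h^x)\,u=\sum_{y\in L_{\irr}:\,x\prec y\le z}(h^y-h^x)\,u\quad\text{for every }u\in U^x.
\]
Hence the integer $(h^z-h^x)-\sum_{y\in L_{\irr}:\,x\prec y\le z}(h^y-h^x)$ annihilates all of $U^x$, and by non-degeneracy $U^x\supset U_a\neq 0$ for any atom $a\le x$, so this integer is divisible by $n_x$ by definition. The main obstacle is the rank-theoretic argument that, under the indecomposability of $x$, any decomposable cover $y$ of $x$ inside $L_{\le z}$ has the explicit form $x\disjoin a_0$; once that is clear, the orthogonality hypothesis on $d_0$ does the rest.
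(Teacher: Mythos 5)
Your proof is correct and follows essentially the same route as the paper: partition the atoms $a\le z$ with $a\not\le x$ by $y=x\join a$ (so $x\prec y\le z$), show that each decomposable cover has the form $y=x\disjoin a_0$ and that the contribution of the unique atom $a_0$ vanishes on $U^x$ by condition (2) of Definition \ref{admsection}, and then apply $\der_1$ to the resulting identity. The only cosmetic difference is that the paper first passes to $d_0^z$ to reduce to the case where $z$ is the maximal element, whereas you work directly with the atoms below $z$.
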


\begin{proof}
By considering $d_0^z$, we can assume that $z$ is the maximal element and that $L$ is indecomposable.
Let $x\in L_{\irr}$. The map $a\mapsto a\join x$ gives a partition
\[
\{a\in\atoms:a\not\le x\}=\coprod_{y\succ x}\{a\in\atoms:a\le y,a\not\le x\}.
\]
We therefore have $\id-\pi_{\le x}=\sum_{y\succ x}(\pi_{\le y}-\pi_{\le x})$ on $\dsum_{a\in\atoms}U_a$,
and hence
\[
d_0-d_0^x=\sum_{y\succ x}(d_0^y-d_0^x)
\]
on $U^x$. Note that if $x\prec y\in L_{\dec}$, then $y=x\disjoin a$
for some $a\in\atoms$. By the second condition of Definition \ref{admsection}, for any $a'\in\atoms_{\le x}$
we have $\pi_{\le y}\circ d_0=\pi_{\le x}\circ d_0$ on $U^{a'}$, and therefore
$d_0^y=d_0^x$ on $U^x$. Thus, we infer that
\[
d_0-d_0^x=\sum_{x\prec y\in L_{\irr}}(d_0^y-d_0^x)
\]
on $U^x$. Applying $\der_1$ on both sides we get the required equality.
\end{proof}

\begin{proposition} \label{hmtpycnstrct}
Let $\data=(U_0,(U_a))$ be an \disj\ atomic datum for a geometric lattice $L$
and let $d_0$ be a \admiss\ section for $\data$ with Coxeter numbers $h^x$, $x\in L_{\irr}$.
Let $V=\Orlik(\data,L)$.
Then there exist unique morphisms $d_{x;y}:V_x\rightarrow V_y$, $0\ne x\prec y\in L_{\irr}$, satisfying
the following property. Let
\[
d_i^z=\bigoplus_{\substack{x\in L_{\le z,i},\\ y\in L_{\le z,\irr}:\\ x\prec y}}
d_{x;y}:V_{i,\le z}\rightarrow V_{i+1,\le z},\quad i>0,
\]
and $d_0^z = \pi_{\le z}\circ d_0 |_{U^z}: U^z \rightarrow V_{1,\le z}$ as above. Then
\begin{equation} \label{indrltn}
\der_y\circ d_{x;y}=h^y\id_{V_x}-d_{\rk(x)-1}^y\circ\der_x\text{ for all pairs  }0\ne x\prec y\in L_{\irr},
\end{equation}
and for any $z\in L_{\irr}$,
$\frac1{h^z}d^z$ is an $L$-homotopy for $V_{\le z}=\Orlik(\data_{\le z},L_{\le z})$.
\end{proposition}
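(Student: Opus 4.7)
The plan is a joint induction on $i=\rk(x)$ in which we simultaneously define the morphisms $d_{x;y}$ for $x\in L_i$ and $y\succ x$ in $L_{\irr}$, and verify, for every $z\in L_{\irr}$ with $\rk z>i$, the $L$-homotopy identity
\begin{equation*}
d^z_{i-1}\der_i+\der_{i+1}d^z_i=h^z\id_{V_{i,\le z}}.
\end{equation*}
Uniqueness of $d_{x;y}$ is immediate from \eqref{indrltn}: the map $\der_y:V_y\hookrightarrow V_{\prec y}$ is the injective inclusion, and the right-hand side of \eqref{indrltn} involves only $d_{x';y'}$ with $\rk x'=i-1$, which are fixed at the previous stage.

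For existence I set $d_{x;y}(v):=h^yv-d^y_{i-1}(\der_xv)$ viewed in $V_{\prec y}$, and verify that it lies in $V_y=\Ker\der_{\prec y}$. Applying $\der_{\prec y}$ and using $\der_{i-1}\der_xv=\der_{i-1}\der_iv=0$, this reduces to the identity $\der_id^y_{i-1}(w)=h^yw$ for $w=\der_x(v)$. In the base case $i=1$, this is exactly the content of diagram \eqref{cox} applied to $y\in L_{\irr}$, since $v\in U_a\subset U^y$. For $i\ge 2$ it is a consequence of the inductive identity for $V_{\le y}$ at degree $i-1$, applied to the cycle $w$.

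The more delicate half is establishing $(H_i)$. For $v\in V_x$ with $x\in L_{\le z,i}$, set
\begin{equation*}
E(v):=d^z_{i-1}(\der_xv)+\der_{i+1}d^z_i(v)-h^zv\in V_{i,\le z}.
\end{equation*}
Applying $\der_i$ and invoking the inductive identity for $V_{\le z}$ and for each $V_{\le y}$ (with $y$ in the sum defining $d^z_i(v)$) yields $\der_iE(v)=0$, but this alone does not force $E(v)=0$. To conclude, I examine the $V_{x'}$-components of $E(v)$ for $x'\in L_{\le z,i}\cap L_{\irr}$ (those with $x'\in L_{\dec}$ vanishing by Lemma~\ref{irred}). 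The diagonal component $x'=x$ reduces, via the construction relation for $d_{x'';x}$ ($x''\prec x$) and injectivity of $\der_x$ identifying $\sum_{x''\prec x}d_{x'';x}(w_{x''})$ with $h^xv$, to $\bigl[\sum_y(h^y-h^x)-(h^z-h^x)\bigr]v$; this vanishes by the Coxeter number congruence of the preceding lemma together with $n_xV_x=0$. The off-diagonal component at $x'\ne x$ is supported, if at all, on the unique $x''\prec x,x'$ of rank $i-1$ (forcing $\rk(x\meet x')=i-1$) and is controlled by the unique covering $y=x\join x'$; by semimodularity and Lemma~\ref{irrdecom}(2), this $y$ lies in $L_{\irr}$ and is below $z$, giving the exact cancellation of the contribution from $d^z_{i-1}$ against that of the single matching $d^y_{i-1}$.

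The principal obstacle is this final passage from $\der_iE(v)=0$ to $E(v)=0$: in a minimal complex $\Ker\der_i$ is in general nontrivial, so the lattice-theoretic input---that two distinct indecomposables of rank $i$ with a common rank-$(i-1)$ meet join to a unique indecomposable of rank $i+1$---is indispensable. Both the Coxeter number congruence and the orthogonality of $\data$ enter critically at this step.
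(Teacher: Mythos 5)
Your overall architecture is the same as the paper's: a joint induction on $i=\rk(x)$ constructing the $d_{x;y}$ and verifying the homotopy identity simultaneously, with existence/uniqueness derived from the injectivity of $\der_y$ and the inductive version of \eqref{j=i-1}, and with the cancellation of off-diagonal contributions handled by $y=x\join y'$ (using semimodularity, Lemma~\ref{irrdecom}(2), and orthogonality in the borderline rank-one case). Up to here your argument and the paper's are essentially identical.

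The divergence is in your treatment of the diagonal $V_x$-component of $E(v)$, and it is a genuine detour. Once you have shown that every off-diagonal component of $E(v)$ vanishes, you already know $E(v)\in V_x$. You also know $\der_i E(v)=0$. But $\der_i$ restricted to $V_x$ is $\der_x$, which is injective in a minimal complex; hence $E(v)=0$ immediately. That is the paper's route (it phrases this as: the operator preserves each $V_x$, and $\der_x(\text{op}|_{V_x})=h^z\der_x$ then forces $\text{op}|_{V_x}=h^z\id_{V_x}$). Your remark that ``$\der_iE(v)=0$ alone does not force $E(v)=0$'' is of course correct as stated, but you overlook that $\der_i$ \emph{is} injective on a single summand $V_x$, which is exactly what the off-diagonal vanishing makes available. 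Instead you compute the diagonal component explicitly — using the relation $\sum_{x''\prec x}d_{x'';x}\der_{x;x''}=h^x\id_{V_x}$ (which you establish by the very injectivity argument just described, only for $z=x$) — and then invoke the Coxeter-number congruence lemma plus $n_xV_x=0$. That computation is correct, but it costs you something: the congruence lemma as stated in the paper assumes the atomic datum is \emph{non-degenerate}, which Proposition~\ref{hmtpycnstrct} does not assume, and you must additionally verify $n_xV_x=0$ (it does hold, since $V_x$ injects via iterated $\der$ into a sum of copies of $U_a$ with $a\le x$, but this is an extra argument). The paper's injectivity shortcut avoids both issues and makes the congruence lemma, which the paper presents only as a combinatorial curiosity, entirely superfluous to the proof.

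So: correct in substance and in the key cancellation step, but the last paragraph is a roundabout substitute for an argument you already have in hand, and it smuggles in the non-degeneracy hypothesis that the proposition does not require.
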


\begin{proof}
First, recall that $V$ is supported on indecomposables by Lemma \ref{irred}.
The uniqueness of the maps $d_{x;y}$ is clear by induction on $\rk(x)$, since $\der_y$ is injective for all $y$.

We will prove the existence of $d_{x;y}$ and the homotopy property of the ensuing maps $\frac1{h^z}d^z$
by induction on $\rk (x)$. Namely,
suppose that for some $i>0$ there exist maps $d_{x;y}: V_x \rightarrow V_y$ for all pairs
$0\ne x\prec y\in L_{\irr}$, $\rk (x) < i$,
that satisfy \eqref{indrltn} as well as the relations
\begin{equation} \label{indhmtpy}
\der_{j+1}d_j^z+d_{j-1}^z\der_j|_{V_{\le z,j}}=h^z\id_{V_{\le z,j}}
\end{equation}
for all $z\in L_{\irr}$ and $0<j<i$.
We will show that we can define $d_{x;y}$ for all $x \prec y \in L_{\irr}$, $\rk (x) = i$, such that
\eqref{indrltn} is again satisfied and \eqref{indhmtpy} continues to hold for $j=i$.

First note that for $i > 1$ we may apply \eqref{indhmtpy} for $j=i-1$ and compose it with $\der_i$
to obtain
\begin{equation} \label{j=i-1}
\der_id_{i-1}^z\der_i|_{V_{\le z,i}}=(\der_id_{i-1}^z+d_{i-2}^z\der_{i-1})\der_i|_{V_{\le z,i}}=h^z\der_i|_{V_{\le z,i}}
\end{equation}
for all $z\in L_{\irr}$. By \eqref{cox}, this relation is also applicable in the case $i=1$.
In particular, given a pair $x\prec y \in L_{\irr}$ with $\rk (x) = i$, by setting $z=y$ we obtain
\[
\der_{\prec y}\circ d_{i-1}^y\circ\der_x=
\der_i\circ d_{i-1}^y\circ\der_x=h^y\der_x.
\]
In other words,
$h^y\id_{V_x}-d_{i-1}^y\circ\der_x$ maps $V_x$ to $\Ker\der_{\prec y} \subset V_{\prec y}$.
By \eqref{defmincmplx} (applied to $y$), it follows that there exists a map $d_{x;y}:V_x\rightarrow V_y$
satisfying \eqref{indrltn}.

It remains to show that
\begin{equation} \label{indstephmtpy}
\der_{i+1}d_i^z+d_{i-1}^z\der_i|_{V_{\le z,i}}=h^z\id_{V_{\le z,i}},\ \ \ z\in L_{\irr}.
\end{equation}
To that end we claim that the left-hand side of \eqref{indstephmtpy} preserves $V_x$
for any $x\in L_{\le z,i}$. Granted this claim, since $\der_i$ is injective on each $V_x$,
\eqref{indstephmtpy} follows from the relation
\[
\der_i(\der_{i+1}d_i^z+d_{i-1}^z\der_i|_{V_{\le z,i}})=h^z\der_i|_{V_{\le z,i}},
\]
which in turn follows from \eqref{j=i-1}.

To prove the claim, fix $x\in L_{\le z,i}$ and write $\der_x=\dsum_{x'\prec x}\der_{x;x'}$
where $\der_{x;x'}:V_x\rightarrow V_{x'}$.
Then the restriction of the left-hand side of \eqref{indstephmtpy} to $V_x$ is
\begin{multline*}
\sum_{y\in L_{\le z,\irr}: y \succ x}\der_yd_{x;y}+d_{i-1}^z\der_x=
\sum_{y\in L_{\le z,\irr}: y \succ x} (h^y\id-d_{i-1}^y\der_x)+d_{i-1}^z\der_x=\\
\sum_{y\in L_{\irr}:z\ge y\succ x}\big[h^y\id-\sum_{x',y'\in L_{\irr}:x'\prec x,
y\ge y'\succ x'}d_{x';y'}\der_{x;x'}\big]
+\sum_{x',y'\in L_{\irr}:x'\prec x,z\ge y'\succ x'}
d_{x';y'}\der_{x;x'}.
\end{multline*}
The contribution from any pair $y'\succ x'$ with
$y'\ne x$ to the last sum cancels with its contribution to the
middle sum for $y=x\join y'$. Observe here that if $x \join y' \in L_{\dec}$ then
by Lemma \ref{irrdecom} we have $x'= x \meet y' = 0$, i.e.~$i=1$ and $x\in\atoms_{\le z}$, and that in this case
$d_{0;y'}\der_x=0$, since $d_0^{y'}\rest_{U_x}=0$ by the second condition of Definition \ref{admsection}.
Therefore, only the terms with $y'=x$ contribute, and we conclude that
$\der_{i+1}d_i^z+d_{i-1}^z\der_i$ preserves $V_x$, as claimed.
\end{proof}

%\begin{lemma}
%Suppose that a line configuration admits an admissible section.
%Then so does the projection along any $x\in L$.
%\end{lemma}

%\begin{proof}
%Suppose that $d_0$ is an admissible section for the line configuration
%$(U_0,(U_a))$. Define $\tilde d_0$ by $\tilde d_0(u)=\proj
%\end{proof}

%For root configuration we can take
%$d_0^y=\dsum_{\beta}\id-s_\beta$.

For the record, we point out the following variant of Proposition \ref{hmtpycnstrct}, which is proved
in exactly the same way.
\begin{proposition}
Let $\data=(U_0,(U_a))$ be an atomic datum for a geometric lattice $L$.
Let $d_0:U^L\rightarrow\dsum_{a\in\atoms}U_a$ be a morphism with the property that
for any $x\in L$ there exists an integer $h^x$ such that the diagram
\eqref{cox} commutes.
Let $V=\Orlik(\data,L)$.
Then there exist unique morphisms $d_{x;y}:V_x\rightarrow V_y$, $0\ne x\prec y\in L$, satisfying
the following property. Let
\[
d_i^z=\dsum_{x\in L_{\le z,i},y\in L_{\le z}: x\prec y}d_{x;y}:V_{i,\le z}\rightarrow V_{i+1,\le z},\quad i>0,
\]
and $d_0^z = \pi_{\le z}\circ d_0 |_{U^z}: U^z \rightarrow V_{1,\le z}$ as above. Then
\[
\der_y\circ d_{x;y}=h^y\id_{V_x}-d_{\rk(x)-1}^y\circ\der_x\text{ for all pairs }0\ne x\prec y\in L.
\]
Moreover, for any $z\in L$ such that $h^z$ is invertible in $R$,
$\frac1{h^z}d^z$ is an $L$-homotopy for $V_{\le z}=\Orlik(\data_{\le z},L_{\le z})$.
\end{proposition}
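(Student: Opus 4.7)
The proof is a direct adaptation of the induction used for Proposition \ref{hmtpycnstrct}, carried out now simultaneously over all $z \in L$ rather than only over $z \in L_{\irr}$. I would induct on $i = \rk(x)$, assuming the existence of unique morphisms $d_{x';y'}: V_{x'} \to V_{y'}$ for all $0 \ne x' \prec y' \in L$ with $\rk(x') < i$ satisfying $\der_{y'} d_{x';y'} = h^{y'} \id - d_{\rk(x')-1}^{y'} \der_{x'}$, together with the homotopy identity $\der_{j+1} d_j^z + d_{j-1}^z \der_j|_{V_{\le z, j}} = h^z \id$ for all $z \in L$ and all $0 < j < i$. Uniqueness of each $d_{x;y}$ is automatic, since $\der_y$ is by construction the inclusion $V_y \hookrightarrow V_{\prec y}$.

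To construct $d_{x;y}$ for $\rk(x) = i$ and $y \succ x$, I would compose the inductive homotopy identity for $j = i-1$ and $z = y$ with $\der_i$ on the right; using $\der_{i-1} \der_i = 0$ this yields $\der_i d_{i-1}^y \der_i|_{V_{\le y, i}} = h^y \der_i|_{V_{\le y, i}}$, and in the base case $i = 1$ the same conclusion is precisely diagram \eqref{cox} for $y$. Restricting to $V_x$ shows that $h^y \id_{V_x} - d_{i-1}^y \der_x$ is annihilated by $\der_{\prec y}$ and lies in $V_{\prec y}$, hence in $V_y = \Ker \der_{\prec y}$ by \eqref{defmincmplx}; this uniquely determines $d_{x;y}: V_x \to V_y$ with the desired property.

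To extend the homotopy identity to $j = i$, I would verify that for each $x \in L_{\le z, i}$ the restriction of $\der_{i+1} d_i^z + d_{i-1}^z \der_i$ to $V_x$ equals $h^z \id_{V_x}$. Applying $\der_i$ and using the intermediate relation $\der_i d_{i-1}^z \der_i = h^z \der_i$ together with injectivity of $\der_i|_{V_x}$ reduces the equality to showing that the expression preserves $V_x$. Expanding
\[
\sum_{y \in L_{\le z}:\, y \succ x} \der_y d_{x;y} + d_{i-1}^z \der_x = \sum_{y:\, z \ge y \succ x} h^y \id_{V_x} - \sum_{y:\, z \ge y \succ x} d_{i-1}^y \der_x + d_{i-1}^z \der_x,
\]
the contributions indexed by pairs $(x', y')$ with $x' \prec x$, $y' \succ x'$, $z \ge y'$ and $y' \ne x$ cancel between the two double sums via the identification $y = x \join y'$. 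This bookkeeping is the only delicate point, and I expect it to be the main obstacle to a clean presentation; it is however simpler here than for Proposition \ref{hmtpycnstrct}, because $y$ now ranges over all of $L_{\le z}$ rather than only $L_{\le z, \irr}$, so pairs with $x \join y' \in L_{\dec}$, which previously required the orthogonality condition of Definition \ref{admsection}, are absorbed into the cancellation with no special treatment.

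For the final clause, if $h^z$ is invertible in $R$ then the established identity $\der_{j+1} d_j^z + d_{j-1}^z \der_j = h^z \id$ for $j \ge 1$, together with $\der_1 d_0^z = h^z \id_{U^z}$ supplied by \eqref{cox}, shows that $\frac{1}{h^z} d^z$ is a contracting homotopy for $V_{\le z}$; the grading compatibility \eqref{eqncompatibled} is built into the definition of $d^z$ as a direct sum over covering pairs $x \prec y$ in $L_{\le z}$.
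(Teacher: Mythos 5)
Your proof is correct and follows exactly the inductive scheme of Proposition~\ref{hmtpycnstrct}, which is what the paper intends (it only remarks that the variant ``is proved in exactly the same way''). You have correctly filled in the details: the construction of $d_{x;y}$ via \eqref{defmincmplx}, the reduction of the homotopy identity for $j=i$ to preservation of $V_x$ via injectivity of $\der_i$, and the bookkeeping cancellation indexed by $y=x\join y'$ for $y'\ne x$. Your observation that the cancellation is cleaner here than in Proposition~\ref{hmtpycnstrct} is also accurate: since $y$ now ranges over all of $L_{\le z}$, the triple $(x\join y',x',y')$ always appears in the middle sum, so there is no need to invoke the orthogonality condition of Definition~\ref{admsection} to dispose of pairs with $x\join y'\in L_{\dec}$. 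One small imprecision: uniqueness of $d_{x;y}$ is not ``automatic'' but rather inductive in $\rk(x)$, since the right-hand side $h^y\id_{V_x}-d_{\rk(x)-1}^y\der_x$ is only determined once all $d_{x';y'}$ with $\rk(x')<\rk(x)$ are fixed; injectivity of $\der_y$ then pins down $d_{x;y}$.
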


\begin{remark} \label{remarkos}
In particular, this applies to atomic data with constant coefficients (cf. \S \ref{constantcoefficients}),
where for a fixed $a\in\atoms(L)$ we let $d_0:U_0 \rightarrow U_a$ be the identity map.
In this case $h^x=1$ if $a\le x$ and $h^x=0$ otherwise.
The ensuing contracting homotopy for the Orlik-Solomon algebra $\Orlik (\data, L) = \OS (L) \otimes U_0$ is the standard one obtained
by multiplying in $e_a$ (cf. \S \ref{reviewos}).
\end{remark}

\subsection{Complex reflection arrangements} \label{reflarr}
Now let $V$ be a complex vector space and $G$ a finite (complex) reflection group on $V$.
Choose a $G$-invariant inner product $(\cdot,\cdot)$ on $V$.
Consider the reflection arrangement $\arr = \{H_a\}_{a\in\atoms}$
consisting of the reflecting hyperplanes $H_a$ of $G$ (cf.~\cite[Ch.~6]{MR1217488})
and let $L$ be the corresponding intersection lattice.
Let $\data=(V,(H_a^\perp))$ be the defining atomic datum of the reflection arrangement,
where we identified $V$ and $V^*$ through $(\cdot,\cdot)$.
For each $a\in\atoms$ choose a vector $w_a\in H_a^\perp$ such that $(w_a,w_a)=1$.
Note that $L$ is indecomposable if and only if $G$ is indecomposable (or equivalently, irreducible).
Also, for any $x\in L$, the lower interval $L_{\le x}$ is the intersection lattice of the reflecting
hyperplanes of a reflection subgroup $G_x$ of $G$, namely the pointwise stabilizer of $x$ \cite[Corollary 6.28]{MR1217488}.

\begin{proposition} \label{Coxprop}
Under these assumptions
$d_0(v)=(2(v,w_a)w_a)_{a\in\atoms}$, $v \in V$, is a \admiss\ section for the defining atomic datum $\data=(V,(H_a^\perp))$
of the reflection arrangement $\arr$ with Coxeter numbers
$h^x=\frac{2\card{\atoms_{\le x}}}{\rk(x)} \in\Z_{>0}$, $x\in L_{\irr}$.
\end{proposition}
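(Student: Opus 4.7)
The plan is to verify the two conditions of Definition~\ref{admsection} for $d_0$. First, I would identify $V$ with $V^*$ via the inner product and note that $U^x = \sum_{a \le x} \C w_a$ coincides with the orthogonal complement $x^\perp$ of $x \subset V$, so $\rk(x) = \dim U^x$. The composition $\der_1 \circ \pi_{\le x} \circ d_0$ is then the self-adjoint operator
\[
T_x \colon V \to V, \quad T_x(v) = \sum_{a \le x} 2 (v, w_a) w_a,
\]
which maps $V$ into $U^x$ and vanishes on $x = (U^x)^\perp$, since $v \in x \subset H_a$ forces $(v, w_a) = 0$. Condition~(1) of Definition~\ref{admsection} then reduces to the assertion $T_x \rest_{U^x} = h^x \cdot \id$ for all $x \in L_{\irr}$.

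The crux is an application of Schur's lemma. By Steinberg's theorem the pointwise stabilizer $G_x$ is generated by $\{s_a : a \le x\}$; this group permutes the lines $\C w_a$, $a \le x$, and since the rank-one orthogonal projection $v \mapsto (v, w_a) w_a$ depends only on the line $\C w_a$ (a scalar of modulus one cancels against its conjugate), the operator $T_x$ is $G_x$-equivariant. The main step is to show that $G_x$ acts irreducibly on $U^x$ when $x$ is indecomposable: given a non-trivial $G_x$-invariant splitting $U^x = U' \oplus U''$, each restriction $s_a \rest_{U^x}$ preserves both summands, and since $\C w_a$ is its unique non-fixed eigenspace, each $w_a$ lies in $U'$ or in $U''$; the induced partition $\atoms_{\le x} = \atoms' \sqcup \atoms''$ produces non-zero elements $x' = \bigvee_{a \in \atoms'} a$ and $x'' = \bigvee_{a \in \atoms''} a$ with $\atoms_{\le x'} \subseteq \atoms'$ (as $a \le x'$ forces $w_a$ to lie in the span of $\{w_{a'} : a' \in \atoms'\} \subseteq U'$) and $x' \disjoin x'' = x$, contradicting the indecomposability of $x$. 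Schur then gives $T_x \rest_{U^x} = c \cdot \id$, and taking the trace $\operatorname{tr}(T_x) = 2 \card{\atoms_{\le x}}$ (each rank-one projection contributes one, and $T_x$ vanishes on $x$) identifies $c = 2 \card{\atoms_{\le x}} / \rk(x) = h^x$. Positivity is clear; integrality follows from the classification of irreducible complex reflection groups.

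For condition~(2), suppose atoms $a \ne a'$ satisfy $a \join a' \in L_{\dec}$, so that $\atoms_{\le a \join a'} = \{a, a'\}$. By Steinberg's theorem $G_{a \join a'} = \langle s_a, s_{a'} \rangle$ has only $H_a$ and $H_{a'}$ as reflecting hyperplanes. Hence the conjugate reflection $s_{a'} s_a s_{a'}^{-1}$, whose hyperplane has normal
\[
s_{a'}(w_a) = w_a - (1 - \zeta)(w_a, w_{a'}) w_{a'}
\]
for some root of unity $\zeta \ne 1$, must lie in $\{H_a, H_{a'}\}$. A short computation rules out $s_{a'}(w_a) \in \C w_{a'}$ (which would force $w_a \in \C w_{a'}$, impossible since $a \ne a'$) and shows that $s_{a'}(w_a) \in \C w_a$ is equivalent to $(w_a, w_{a'}) = 0$. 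Therefore the $a'$-component $2(v, w_{a'}) w_{a'}$ of $d_0 \rest_{U_a}$ vanishes, completing the verification.

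The main obstacle is the irreducibility step in condition~(1), but as sketched above it reduces cleanly to the combinatorial decomposition criterion of \S\ref{preliminaries} thanks to the fact that every complex reflection has a single non-trivial eigenspace.
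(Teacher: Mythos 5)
Your proof is correct and verifies both conditions of Definition~\ref{admsection}, but it takes a noticeably different route than the paper's. The paper's proof is essentially a citation: it attributes the commutativity of \eqref{cox} to \cite[Proposition~6.93]{MR1217488} (which is precisely the Schur-type computation) and the integrality of $h^x$ to \cite[Theorem~6.97]{MR1217488}, and it simply asserts the orthogonality of $w_a$ and $w_{a'}$ as a known fact for condition~(2). You instead give self-contained derivations of these ingredients. Your Schur's lemma argument is correct: the equivariance of $T_x$, the deduction of irreducibility of $G_x$ on $U^x$ from indecomposability of $x$ via the decomposition criterion $\atoms_{\le x}=\atoms_{\le x'}\sqcup\atoms_{\le x''}$, and the trace computation identifying $c=2\card{\atoms_{\le x}}/\rk(x)$ are all sound. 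Similarly, your proof of orthogonality from Steinberg's theorem and the conjugation $s_{a'}s_as_{a'}^{-1}$ is a genuine argument rather than a citation, and it goes through. The one place where you match the paper's level of non-self-containment is the integrality of $h^x$: you fall back on ``classification,'' which is fair since the cited Theorem~6.97 is also proved case-by-case, but it is the only step of the proposition not derived from first principles in your writeup. A minor remark: the paper's proof additionally establishes the identity \eqref{Coxsection} ($\der d_0^x(\sum v_i)=\sum h^{x_i}v_i$ for the decomposition $x=\disjoin x_i$), which is not part of the proposition's statement but is used in the proof of the following proposition on restricted arrangements; your argument would produce it immediately since $U^x=\dsum U^{x_i}$ is an orthogonal direct sum, but you should be aware it is needed downstream.
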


Note that if $G$ is a real reflection group (i.e.~a finite Coxeter group)
then $h^x$ is the Coxeter number of $G_x$ in the usual sense (cf.~\cite[p.~257]{MR1217488}).
We remark that for arbitrary complex reflection groups a different notion of Coxeter number, which
appears naturally in other contexts, can be found in the literature (cf.~\cite{Grif-Gor}).\footnote{We thank the referee for pointing this out to us.}

\begin{proof}
In Definition \ref{admsection}, the commutativity of \eqref{cox} and the integrality of $h^x$ follow from \cite[Proposition 6.93]{MR1217488}
and [ibid., Theorem 6.97], respectively.
More generally, if $x=\disjoin_{i=1}^kx_i$ is the decomposition of $x \in L$ into
indecomposables, then $U^x=\dsum_{i=1}^k U^{x_i}$ is
an orthogonal direct sum with respect to $(\cdot,\cdot)$ and therefore
\begin{equation} \label{Coxsection}
\der d_0^x(\sum_{i=1}^kv_i)=\sum_{i=1}^kh^{x_i}v_i \ \text{ for } v_i\in U^{x_i}, \, i = 1, \dots, k.
\end{equation}
The second condition of Definition \ref{admsection} follows from the fact that if $a\join a'$
is decomposable then the vectors $w_a$ and $w_{a'}$ are orthogonal.
\end{proof}

Let $x\in L$ and $\proj_x$ be the orthogonal projection $V\rightarrow
x=V/x^\perp$. The restriction $\arr_{\ge x}$ of the reflection arrangement $\arr$ to $x$, which is defined by
the vectors $\proj_x (w_a)$, $a \not\le x$, is in
general no longer a reflection arrangement (cf.~\cite[Example 6.83]{MR1217488} for a simple example).
%the arrangements $\Phi_{n,m}$, $0 < m < n$, defined in
%the appendix for examples).
Set
\[
d_{0,\ge x}(v)=\big(\sum_{a'\in\atoms:a'\join x=a}
2(v,w_{a'})\proj_x(w_{a'})\big)_{a\in\atoms(L_{\ge x})},\ \ \ v\in x.
\]
We have $\der\circ d_{0,\ge x}=\proj_x\der\circ d_0\rest_{x}$.

\begin{proposition}
$d_{0,\ge x}$ is a \admiss\ section for the
defining atomic datum of the restricted hyperplane arrangement $\arr_{\ge x}$ on $x$.
\end{proposition}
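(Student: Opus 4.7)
The plan is to verify the two conditions of Definition \ref{admsection} for the section $d_{0,\ge x}$ on the defining atomic datum of $\arr_{\ge x}$. Both will be reduced to properties of the ambient Coxeter section $d_0$ via the identity $\der\circ d_{0,\ge x}=\proj_x\der\circ d_0|_x$ recorded just before the statement. The key combinatorial input is a comparison of $\disjoin$-decompositions in $L$ and in $L_{\ge x}$: writing $y=y_1\disjoin\cdots\disjoin y_m$ in $L$, the distributivity of the $\disjoin$-components in $L_{\le y}$ (cf.~\cite[Theorem 2.45]{MR1434477}) gives $(x\join y_i)\meet(x\join y_j)=x$ for $i\ne j$, since $y_j\meet(x\join y_i)=(y_j\meet x)\join(y_j\meet y_i)=y_j\meet x\le x$. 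Consequently $y$ admits the $\disjoin$-decomposition $\disjoin_{i:\,y_i\not\le x}(x\join y_i)$ in $L_{\ge x}$. In particular, if $y\in(L_{\ge x})_{\irr}$ then exactly one $L$-component $y_{i_0}$ satisfies $y_{i_0}\not\le x$ (and then $y=x\join y_{i_0}$); if $y$ has rank $2$ in $L_{\ge x}$ and decomposes there as $a\disjoin a'$, then precisely two $L$-components $y_1,y_2$ are not $\le x$, giving $a=x\join y_1$ and $a'=x\join y_2$.

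For the first condition of Definition \ref{admsection}, I would fix $y\in(L_{\ge x})_{\irr}$ with the unique non-$\le x$ component $y_{i_0}$. Any $v\in U^y_{\ge x}=y^\perp\cap x$ decomposes orthogonally as $v=\sum v_i\in\bigoplus y_i^\perp$; the condition $y_i\le x$ (equivalently, $y_i\supset x$ as a subspace) forces $y_i^\perp\subset x^\perp$, so $\proj_x v_i=0$ for $i\ne i_0$ and hence $v=\proj_x v_{i_0}$. Combining with \eqref{Coxsection} yields
\[
\der d_{0,\ge x}(v)=\proj_x\!\Bigl(\sum_i h^{y_i} v_i\Bigr)=h^{y_{i_0}}v,
\]
so the diagram \eqref{cox} commutes with the positive integer $h^y:=h^{y_{i_0}}$.

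The main remaining task — and the principal obstacle — is the second condition, which after a short computation (using that each $U_{a'}\subset x$ is one-dimensional and spanned by $\proj_x w_{b_1}$ for any atom $b_1\le a'$ with $b_1\not\le x$) reduces to proving $(\proj_x w_{b_0},\proj_x w_{b_1})=0$ for atoms $b_0\le y_1$, $b_1\le y_2$ of $L$, where $y_1,y_2$ are the two $L$-components of $a\join a'$ not $\le x$ supplied by the combinatorial lemma. Since $y_1,y_2$ are orthogonal $\disjoin$-components of $a\join a'$ in $L$, the Coxeter orthogonality yields $(w_{b_0},w_{b_1})=0$; by the orthogonal splitting $V=x\oplus x^\perp$, it then suffices to show $(\proj_{x^\perp}w_{b_0},\proj_{x^\perp}w_{b_1})=0$. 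I would decompose $x=x_1\disjoin\cdots\disjoin x_l$ in $L$ to obtain the orthogonal sum $x^\perp=\bigoplus_k U^{x_k}$; each indecomposable $x_k$ is $\le y_j$ for a unique $j$, since its atoms cannot split across the distinct $\disjoin$-components of $a\join a'$. The Coxeter orthogonality $(w_{b_0},w_c)=0$ whenever $b_0\join c$ is $\disjoin$-decomposable (Proposition \ref{Coxprop}) then places $\proj_{x^\perp}w_{b_0}$ in the sum of $U^{x_k}$ with $x_k\le y_1$, and symmetrically for $\proj_{x^\perp}w_{b_1}$ and $y_2$. Since the two supporting index sets are disjoint and the corresponding summands are mutually orthogonal, the two projections are orthogonal, completing the argument.
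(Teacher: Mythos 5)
Your verification of the first condition is essentially the paper's own argument and is fine. The gap is in the second condition, precisely at the step ``if $y$ has rank $2$ in $L_{\ge x}$ and decomposes there as $a\disjoin a'$, then precisely two $L$-components $y_1,y_2$ of $y$ are not $\le x$, with $a=x\join y_1$, $a'=x\join y_2$.'' Your comparison of decompositions only goes one way: it shows that the $L$-decomposition of $y$ induces a $\disjoin$-decomposition $\disjoin_{i:\,y_i\not\le x}(x\join y_i)$ of $y$ in $L_{\ge x}$ (whence indecomposability in $L_{\ge x}$ forces a single non-$\le x$ component, which is all you need for condition (1)). It does \emph{not} show that a decomposition of $y$ in $L_{\ge x}$ descends to one in $L$: the factors $x\join y_i$ may themselves be decomposable in $L_{\ge x}$, so $y$ can perfectly well be decomposable in $L_{\ge x}$ while having only one $L$-component not $\le x$, in which case there are no $y_1,y_2$ for your orthogonality computation to use. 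This is not a quirk of bookkeeping; the statement is false for general arrangements. For instance, take the rank-three arrangement in $K^3$ defined by the vectors $(0,0,1)$, $(1,0,0)$, $(1,0,1)$, $(0,1,0)$, $(0,1,1)$, and let $x$ be the atom spanned by $(0,0,1)$. Then $L_{\ge x}$ has exactly two atoms, namely the spans $\langle(0,0,1),(1,0,0)\rangle$ and $\langle(0,0,1),(0,1,0)\rangle$, so their join (the maximal element) is decomposable in $L_{\ge x}$; but one checks easily that it is indecomposable in $L$, so only one $L$-component fails to be $\le x$. Hence the fact you are invoking genuinely uses the reflection hypothesis and cannot come out of the lattice-theoretic comparison alone.

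This missing step is in fact the heart of the paper's proof of condition (2). There, decomposability of $a_1\join a_2$ in $L_{\ge x}$ is rewritten as $\atoms_{\le a_1\join a_2}=\atoms_{\le a_1}\cup\atoms_{\le a_2}$, and one first proves $(w_{z_1},w_{z_2})=0$ for $z_1\in\atoms_{\le a_1}\setminus\atoms_{\le a_2}$ and $z_2\in\atoms_{\le a_2}\setminus\atoms_{\le a_1}$ by a group-theoretic argument: if the inner product were nonzero, a reflection $s\in G_{a_1\join a_2}$ with fixed hyperplane $H_{z_2}$ would move $z_1$ to an atom outside $\atoms_{\le a_1}\cup\atoms_{\le a_2}$, contradicting decomposability in $L_{\ge x}$. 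Only then, using this orthogonality, does one conclude that $G_{a_1\join a_2}$ stabilizes the two relevant spans and thereby \emph{manufactures} the decomposition $a_1\join a_2=y_1\disjoin y_2$ in $L$ (with $a_1=b_1\disjoin y_2$, $a_2=y_1\disjoin b_2$, $x=b_1\disjoin b_2$) that you are taking for granted. Note the reversed logical direction: the paper derives the $L$-decomposition from the orthogonality of the $w$'s, whereas you assume the $L$-decomposition and then recover the orthogonality from Proposition \ref{Coxprop}. Granting that decomposition, your subsequent reduction to $(\proj_x w_{b_0},\proj_x w_{b_1})=0$ and the splitting of $x^\perp=\dsum_k U^{x_k}$ is correct and is a clean variant of the paper's concluding computation; but without a proof of the descent of decomposability from $L_{\ge x}$ to $L$ --- i.e.\ without the reflection-group input --- the argument is missing its key ingredient.
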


\begin{proof}
Let $y\in L_{\ge x}$ and $y=\disjoin_{i=1}^k y_i$ its decomposition into
indecomposables in $L$. By Lemma \ref{irrdecom}, we have
$x=\disjoin_{i=1}^k x_i$ with $x_i \le y_i$.
For $v=\sum_{i=1}^k v_i\in U^y \cap x$ with $v_i\in U^{y_i}$, we get from \eqref{Coxsection} that
\[
\der d_{0,\ge x}^y(v)=\proj_x\der d_0^y(v)=\sum_{i=1}^k h^{y_i}\proj_xv_i.
\]
Suppose now that $y$ is indecomposable in $L_{\ge x}$. This
is equivalent to $x_i = y_i$ for all but a single index $i$, say for all $i \neq 1$.
Therefore, $v_i \in U^{y_i} \subset U^x = x^\perp$ for all $i \neq 1$, and
%, if $v=\sum v_i\in U^y \cap x$, then
$\der d_{0,\ge x}^y(v)=h^{y_1}\proj_xv_1=h^{y_1}v$.

To check the second condition of Definition \ref{admsection}, suppose that
$a_1$, $a_2$ are distinct atoms of $L_{\ge x}$ and that $a_1 \join a_2$ is decomposable in $L_{\ge x}$.
This is equivalent to $\atoms_{\le a_1 \join a_2} = \atoms_{\le a_1} \cup \atoms_{\le a_2}$. Set
$A_1 = \atoms_{\le a_1} \backslash \atoms_{\le a_2}$ and $A_2 = \atoms_{\le a_2} \backslash \atoms_{\le a_1}$ and let $U_i \subset U^{a_1 \join a_2}$
be the span of the vectors $w_z$, $z \in A_i$ ($i = 1, 2$). We have then
\[
A_i = \{ a \in \atoms_{\le a_1 \join a_2} : \proj_x (w_a) \in U^{a_i} \cap x, \, \proj_x (w_a) \neq 0  \}.
\]
Observe now that
$(w_{z_1},w_{z_2}) = 0$
for all $z_1 \in A_1$ and $z_2 \in A_2$.
For if we had $(w_{z_1},w_{z_2}) \neq 0$, then a complex reflection $s \in G_{a_1 \join a_2}$ with fixed hyperplane $H_{z_2}$
would satisfy $s (z_1) \notin \atoms_{\le a_1} \cup \atoms_{\le a_2}$,
in contradiction to
$\atoms_{\le a_1 \join a_2} = \atoms_{\le a_1} \cup \atoms_{\le a_2}$.

Since the group $G_{a_1 \join a_2}$ is generated by its subgroup $G_x$
together with complex reflections with fixed hyperplanes $H_z$, $z \in A_1 \cup A_2$, we conclude that
$G_{a_1 \join a_2}$ stabilizes the mutually orthogonal subspaces $U_1$ and $U_2$. Therefore we get a decomposition
$a_1 \join a_2 = y_1 \disjoin y_2$ such that $a_1 = b_1 \disjoin y_2$ and $a_2 = y_1 \disjoin b_2$ with $b_i \prec y_i$, and
consequently $x = b_1 \disjoin b_2$. Since $U^{y_1}$ and $U^{y_2}$ are orthogonal, $U^{a_1} \cap x \subset U^{y_2}$ and
$w_{a'} \in U^{y_1}$ for $a' \in A_2$, we obtain that the $a_2$-component
of $d_{0,\ge x}(v)$ is zero for all $v \in U^{a_1} \cap x$.
%Let $0\ne v\in a_1^\perp \cap x$. Suppose that the $a_2$-component
%of $d_{0,\ge x}(v)$ is non-zero. Then $(v,w_z)\ne0$ for some
%$z\in\atoms$ such that $z\join x=a_2$. Thus, $v\notin H_z$
%and if $s_z$ is a complex reflection in $G$ with fixed hyperplane $H_z$, then the elements
%$v$, $s_z(v)$ and $w_z$ are linearly dependent, but their orthogonal projections to $x$ are pairwise non-proportional.
%Therefore $a_1\join a_2$ is indecomposable in $L_{\ge x}$.
\end{proof}

Since any finite complex reflection group is a product of
indecomposable groups,
we proved:

\begin{theorem} \label{roots}
Any restriction of a reflection arrangement is \admis.
\end{theorem}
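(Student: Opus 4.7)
The plan is to combine the two immediately preceding propositions with Proposition \ref{hmtpycnstrct}, and to handle the decomposable elements of the restriction lattice by piecing together $L$-homotopies from the indecomposable factors. Given any restriction $\arr_{\ge x}$ of a complex reflection arrangement (with lattice $\Lambda := L_{\ge x}$), the second preceding proposition supplies a compatible section $d_{0,\ge x}$ for its defining atomic datum $\data$, whose Coxeter numbers $h^y$ are the Coxeter numbers of appropriate reflection subgroups of $G$, hence positive integers. Since the coefficient ring is $\C$, these integers are invertible, so Proposition \ref{hmtpycnstrct} produces canonical morphisms $d_{w;y}: V_w \to V_y$ for all $0 \ne w \prec y$ with $y \in \Lambda_{\irr}$, and yields an $L$-homotopy $\frac1{h^z} d^z$ on $V_{\le z}$ for every $z \in \Lambda_{\irr}$. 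This settles the $L$-contractibility condition at every indecomposable $z \in \Lambda$.

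For a decomposable $z \in \Lambda$, I would write $z = z_1 \disjoin \cdots \disjoin z_m$ with each $z_i \in \Lambda_{\irr}$ (Lemma \ref{irrdecom}), so that $\Lambda_{\le z}$ factors as $\prod_i \Lambda_{\le z_i}$. Since each atom subspace is a one-dimensional line, the defining atomic datum is automatically orthogonal, and Lemma \ref{irred} then gives $V_y = 0$ for every decomposable $y \in \Lambda$. A small combinatorial check shows that for $w \in \Lambda_{\le z_i, \irr}$, every indecomposable element of $\Lambda_{\le z}$ covering $w$ must lie in $\Lambda_{\le z_i}$; hence the collection of maps defining $d^z$ restricts on each piece $V_{\le z_i}$ to the collection defining $d^{z_i}$. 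An $L$-homotopy on $V_{\le z}$ is then obtained by patching the individual homotopies via $\tilde d|_{V_{\le z_i}} = \frac1{h^{z_i}} d^z|_{V_{\le z_i}}$.

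The one step requiring care is ensuring that this piecewise definition of $\tilde d_0$ on $\Im \der_1 = U^z$ is consistent where the $U^{z_i}$ could in principle meet. Here the $G$-invariant inner product on $V$ enters: it forces $U^z = \bigoplus_i U^{z_i}$ to be an orthogonal direct sum, so the differently scaled sections $\frac1{h^{z_i}} d_0^{z_i}$ on the summands $U^{z_i}$ fit together into a well-defined section on $U^z$ without ambiguity. With this point addressed, the homotopy identity on $V_{\le z}$ reduces to its validity on each $V_{\le z_i}$, already established in Proposition \ref{hmtpycnstrct}. The main obstacle beyond invoking the preceding propositions is thus exactly this combinatorial bookkeeping for decomposable lattice elements, which the orthogonality renders routine.
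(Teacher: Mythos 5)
Your proof is correct and takes the same route as the paper: combine Proposition \ref{Coxprop} and the restriction proposition to get a compatible section, invoke Proposition \ref{hmtpycnstrct} for indecomposable $z$, and then assemble homotopies for decomposable $z$ from the indecomposable factors. The paper compresses this last step into the remark that every finite complex reflection group is a product of indecomposable ones, relying on Lemma \ref{irred} (the complex is supported on indecomposables, so $V_{\le z}$ in positive rank is the direct sum of the $V_{\le z_i}$) exactly as you spell out. One small imprecision: the direct-sum decomposition $U^z = \bigoplus_i U^{z_i}$ does not really need the $G$-invariant inner product; it already follows from $\dim U^{x} = \rk(x)$ together with $\rk(z) = \sum_i \rk(z_i)$, which holds for the defining atomic datum of any hyperplane arrangement. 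The inner product is what makes the summands mutually \emph{orthogonal}, which is used elsewhere (in proving Proposition \ref{Coxprop} and its restriction analogue) but is not needed for the patching of $\tilde d_0$ to be well defined.
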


\begin{remark}
Recall that by \cite{MR1280576} every free hyperplane arrangement is $n$-formal.
It is also known that restrictions of Coxeter arrangements are always free \cite{MR1231562}.
Therefore the relation complex of the restriction of a Coxeter arrangement is acyclic.
However, the argument in \cite{MR1280576} is indirect and
% at least at first sight
does not seem to produce a contracting homotopy (let alone an $L$-homotopy) explicitly.
Also, the proof of freeness in \cite{MR1231562} involves a case by case analysis
to deal with restrictions to elements of rank greater than one.
% which is avoided in the argument above.
An alternative, classification-free proof for the freeness of restrictions of Coxeter arrangements was obtained
in \cite{MR1722107} using the algebraic representation theory of reductive groups.
Our approach to the relation complex is direct and elementary, but does not touch the question of freeness.
\end{remark}

\begin{remark}
It is instructive to explicate the relation complexes of the various reflection arrangements.
The case of rank two considered in Example \ref{examplerank2} is essentially trivial.
For any hyperplane arrangement $\arr$ of rank two, we can construct a \admiss\ section for its defining atomic datum as follows.
For distinct atoms $a$, $b \in \atoms$ let $\proj_{a,b}:V\rightarrow H_a^\perp\dsum H_b^\perp$
be the inverse map to the linear isomorphism $\der$ between the two-dimensional vector spaces
$H_a^\perp\dsum H_b^\perp$ and $V$.
%by the condition $\der\proj_{a,b}(v)=v$ and set
Set $d_0=\sum_{\{a,b\} \subset \atoms, \, a \neq b}\proj_{a,b}$.
Here $h={\card{\atoms}\choose2}$ and $h^a=\card{\atoms}-1$ for each atom.
In the appendix we will consider the infinite families of
indecomposable Coxeter arrangements.
\end{remark}

\section{A generalization of the Orlik-Solomon algebra} \label{algstr}

In this section we consider a general construction, modeled after the Orlik-Solomon algebra, which
allows us to pass from a \admis\ complex of vector spaces $V$ to a \admis\ complex of modules
over the symmetric algebra $\Sym (V_0)$.
We then apply this construction to truncations of a lattice $L$ and more specifically to the relation complexes
of \S \ref{exmplatomic}.

Suppose that we are given a chain complex $(V=\dsum_{x \in L} V_x,\der)$ of vector spaces
over a field $K$ of characteristic zero with a compatible grading
by a geometric lattice $L$.
Let $S=S(V)$ be the universal supercommutative algebra generated by $V$,
i.e., the quotient of the tensor algebra of $V$ by the two-sided ideal
generated by all expressions $uv-(-1)^{ij}vu$ for $u\in V_i$, $v\in V_j$, $i,j=0,\dots,n$.
Thus, $S\simeq\Sym(\dsum_{m\text{ even}}V_m)\otimes\bigwedge(\dsum_{m\text{ odd}}V_m)$.
We grade $S$ by assigning degree $i$ to $V_i$.
By extending $\der$ to a (super-)derivation $\tilde\der$ on $S$,
we obtain a differential graded algebra $(S,\tilde\der)$.
Note that $S_0$ is the symmetric algebra of $V_0$ and that $S$ is an algebra over $S_0$.

The algebra $S$ carries a canonical grading by the lattice $L$, in which $V_x \subset S_x$
and
\[
a\in S_x, b\in S_y \quad \text{implies} \quad ab\in S_{x\join y} \quad \text{for } x,y\in L.
\]
Explicitly, let $\basis_x$ be a basis of $V_x$, $x\in L$, set
$\basis_{\even}=\coprod_{x\in L_{\even}}\basis_x$ and $\basis_{\odd}=\coprod_{x\in L_{\odd}}\basis_x$.
Then, as $\{u_1,\dots,u_k\}$ ranges over all subsets of $\basis_{\odd}$ and $\{v_1,\dots,v_m\}$ ranges
over all multisets (i.e. sets with multiplicities) of elements of $\basis_{\even}$, the products
\[
v_1\dots v_mu_1\dots u_k
%:v_1\le\dots\le v_m\in\basis_{\even}, u_1<\dots<u_k\in\basis_{\odd}\}
\]
(defined up to sign if $k>1$) form a basis of $S$, and if $v_i\in V_{x_i}$, $u_j\in V_{y_j}$, then we have $v_1\dots v_mu_1\dots u_k\in S_x$ for
$x=x_1\join\dots\join x_m\join y_1\join\dots\join y_k$.
Note that the notation $S_0$ is unambiguous and that for any $a\in\atoms (L)$ the component
$S_{a}$ is mapped by $\tilde\der$ onto the ideal $\gen{\der V_a}{V_0}$ of $S_0$ (see \eqref{def: gen} for the notation).

The canonical $L$-grading on $(S,\tilde\der)$ is \emph{not} compatible in the sense of Definition \ref{compatible}.
The problem is that if $r_i\in V_{x_i}$, $i=1,\dots,m$, then
the product $r_1 \cdots r_m$ is of degree $\sum\rk (x_i)$, which differs from $\rk (\join x_i)$
if $x_1,\dots,x_m$ are dependent (cf.~Definition \ref{dependentdefinition}).
To rectify this, consider the vector space $I \subset S$ spanned by all products
\[
r_1\cdots r_m, \quad r_i\in V_{x_i},\ i=1,\dots,m,\ x_1,\dots,x_m\in L\text{ dependent}.
\]
Observe that as a consequence of Lemma \ref{deptriv}, the space $I$ is a two-sided ideal of $S$
and hence $\ideal=\ideal(L,V) := I+\tilde\der(I)$ is a differential ideal of $S$
(i.e.~a graded ideal which is mapped to itself under $\der$).
Also, $I$ is graded with respect to both the degree and the lattice $L$, and therefore $\ideal$ is graded
with respect to the degree.
We can now define the main object of this section.

%, which is generated by
%\[
%r_1\cdots r_m, r_i\in V_{x_i}, x_1,\dots,x_m\in L
%\text{ is a minimal strongly dependent set}.
%\]

\begin{definition}
The \emph{generalized Orlik-Solomon algebra} of the $L$-graded complex $V$ is
the differential graded algebra $\alg=\alg(L,V):=S (V)/\ideal (L,V)$.
\end{definition}

\begin{proposition} \label{graded}
The generalized Orlik-Solomon algebra $\alg = \alg (L,V)$ is compatibly $L$-graded.
We have $\alg_0=S_0 = \Sym (V_0)$ and for any $a\in\atoms(L)$ the image $\tilde\der(\alg_{a})$
is the ideal $\gen{\der V_a}{V_0}$ of $S_0$ generated by $\der V_a$.
\end{proposition}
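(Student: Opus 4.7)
The plan is to first establish that the differential ideal $\ideal = I + \tilde\der(I)$ is homogeneous with respect to the canonical $L$-grading on $S$, and then read off the remaining assertions from direct inspection of $L$-homogeneous monomials.

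For the first step, the subspace $I$ is manifestly $L$-graded, since its spanning products $r_1 \cdots r_m$ with $r_i \in V_{x_i}$ and $(x_i)$ dependent are themselves $L$-homogeneous (of $L$-degree $\join_i x_i$). To handle $\tilde\der(I)$, I take such a generator $\xi = r_1 \cdots r_m$, set $x := \join_i x_i$, and expand
\[
\tilde\der \xi = \sum_{j=1}^m \sum_{y \prec x_j} \pm \, r_1 \cdots (\der_{x_j;y} r_j) \cdots r_m,
\]
grouping summands by their $L$-degree $x_1 \join \cdots \join y \join \cdots \join x_m$. By Lemma \ref{deptriv}(3), any summand whose $L$-degree is strictly smaller than $x$ arises from a tuple that is itself dependent, and thus already lies in $I$. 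Consequently, every $L$-component of $\tilde\der\xi$ at an $x' < x$ lies in $I$, and the remaining $L$-component at $x$, being $\tilde\der\xi$ minus these, lies in $\tilde\der(I) + I = \ideal$. This proves $\ideal$ is $L$-graded, so that $\alg$ inherits a decomposition $\alg = \dsum_{x \in L} \alg_x$.

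Next I reconcile this $L$-grading with the integer grading on $\alg$. Any monomial $r_1 \cdots r_m \in S_x$ has integer degree $\sum_i \rk(x_i) \ge \rk(x)$, with equality iff $(x_i)$ is independent; in the strict case the monomial lies in $I$ and therefore vanishes in $\alg$. It follows that every $L$-homogeneous element of $\alg_x$ is concentrated in integer degree $\rk(x)$, so $\alg_i = \dsum_{\rk(x) = i} \alg_x$. Since $\tilde\der$ lowers the integer degree by one, for $\bar\xi \in \alg_x$ each nonzero $L$-component of $\tilde\der\bar\xi$ sits at some $y \le x$ with $\rk(y) = \rk(x) - 1$, and hence $y \prec x$. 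This yields the compatibility $\tilde\der(\alg_x) \subset \dsum_{y \prec x} \alg_y$ of Definition \ref{compatible}.

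For the two concrete identifications, note first that $\ideal_0 = 0$: a generator of $I$ at $L$-degree $0$ would force all $x_i = 0$, contradicting dependence, and the same join-versus-rank analysis prevents any $L$-component of $\tilde\der(I)$ from landing at $L$-degree zero; hence $\alg_0 = S_0 = \Sym(V_0)$. For an atom $a$, the module $\alg_a$ is represented in integer degree $1$ by elements $pv$ with $p \in \Sym(V_0)$ and $v \in V_a$, and since $\tilde\der$ vanishes on $V_0$ we get $\tilde\der(pv) = p \cdot \der v$, whence $\tilde\der(\alg_a) = \Sym(V_0) \cdot \der V_a = \gen{\der V_a}{V_0}$. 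The principal obstacle is the $L$-graded ideal claim: $\tilde\der$ a priori distributes $\tilde\der(I)$ across many $L$-degrees, and the combinatorial content of Lemma \ref{deptriv}(3) — that lowering a single entry of a tuple either preserves the join or produces a dependent tuple — is exactly what forces the lower $L$-components back inside $I$.
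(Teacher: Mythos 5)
Your proof is correct and follows essentially the same route as the paper's: you invoke Lemma~\ref{deptriv}(3) to split each $\tilde\der(r_1\cdots r_m)$ into a top $L$-degree piece and strictly lower pieces that, being products over dependent tuples, already lie in $I$, thereby showing $\ideal$ is $L$-graded, and then read off $\alg_0 = S_0$ and $\tilde\der(\alg_a) = \gen{\der V_a}{V_0}$ from $\ideal_0 = 0$. The only difference is that you spell out the compatibility of the $L$-grading with the integer grading, which the paper leaves as a one-line remark.
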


\begin{proof}
We need to show that the ideal $\ideal = \ideal (L,V)$ is $L$-graded.
Let $r_i\in V_{x_i}$, $i = 1, \dots, m$, where $x_1,\dots,x_m \in L$ are dependent.
It follows from part \ref{precdep} of Lemma \ref{deptriv} that we can write
\[
\tilde\der(r_1 \cdots r_m)=u+v
\]
where $u\in I_{\prec \join x_i}$ and $v\in S_{\join x_i}$. In particular, we have $v\in \ideal \cap S_{\join x_i} = \ideal_{\join x_i}$.
Since $\ideal$ is spanned as a vector space by the $L$-graded ideal $I$ and the elements
$\tilde\der(r_1 \cdots r_m)$,
it follows that $\ideal$ is $L$-graded and that for all $x\in L$ we have
\begin{equation} \label{nozliga}
\tilde\der(I_x)\subset\ideal_x+I_{\prec x}.
\end{equation}
Hence, $\alg$ is $L$-graded. It is clear from the definition of $I$ that the grading is compatible.
The last part of the proposition is also clear, since $\ideal_0=0$.
\end{proof}

\begin{remark}
Let $V_a=V_0$ for all $a\in\atoms(L)$ and $V_x=0$ otherwise, and
consider the complex $V$ with $\der_a=\id$.
Recall the construction of the Orlik-Solomon algebra $\OS (L) = \extalg / \ideal$ in \S \ref{reviewos} above.
Then the algebra $S(V)$ can be identified with $\extalg \otimes S_0$ and $\ideal (L,V)$ with $\ideal \otimes S_0$.
Therefore $\alg (L,V)=\OS(L)\otimes S_0$ as differential graded algebras.
\end{remark}

\begin{proposition} \label{main2}
Suppose that $V$ is a \admis\ $L$-graded complex of vector spaces. Then $\alg(V)$ is also \admis\ (as
a complex of vector spaces).
In particular, if in addition the maps $\der_1|_{V_a}$, $a \in \atoms$, are all injective, then
the minimal complex
\[
\Orlik(\Sym V_0,(\gen{\der V_a}{V_0})_{a \in \atoms (L)}) \simeq \alg(V)
\]
is exact.
\end{proposition}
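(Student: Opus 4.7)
The plan is to transport the $L$-homotopy structure from $V$ to $\alg(V)$ via a super-derivation extension, and then deduce the identification with the minimal complex in part (b) from the injectivity hypothesis via Remark \ref{remuniq}.

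Fix $x \in L$, and let $d^V$ be the $L$-homotopy on $V_{\le x}$. After an arbitrary extension of $d_0^V$ from $\im\der_1$ to a linear map $V_0 \to V_1$, I would extend $d^V$ to a super-derivation $D$ of degree $+1$ on the free super-commutative algebra $S(V_{\le x})$. Since both $\tilde\der$ and $D$ are odd super-derivations, their anticommutator $[D,\tilde\der] = D\tilde\der + \tilde\der D$ is an even super-derivation of degree zero. Its restriction to generators $V_i$ with $i \ge 1$ is the identity by the homotopy equation $d^V \der + \der d^V = \id$, and its restriction to $\im\der_1 \subset V_0$ is also the identity by the section property $\der_1 d_0^V = \id|_{\im\der_1}$. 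I would then verify that $D$ preserves the defining ideal $\ideal$ of $\alg(V_{\le x})$: on a dependent-system monomial $v_1\cdots v_m \in I$, each summand of $D(v_1\cdots v_m) = \sum_j \pm v_1 \cdots d^V(v_j) \cdots v_m$ has a factor in some $V_y$ with $y \succ x_j$ and remains in $I$ by Lemma \ref{deptriv}(2); on an element $\tilde\der z \in \tilde\der I$, the identity $D\tilde\der z = [D,\tilde\der](z) - \tilde\der Dz$ together with the fact that $[D,\tilde\der]$ sends each $V_{x_i}$ to itself yields $D\tilde\der z \in I + \tilde\der I = \ideal$. Hence $D$ descends to $\alg(V_{\le x})$, and a parallel argument using the $L$-compatibility of $d^V$ shows the descended $D$ satisfies $D(\alg_y) \subset \alg_{\succ y}$.

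From the descended anticommutator relation I would deduce acyclicity of $\alg(V_{\le x})$ in positive degrees by decomposing along the length filtration of $S$ and inverting on positive-length components. Then I would construct the $L$-homotopy $d^\alg$ on $\alg(V_{\le x})$ inductively in degree, in the spirit of Proposition \ref{hmtpycnstrct}: starting from $d^\alg_0 = D|_{\im\tilde\der_1}$ and $d^\alg_1 = D|_{\alg_1}$, recursively define $d^\alg_i$ for $i \ge 2$ on each $L$-graded piece $\alg_y$ (with $\rk(y) = i$, $y \le x$) by solving the homotopy equation $\tilde\der d^\alg_i = \id - d^\alg_{i-1}\tilde\der$, where acyclicity guarantees the right-hand side lies in $\im\tilde\der$. \textbf{The main technical obstacle} is organizing this induction so that the inductively chosen $d^\alg_i$ respect the $L$-grading; the equation determines $d^\alg_i$ only modulo $\Ker\tilde\der$, and making a coherent $L$-graded choice requires a bookkeeping argument analogous to that in Proposition \ref{hmtpycnstrct}.

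For part (b), under the additional hypothesis that $\der_1|_{V_a}$ is injective for each atom, I would show that $\tilde\der|_{\alg_a}: \alg_a \to \alg_0 = \Sym V_0$ is injective with image $\gen{\der V_a}{V_0}$. Any monomial representative of $\alg_a$ in $S$ involving two or more rank-one factors lies in $I$ (since two rank-one elements whose join is again of rank one must coincide, and hence are dependent), so $\alg_a$ is generated as a $\Sym V_0$-module by $V_a$. This gives a surjection $\Sym V_0 \otimes V_a \twoheadrightarrow \alg_a$, whose composition with $\tilde\der$ is the Koszul-like map $s \otimes v \mapsto s \cdot \der v$, with image $\gen{\der V_a}{V_0}$; this map is injective because $\der V_a$ consists of linearly independent linear forms in the polynomial ring $\Sym V_0$ and hence forms a regular sequence. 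Therefore $\alg_a \xrightarrow{\sim} \gen{\der V_a}{V_0}$ via $\tilde\der$, and combining with the acyclicity from part (a), Remark \ref{remuniq} identifies $\alg(V) \simeq \Orlik(\Sym V_0, (\gen{\der V_a}{V_0}))$, which is thus exact.
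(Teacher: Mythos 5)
Your overall strategy --- extend the $L$-homotopy to an odd super-derivation $D$ on $S(V_{\le x})$, note that $[D,\tilde\der]$ is an even derivation, check that $D$ preserves the ideal $\ideal$, and descend --- matches the paper's proof. However there are two genuine gaps.

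First, the ``main technical obstacle'' you flag is not an obstacle at all in the paper's argument, and your workaround (inductive construction of $d^\alg_i$ in the style of Proposition~\ref{hmtpycnstrct}) is never needed. The paper replaces $V_{\le x,0}$ by $V'_{\le x,0}=\der(V_{\le x,1})$ (so that the split $\alg(V_{\le x})\simeq\Sym W_0\otimes \alg(V'_{\le x})$ removes the ambiguity you tried to absorb by arbitrarily extending $d_0^V$ to all of $V_0$), and then uses the \emph{alternative grading} by total length (each generator of $V'_{\le x}$ has degree one). On that grading the even derivation $[\tilde d,\tilde\der]$ restricts to the identity on each generator, hence equals $n\cdot\id$ on the length-$n$ part --- this is just the Euler derivation argument. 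Therefore $d':=n^{-1}\tilde d$ on length $n$ is already a contracting homotopy, and because $\tilde d$ inherits the $L$-compatibility of $d^V$ (plus the fact that over-rank monomials vanish in $\alg$), it is an $L$-homotopy. You correctly observe that the length filtration is relevant, but you only use it to argue abstract acyclicity; you never exploit that the normalized super-derivation itself is the $L$-homotopy, which is exactly what closes the gap you left open.

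Second, in part (b) your injectivity claim for the Koszul-like map $\Sym V_0\otimes V_a\to\Sym V_0$, $s\otimes v\mapsto s\cdot\der v$, is false as soon as $\dim V_a\ge 2$: the Koszul relations $\der(v_2)\otimes v_1-\der(v_1)\otimes v_2$ lie in the kernel. (Indeed these relations are killed in $\alg_a$ as well, since $v_1v_2\in I$ because $a,a$ is a dependent pair, so $\tilde\der(v_1v_2)\in\ideal$.) The correct argument is shorter: by part (a) the two-term complex $\alg(V_{\le a})$ is acyclic, and it has no rank-two piece, so $\Ker\,\tilde\der|_{\alg_a}=\Im\,\tilde\der_2=0$; thus $\tilde\der|_{\alg_a}$ is injective with image $\gen{\der V_a}{V_0}$, and Remark~\ref{remuniq} gives the identification.
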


\begin{proof}
For any $x \in L$ we have
$S(V)_{\le x}=S(V_{\le x})$
and therefore
$\alg(V)_{\le x}=S(V_{\le x})/\ideal_{\le x}$.
Consider the complex $V'_{\le x}$ which coincides with $V_{\le x}$ in degrees at least one and
has $V'_{\le x, 0} = \der(V_{\le x, 1})$.
Also, let $W_0$ be a complement to $\der (V_{\le x, 1})$ in $V_0$. Then it is easy to see
that $\alg (V_{\le x}) \simeq \Sym W_0 \otimes S (V'_{\le x}) / \ideal_{\le x} (V'_{\le x})$ as
differential graded algebras. Therefore, it is enough to show the existence of
an $L$-homotopy on $\alg (V'_{\le x}) = S (V'_{\le x}) / \ideal_{\le x} (V'_{\le x})$.
Let $d$ be an $L$-homotopy for $V_{\le x}$.
From $d$ we can construct a (super-)derivation $\tilde d$ of
$S(V'_{\le x})$. Consider the alternative grading on this algebra obtained by
assigning degree one to every element of $V'_{\le x}$.
Then clearly $\tilde\der$ and $\tilde d$ preserve this grading,
and on the degree $n$ part $S (V'_{\le x})^{(n)}$ of $S (V'_{\le x})$ we have
$\tilde d \tilde\der + \tilde \der \tilde d = n \id$.
Setting $d' = n^{-1} \tilde d$ on $S (V'_{\le x})^{(n)}$, $n \ge 1$, we therefore obtain
a contracting homotopy for $\tilde\der$ on $S (V'_{\le x})$ (which is only defined on $\im\tilde\der_1$ at
the $0$-th position).
Observe that by \eqref{nozliga} we have $\ideal_{\le x}=I_{\le x}+
\tilde\der(I_{\le x})$.
Since $d$ is an $L$-homotopy, we conclude from Lemma \ref{deptriv}
that $\tilde d$ and $d'$ preserve $\ideal_{\le x}$.
It follows that $d'$ defines an $L$-homotopy for $\alg(V'_{\le x})$.
\end{proof}

We have the following immediate application to hyperplane arrangements.

\begin{corollary} \label{CorDefideals}
Suppose that $L$ is the intersection lattice of a hyperplane
arrangement $\arr$ in a vector space $U^*$. Assume that the relation complex
$V$ of $\arr$ is \admis. Then the minimal complex associated to the defining ideals
of $\arr$ (cf. \S \ref{defideals}) is exact.
\end{corollary}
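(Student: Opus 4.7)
The plan is to reduce the corollary to a direct application of Proposition \ref{main2} to the relation complex $V$ of $\arr$. First I would unpack the definitions from \S\ref{exmplatomic}: by construction, the relation complex is the minimal complex associated to the defining atomic datum $\data = (U,(H^\perp)_{H\in\arr})$, so $V_0 = U$, and for each atom $a$ of $L$ (identified with a hyperplane $H\in\arr$) we have $V_a = H^\perp$ with $\der_1|_{V_a}$ equal to the defining inclusion $H^\perp \hookrightarrow U$. In particular, these restrictions are all injective.

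Next, the hypothesis that the relation complex $V$ is \admis\ is precisely what is needed to invoke the ``in particular'' clause of Proposition \ref{main2}. Applying that clause yields the exactness of
\[
\Orlik\bigl(\Sym V_0,\,(\gen{\der V_a}{V_0})_{a\in\atoms(L)}\bigr) \simeq \alg(V).
\]
Substituting $V_0 = U$ and $\der V_a = H^\perp$, the left-hand side becomes
\[
\Orlik\bigl(\Sym U,\,(\gen{H^\perp}{U})_{H\in\arr}\bigr),
\]
which by definition (cf.~\S\ref{defideals}) is the minimal complex associated to the defining ideals of $\arr$.

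The only real task is this bookkeeping: checking that the image $\der V_a$ coincides with $H^\perp$ and that the atomic datum output by Proposition \ref{main2} agrees on the nose with the defining ideals datum. I do not anticipate any serious obstacle, since the substantive content---namely, the passage from an $L$-homotopy on $V$ to an $L$-homotopy on the generalized Orlik-Solomon algebra $\alg(V)$, and the verification that the differential ideal $\ideal(L,V)$ is preserved by this homotopy---has already been accomplished in the proof of Proposition \ref{main2}.
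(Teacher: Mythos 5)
Your argument is exactly the paper's intended proof: the corollary is stated immediately after Proposition~\ref{main2} as an ``immediate application,'' and the reduction you describe---taking $V$ to be the relation complex, noting $V_0=U$, $V_a=H^\perp$ with $\der_1|_{V_a}$ the inclusion, and invoking the ``in particular'' clause of Proposition~\ref{main2}---is precisely what the authors have in mind. The bookkeeping you flag (that $\der V_a = H^\perp$ and that the resulting atomic datum is the defining ideals datum) is correct and unproblematic.
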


Further cases can be derived by applying the truncation maps of \S\ref{truncation}.

\begin{lemma} \label{LemmaTruncAdm}
Let $V$ be a \admis\ $L$-graded complex, and $l: L_{\ge k} \to \Lambda$
a non-degenerate $k$-th truncation map. Then the truncated shifted complex
$V^{(k)} = (V_{i+k})_{i \ge 0}$ with the natural $\Lambda$-grading
$V^{(k)}_\lambda := \dsum_{x \in L: l(x) = \lambda} V_x$, $\lambda \in \Lambda$, is
a \admis\ $\Lambda$-graded complex.
\end{lemma}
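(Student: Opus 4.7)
The plan is to reduce the $\Lambda$-contractibility of $V^{(k)}$ to the $L$-contractibility of the subcomplexes $V_{\le x_i}$ of $V$, via the decomposition of Lemma~\ref{trunclemma}. I would proceed in two main steps: first verify that the proposed $\Lambda$-grading on $V^{(k)}$ is compatible with the differential, and then decompose each $V^{(k)}_{\le\lambda}$ as a direct sum of $\der$-stable subcomplexes and transport an $L$-homotopy on each $V_{\le x_i}$ to a $\Lambda$-homotopy on the corresponding summand.

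Compatibility of the $\Lambda$-grading is essentially tautological: the rank-shifting property $\rk_\Lambda l(x)=\rk_L(x)-k$ identifies $V^{(k)}_i=V_{i+k}$ with $\dsum_{\lambda\in\Lambda_i}V^{(k)}_\lambda$. For $\lambda\ne 0_\Lambda$ and $x\in L$ with $l(x)=\lambda$, any $y\prec_L x$ has $\rk_L(y)\ge k$, hence $l(y)$ is defined; monotonicity of $l$ together with the rank formula forces $l(y)\prec_\Lambda\lambda$, and therefore $\der V^{(k)}_\lambda\subset V^{(k)}_{\prec\lambda}$.

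Next, fix $\lambda\in\Lambda$ and let $x_1,\ldots,x_m\in L_{\ge k}$ be the maximal elements with $l(x_i)\le_\Lambda\lambda$. Lemma~\ref{trunclemma} yields the disjoint decomposition
\[
V^{(k)}_{\le\lambda}=\dsum_{i=1}^m V^{(k)}_{\le x_i},
\qquad
V^{(k)}_{\le x_i}:=\dsum_{y\in L_{\le x_i}\cap L_{\ge k}}V_y,
\]
into $\der$-stable subcomplexes (since $y'\prec_L y\le x_i$ with $\rk_L(y)\ge k+1$ forces $y'\le x_i$ and $\rk_L(y')\ge k$). Given an $L$-homotopy $d$ for $V_{\le x_i}$ supplied by the \admis\ hypothesis, I would define the candidate $\Lambda$-homotopy $d'$ on $V^{(k)}_{\le x_i}$ by setting $d'_j:=d_{j+k}$ for $j\ge 1$ and $d'_0:=d_k\rest_{\im\der^{(k)}_1}$. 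The contraction relation $d'_{j-1}\der^{(k)}_j+\der^{(k)}_{j+1}d'_j=\id_{V^{(k)}_j}$ for $j\ge 1$ is simply the original $L$-homotopy relation at level $j+k$. For the grading condition, if $y\in L_{\le x_i}$ with $\rk_L(y)\ge k+1$, any $y'\succ_L y$ lies in $L_{\le x_i}\cap L_{\ge k+2}$, and monotonicity of $l$ combined with the rank formula gives $l(y')\succ_\Lambda l(y)$; the $L$-grading condition on $d$ therefore passes to the required $\Lambda$-grading condition on $d'$. Summing these $d'$ over $i$ produces a $\Lambda$-homotopy on $V^{(k)}_{\le\lambda}$.

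The only conceptually delicate point is aligning indices at the bottom of the shifted complex: the original full map $d_k:V_k\to V_{k+1}$ becomes $d'_0$, which is required only on $\im\der^{(k)}_1=\im\der_{k+1}$, and the $\Lambda$-homotopy relation at level $1$ is precisely the original $L$-homotopy relation at level $k+1$. Once this indexing is set up correctly, all remaining verifications reduce to routine rank bookkeeping using the two defining properties of the non-degenerate truncation map $l$.
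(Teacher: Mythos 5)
Your proposal is correct and takes essentially the same route as the paper: both proofs apply Lemma~\ref{trunclemma} to decompose $V^{(k)}_{\le\lambda}$ into $\der$-stable pieces indexed by the maximal $x_i$, and then transport the given $L$-homotopies on $V_{\le x_i}$ down to a $\Lambda$-homotopy by summing. You spell out the verification of the $\Lambda$-compatibility of the grading and the homotopy relations somewhat more explicitly than the paper does, and you correctly observe that the rank-$0$ pieces match exactly (since by Lemma~\ref{trunclemma} every $y\in L_k$ lies below a unique $x_i$), whereas the paper hedges and only claims an inclusion there — but this difference is immaterial for the argument, since only the values of $d'_0$ on $\im\der^{(k)}_1$ are needed.
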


\begin{proof}
By our assumption on $l$, the complex $V^{(k)}$ is compatibly $\Lambda$-graded.
%\Erez{Elaborate?}
Let $\tilde x \in \Lambda$ and let $x_1,\dots,x_m$ be the maximal elements of $L$
with $l (x) \le_\Lambda \tilde x$.
It follows from Lemma \ref{trunclemma} that the obvious map of complexes
\[
\dsum_{i=1}^m V^{(k)}_{\le x_i} \longrightarrow V^{(k)}_{\le \tilde x}
\]
is an isomorphism except at the lowest point (corresponding to $V_k$),
where we only have an inclusion of the left-hand side into the right-hand
side. It follows that if $d^i$ is an $L$-homotopy for $V_{\le x_i}$
then $d = \oplus_{i=1}^m d^i$ gives an $L$-homotopy of $V^{(k)}_{\le \tilde x}$.
\end{proof}

In the case of hyperplane arrangements and relation spaces, Lemma \ref{LemmaTruncAdm} and Proposition \ref{main2}
imply the following result.

\begin{corollary} \label{CorProj}
Let $\arr$ be a hyperplane arrangement in the space $U^*$ and $V$ the
relation complex of $\arr$. Let $P$ be a subspace of $U$ of codimension $k$ in general position
with respect to $\arr$ (in the sense of Definition \ref{generalposition}).
Let $\Lambda$ be the intersection lattice
of the hyperplane arrangement in $P^*$ given by the hyperplanes $\pi_P (x) \subset P^*$, $x \in L_{k+1}$,
and $l: L_{\ge k} \to \Lambda$ the associated truncation map.
If $V$ is \admis, then the minimal $\Lambda$-graded complex
$\Orlik (\Sym V_k, (\gen{\der V_{l^{-1} (a)}}{V_k})_{a \in \atoms (\Lambda)})$ is exact.
\end{corollary}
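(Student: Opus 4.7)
The plan is to combine Lemma \ref{LemmaTruncAdm} and Proposition \ref{main2} in a more or less formal manner. By the general position assumption, and as observed in the remark following Definition \ref{generalposition}, the map $l: L_{\ge k} \to \Lambda$ is a non-degenerate $k$-th truncation map that restricts to a bijection $L_{k+1} \to \atoms(\Lambda)$. Hence Lemma \ref{LemmaTruncAdm} applies to the \admis\ $L$-graded relation complex $V$, yielding that the truncated shifted complex $V^{(k)} = (V_{i+k})_{i \ge 0}$, with the $\Lambda$-grading $V^{(k)}_\lambda = \dsum_{x \in L_{\ge k}, l(x) = \lambda} V_x$, is a \admis\ $\Lambda$-graded complex of $K$-vector spaces. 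Note that $V^{(k)}_0 = V_k$ and that, using the bijection $l: L_{k+1} \to \atoms(\Lambda)$, we have $V^{(k)}_a = V_{l^{-1}(a)}$ for each $a \in \atoms(\Lambda)$.

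Next I would apply Proposition \ref{main2} to the \admis\ $\Lambda$-graded complex $V^{(k)}$. For this, I need to check the hypothesis that $\der^{(k)}_1$ is injective on each $V^{(k)}_a$, $a \in \atoms(\Lambda)$. But by Definition \ref{defminimalcomplex}, the differential $\der_x$ of any minimal complex, and in particular of the relation complex $V$, is by construction the inclusion $V_x \hookrightarrow V_{\prec x}$; thus $\der_{l^{-1}(a)}: V_{l^{-1}(a)} \to V_k$ is injective for each $a \in \atoms(\Lambda)$, as required. Proposition \ref{main2} then yields that the generalized Orlik-Solomon algebra $\alg(\Lambda, V^{(k)})$ is \admis, and in particular exact, and provides an isomorphism of $\Lambda$-graded complexes
\[
\Orlik\bigl(\Sym V^{(k)}_0, (\gen{\der V^{(k)}_a}{V^{(k)}_0})_{a \in \atoms(\Lambda)}\bigr) \simeq \alg(\Lambda, V^{(k)}).
\]
Substituting $V^{(k)}_0 = V_k$ and $V^{(k)}_a = V_{l^{-1}(a)}$ gives precisely the complex in the statement of the corollary.

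The argument is thus essentially a bookkeeping exercise. If there is a subtle point, it is twofold: first, verifying that the general position condition is exactly what is needed to put us in the framework of Lemma \ref{LemmaTruncAdm} (non-degenerate truncation) while simultaneously ensuring that the atoms of $\Lambda$ are parametrized bijectively by $L_{k+1}$, so that the notation $V_{l^{-1}(a)}$ is unambiguous; and second, confirming that the injectivity hypothesis of Proposition \ref{main2} transports correctly to $V^{(k)}$ from the original minimal complex $V$. Both verifications, however, are immediate from the definitions.
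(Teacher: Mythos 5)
Your proof is correct and takes essentially the same approach as the paper, which states the corollary as a direct consequence of Lemma \ref{LemmaTruncAdm} and Proposition \ref{main2}. Your explicit verification that $\der^{(k)}_1|_{V^{(k)}_a}$ is injective (via the definition of the minimal complex) and that the general position hypothesis gives the bijection $L_{k+1}\to\atoms(\Lambda)$ needed to make $V_{l^{-1}(a)}$ meaningful are routine details the paper leaves to the reader.
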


Recall here that because of the general position assumption on $P$ the truncation map $l$ gives a bijection between $L_{k+1}$ and $\atoms (\Lambda)$.

%We will be interested in the case where $V$ is the relation
%space of a line configuration. In this case we have the following.
%\begin{lemma}
%Suppose that $(U_0,(U_a))$ is a line configuration and let
%$V=\Orlik(U_0,(U_a),L)$.
%For any $\tilde a\in \tilde L_1=L_2$ let $\tilde V_{\tilde a}$
%be the ideal of $S_0$ generated by $V_{\tilde a}$.
%Then $\Alg(V)\simeq\Orlik(S_0,(V_{\tilde a}),\tilde L)$.
%\end{lemma}

%\begin{proof}
%Note that
%$S_{\tilde a}=V_{\tilde a}\otimes S_0$ while $\ideal_a$ is the
%image of $\wedge^2 V_{\tilde a}\otimes S_0\rightarrow V_{\tilde a}\otimes S_0$.
%is the universal property OK?
%\end{proof}

%Alternatively, we could define $\Alg(\phi)$ as a subobject of $\Orlik(L)\otimes M$
%as follows.
%Set $\Alg_0(\phi)=\Orlik_0\otimes M$,
%$\Alg_a(\phi)=\Orlik_a\otimes M_a$ for all $a\in\atoms(L)$, and $\Alg_1(\phi)=\dsum_{a\in\atoms(L)}\Alg_a(\phi)$.
%In the induction step we set
%\begin{gather*}
%\Alg_x(\phi)=\{f\in\Orlik_x\otimes M:df\in\Alg_{i-1}(\phi)\},\ \ \ x\in L_i,\\
%\Alg_i(\phi)=\dsum_{x\in L_i}\Alg_x(\phi).
%\end{gather*}
%(This is for $\Orlik$ over $\Z$)

\section{An exact sequence of Bernstein-Lunts type} \label{zonotope}

We now turn to hyperplane arrangements over the real numbers and the
associated cone decompositions (polyhedral fans) together with their dual polytopes. Our main objects of study are certain
modules defined by the underlying graphs of these polytopes.

\subsection{Hyperplane arrangements and zonotopes}
First, we quickly review the duality between hyperplane arrangements and zonotopes (cf.~\cite[Ch.~7]{MR1311028}
for more details).

Let $U$ be an $n$-dimensional real vector space, $U^*$ its dual space, and $\arr$ a hyperplane arrangement in $U^*$
with intersection lattice $L$.
% The rank of $A$ is by definition the dimension of the linear span of $S$.
% There are two natural combinatorial objects associated to $A$. First, there is the intersection
% lattice of $A$, namely the set of all intersections $\cap_{i \in S} H_i \subset V^*$,
% $S \subset \{ 1, \ldots, n \}$, with respect to opposite inclusion. Second,
Then $\arr$ induces a decomposition of the space $U^*$ into convex polyhedral cones,
namely the closures of the
connected components of $x \setminus \cup_{H\in\arr: x \not\subset H} (x \cap H)$ for $x\in L$.
The set $\Sigma (\arr)$ of these cones forms a lattice with the partial order given by
reverse inclusion, and
the relative interiors of the cones in $\Sigma (\arr)$ form a partition of the set $U^*$.
For any $0 \le k \le n$ we write $\Sigma_k (\arr)$ for the subset of cones
of codimension $k$.
There is a natural lattice map $\Sigma (\arr) \to L$ which associates to each cone $C$ its linear span inside
the vector space $U^*$.
Dually, we consider the zonotope (Minkowski sum of line segments)
$\zono = \sum_{H \in \arr} [-1,1] u_H \subset U$, where $u_H$ is an arbitrary non-zero vector
in the one-dimensional space $H^\perp \subset U$. Clearly, $\zono$ is a convex polytope in the subspace $U'$ of $U$ spanned by
the lines $H^\perp$, $H \in \arr$.
Since the vectors $u_H$ are only unique up to scalar multiplication, the zonotope
$\zono$ is \emph{not} determined by the hyperplane arrangement $\arr$, even up to affine equivalence. However, $\zono$ is evidently determined up to combinatorial equivalence
by $\arr$.
%In the following we will only use the combinatorial structure of $\zono$, and are therefore free to disregard that it is not unique.

By mapping a cone $C \in \Sigma (\arr)$ to the face
\[
F = \{ u \in \zono: \sprod{c}{\cdot} \text{ attains its maximum value
on $\zono$ at $u$} \}
\]
of $\zono$, where $c$ is an arbitrary vector in the relative interior of $C$, we obtain a lattice isomorphism
between $\Sigma (\arr)$ and the face lattice of $\zono$.
Under this isomorphism the dimensions of $C$ and $F$ satisfy $\dim C+\dim F=n$.
The induced lattice map from the faces of $\zono$ to the intersection lattice $L$
associates to a face $F$ the space $x=x (F) = (F+(-F))^\perp \in L$, i.e., the annihilator in $U^*$ of the vector part of $F$.
We say that $F$ is of type $x$ in this case. Thus, two faces are of the same type if and only if their affine hulls are parallel.
We regard the $1$-skeleton of $\zono$ as a graph where each edge is labeled
by the corresponding atom of $L$.

Let $\facets_k$ be the set of $k$-dimensional faces of $\zono$, $k=-1,\dots,n$, where
we set $\facets_{-1}=\{\emptyset\}$. By the above, for any $k \ge 0$ the set $\facets_k$ is in bijection with
the set $\Sigma_k (\arr)$.
For any $x\in L$ the image $\zono_{\ge x}$ of $\zono$ under the projection modulo $x^\perp$ is a zonotope dual
to the restricted arrangement $\arr_{\ge x}$ in the space $x$. For $i = \rk (x), \dots, n$ the faces of $\zono_{\ge x}$ of dimension
$i-\rk (x)$ are the projections of the $i$-dimensional faces of $\zono$ of type $\ge x$. We denote the set of
these faces by $\facets_i^{\ge x}$.
% We also denote by $\faces(F)$ the facets of a face $F$ (considered as a polytope by itself).
% By convention $\faces(F)=\{\emptyset\}$ for a vertex.

\begin{remark}
For crystallographic Coxeter arrangements (and their restrictions) the combinatorial structure of the
polyhedral fans $\Sigma (\arr)$ and zonotopes $\zono$ is parallel to that
of parabolic subgroups of reductive algebraic groups (cf.~\cite[\S 2.4]{MR2811597}).
If one chooses the roots for the vectors $u_H$ then one obtains a zonotope $\zono$
which is symmetric under the corresponding Weyl group. The particular case of the root system of type $A_n$ yields the well-known
regular permutahedron (cf.~\S \ref{higherbruhat} below, \cite[pp. 17-18, 200]{MR1311028}, \cite{MR2487491}).
\end{remark}

\subsection{Graphical modules and Euler-Poincar\'e complexes} \label{graphicalmodules}
Let $R$ be a ring with $1$.
Consider the following abstract construction: given a graph $\Gamma = (V,E)$, an $R$-module $M_0$
and for each edge $e$ of $\Gamma$ a submodule $M_e$ of $M_0$, let $\module$ be the $R$-module of all $M_0$-valued
functions on the vertex set $V$ of $\Gamma$ satisfying congruence conditions modulo $M_e$ along the edges, i.e.
\[
\module = \{ m: V \to M_0:  m (v) - m (v') \in M_{e} \text{ for all }
e = \{ v, v' \} \in E\}.
\]
This construction has been studied by Guillemin and Zara \cite{MR1701922, MR1823050, MR1959894} motivated by
applications to equivariant cohomology (cf.~\cite{MR1489894}).

We will only study the case where $\Gamma$ is the $1$-skeleton of a zonotope $\zono$ and $M_e=M_{e'}$ for parallel edges $e$ and $e'$ of $\zono$.
Until the end of \S \ref{graphicalmodules} let $\arr$ be a hyperplane arrangement and let $\zono$ be a dual zonotope as above.
\begin{definition}
Let $\data=(M_0,(M_a))$ be an atomic datum of $R$-modules with respect to the intersection lattice $L$ of $\arr$.
The \emph{graphical module} of the zonotope $\zono$ with respect to the atomic datum $\data$ is
\[
\module=\module(\zono,\data) = \{ m: \facets_0 \to M_0:  m (v) - m (v') \in M_{x(e)} \text{ for }
v, v' \prec e \in \facets_1\}.
\]
\end{definition}

An important motivating example will be given in Remark \ref{PiecewisePolynomials} below.
In \S \ref{SubsectionModules} we treat some new cases of this construction.

Following the work of Bernstein-Lunts,
we consider a complex relating the graphical module $\module(\zono,\data)$ to the entire face lattice of $\zono$.
For this purpose recall the construction of
the \emph{Euler-Poincar\'e complex} of the zonotope $\zono$ with coefficients in an $R$-module $M_0$. It is defined as follows.
Fix an orientation of the real vector space $U$. Set
\[
(EP): 0\rightarrow M_0^{\facets_{-1}}\xrightarrow{\bnd_0}M_0^{\facets_0}\xrightarrow
{\bnd_1}M_0^{\facets_1}\xrightarrow{\bnd_2}\dots\xrightarrow{\bnd_n}
M_0^{\facets_n} \rightarrow 0,
\]
where the boundary maps are
\[
\bnd_j((m_{F'})_{F'\in\facets_{j-1}})_F=\sum_{F' \prec F}\sign (F',F) m_{F'}, \quad F\in\facets_j,
\]
for $j=0,\dots,n$, and the signs $\sign (F',F) \in \{ \pm 1 \}$ are determined by the chosen orientation of $U$.
It is well-known that this complex is exact.

We now define a subcomplex $M_{\data}^\zono$ of the Euler-Poincar\'e complex $EP$, which is graded by the face lattice of $\zono$
in ranks at least one and whose rank zero piece is the graphical module $\module (\zono,\data)$ instead of $M_0^{\facets_0}$.

\begin{definition}
Let $\arr$ be a hyperplane arrangement and $\zono$ be a dual zonotope.
Let $\data=(M_0,(M_a))$ be an atomic datum of $R$-modules with respect to the intersection lattice $L$ of $\arr$.
For any face $F$ of $\zono$ set
\begin{equation} \label{def: MF}
M_F=M^{x(F)}= \sum_{a\in\atoms(L):a \le x(F)} M_a \subset M_0.
\end{equation}
Then the \emph{modified Euler-Poincar\'e complex} of $\zono$ with respect to the atomic datum $\data$ is given by
\begin{multline*}
(M_{\data}^\zono):
0\rightarrow M_0\xrightarrow{\bnd_0}\module\xrightarrow{\bnd_1}
\dsum_{F\in\facets_1}M_F\xrightarrow{\bnd_2}
\dsum_{F\in\facets_2}M_F\xrightarrow{\bnd_3}\dots\xrightarrow{\bnd_n}M_{\zono}
\rightarrow 0.
\end{multline*}
\end{definition}

Note that by the exactness of $EP$ we have a short exact sequence
\begin{equation} \label{exct1}
0\rightarrow M_0\rightarrow\module\rightarrow\Ker\bnd_2\rightarrow0,
\end{equation}
in other words, the complex $M_{\data}^\zono$ is exact at the first two places.
We are interested in the question whether $M_{\data}^\zono$ is actually exact at all places. In this
case, it provides a resolution for $\module$ in terms of the modules $M_0 / M_F$.
Indeed, by considering the cokernel of the natural inclusion of complexes $M_{\data}^\zono \hookrightarrow EP$, the exactness of
$M_{\data}^\zono$ is equivalent to the exactness of
\[
0\rightarrow \module\rightarrow
\dsum_{F\in\facets_0}M_0\rightarrow
\dsum_{F\in\facets_1}M_0 / M_F\rightarrow
\dsum_{F\in\facets_2}M_0 / M_F\rightarrow\dots\rightarrow M_0 / M_{\zono}
\rightarrow 0.
\]

The following criterion allows us to reduce the problem of the exactness of the modified Euler-Poincar\'e complex to the
exactness of the minimal complexes $\Orlik (\data,L)$ for the intersection lattice $L$, which were studied in the previous part
of our paper.

\begin{proposition} \label{main}
Let $\data$ be an atomic datum of $R$-modules with respect to the intersection lattice $L$ of a hyperplane arrangement $\arr$ with
dual zonotope $\zono$.
Let $N=\Orlik(\data,L)$ with differential $\der$ be the minimal complex associated to $\data$, and suppose that for every $x\in L$ the complex
$(N_{\le x},\der)$ is exact.
Then the modified Euler-Poincar\'e complex $M_{\data}^\zono$ is exact.
\end{proposition}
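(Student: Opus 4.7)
The plan is to build a first-quadrant double complex that combines the minimal complexes $N_{\le x(F)}$ with the face lattice of $\zono$ and to compare its two spectral sequences. One of them will collapse, via the exactness hypothesis on $N_{\le\cdot}$, to the complex equivalent to $M_\data^\zono$ described just before the statement; the other will collapse, via the Euler--Poincar\'e complexes of the projected zonotopes $\zono_{\ge y}$, onto a single total degree, forcing the higher cohomology to vanish. Concretely, for $0\le q\le p$ I set
\[
C^{p,q}=\bigoplus_{F\in\facets_p}(N_{\le x(F)})_q=\bigoplus_{y\in L_q}\ \bigoplus_{F\in\facets_p^{\ge y}}N_y,
\]
with horizontal differential $d_h\colon C^{p,q}\to C^{p+1,q}$ given by the zonotope coboundary together with the natural inclusions $N_{\le x(F),q}\hookrightarrow N_{\le x(F'),q}$ for $F\prec F'$, and vertical differential $d_v\colon C^{p,q}\to C^{p,q-1}$ equal to the Orlik--Solomon differential $\der$. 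These commute, so $d=d_h+(-1)^pd_v$ turns $\mathrm{Tot}^t=\bigoplus_{p-q=t}C^{p,q}$ into a complex.

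Taking vertical cohomology first, the acyclicity of each $N_{\le x(F)}$ collapses $E_1$ onto the row $q=0$, where $E_1^{p,0}=\bigoplus_{F\in\facets_p}M_0/M_F$ (with the convention $M_F=0$ for vertices). The induced $d_1$ is precisely the complex
\[
\widetilde{EP}^\bullet\colon M_0^{\facets_0}\to\bigoplus_{F\in\facets_1}M_0/M_F\to\dots\to M_0/M_\zono\to 0
\]
from the excerpt, well defined since $M_F\subseteq M_{F'}$ whenever $F\prec F'$; the higher $d_r$ must vanish as they would leave the unique non-zero row. Thus $H^t(\mathrm{Tot})=H^t(\widetilde{EP}^\bullet)$.

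Taking horizontal cohomology first and grouping by $y\in L_q$, the row complex attached to a fixed $y$ is $\bigoplus_p\bigoplus_{F\in\facets_p^{\ge y}}N_y$, which under the projection bijection recalled in \S\ref{zonotope} between $\facets_p^{\ge y}$ and the $(p-\rk y)$-dimensional faces of $\zono_{\ge y}$ is the Euler--Poincar\'e complex of $\zono_{\ge y}$ with coefficients in $N_y$, truncated by removing the term $N_y^{\facets_{-1}}=N_y$. Its cohomology is therefore $N_y$ concentrated at the vertex level $p=\rk y$, so $E_1^{p,q}$ vanishes off the diagonal $p=q$ and equals $\bigoplus_{y\in L_q}N_y$ on it; the whole $E_1$ page then lies in the single total degree $t=0$, and every higher $d_r$ must vanish for bidegree reasons. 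Hence $H^t(\mathrm{Tot})=0$ for $t\ne 0$, which combined with the first computation gives $H^t(\widetilde{EP}^\bullet)=0$ for $t\ge 1$; together with \eqref{exct1} this is precisely the exactness of $M_\data^\zono$. The two points that I expect to require real care are choosing signs so that $d^2=0$ and checking that the zonotope coboundary, restricted to faces of type $\ge y$, truly agrees (up to orientation) with the Euler--Poincar\'e boundary of $\zono_{\ge y}$; these are the places where the combinatorics set up in \S\ref{zonotope} enter essentially.
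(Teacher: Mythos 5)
Your proof is correct and takes essentially the same approach as the paper: the paper uses the same double complex built from the minimal complexes $N_{\le x(F)}$ over the faces of $\zono$, with the same two exactness inputs (acyclicity of each $N_{\le x(F)}$ for the columns, and exactness of the Euler--Poincar\'e complexes of the projected zonotopes $\zono_{\ge y}$ for the rows), but carries out the degeneration by an explicit descending induction on the shifted kernels $D_i = \Ker\der_i|_{EP_i}[-1]$ of the vertical differential $\der_i\colon EP_i\to EP_{i-1}[-1]$ rather than by invoking the two spectral sequences. The two points you flag as needing care (signs, and the identification of the restricted coboundary with that of $\zono_{\ge y}$) are equally implicit in the paper's argument.
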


\begin{proof}
In the course of the proof we will only consider complexes indexed by the non-negative integers.
For any $i=0,\dots,n$ and any face $F$ of $\zono$ set
\[
N_i^F=\begin{cases}\dsum_{y\le x(F):\rk(y)=i}N_y,&i>0,\\
M_F = M^{x(F)},&i=0.\end{cases}
\]
%Also set $N_i^F=N_i^x$ for any face $F$ of type $x$ and $i=0,\dots,n$.
By assumption, for any $F$ the complex $((N_i^F),\der)$, where $\der_0=0$, is exact at all places,
i.e.~for every $F$ and $i$ we have a short exact sequence
\begin{equation} \label{Nexact}
0\rightarrow\Ker\der_i\rest_{N_i^F}\rightarrow N_i^F\xrightarrow{\der_i}
\Ker\der_{i-1}\rest_{N_{i-1}^F}\rightarrow0.
\end{equation}

For any $i = 0,\dots,n$ consider the Euler-Poincar\'e complex
of $\zono$ with coefficients in $N_i$, shifted $i$ places to the left (and truncated appropriately):
\[
\dsum_{F\in\facets_i} N_i \rightarrow
\dsum_{F\in\facets_{i+1}}N_i\rightarrow\dots
\rightarrow N_i\rightarrow0.
\]
Clearly, whenever $F \subset F'$ are faces of $\zono$, we have an inclusion $N_i^F \hookrightarrow N_i^{F'}$, and
therefore the modules $N_i^F$ define a subcomplex
\[
(EP_i): \quad \dsum_{F\in\facets_i} N^F_i \rightarrow
\dsum_{F\in\facets_{i+1}}N^F_i\rightarrow\dots
\rightarrow N^{\zono}_i\rightarrow0.
\]
The differential $\der_i$ gives a morphism of complexes
\[
\der_i: EP_i \rightarrow EP_{i-1} [-1].
\]

For $i \ge 1$ the complex $EP_i$ decomposes as a direct sum
\[
EP_i = \dsum_{y \in L_i} EP_y,
\]
where $EP_y$ is
the Euler-Poincar\'e complex of the projected zonotope $\zono_{\ge y}$ with coefficients in $N_y$,
\[
(EP_y): \quad \dsum_{F\in\facets_i^{\ge y}}N_y\rightarrow
\dsum_{F\in\facets_{i+1}^{\ge y}}N_y\rightarrow\dots
\rightarrow N_y\rightarrow0.
\]
The complexes $EP_y$ are exact, and therefore the same is true for the complexes $EP_i$, $i \ge 1$.

For $i \ge 0$ let now $D_i = \Ker \der_i\rest_{EP_i} [-1]$ be the shifted kernel of the morphism $\der_i$, i.e.
\[
(D_i): \quad \dsum_{F\in\facets_{i+1}}\Ker\der\rest_{N_i^F}\xrightarrow{\bnd_{i+2}}
\dsum_{F\in\facets_{i+2}}\Ker\der\rest_{N_i^F}\rightarrow\dots
\xrightarrow{\bnd_n}\Ker\der\rest_{N_i^\zono}\rightarrow0.
\]
By \eqref{Nexact}, we have then for every $i \ge 1$ a
short exact sequence of complexes
\begin{equation} \label{complexexact}
0\rightarrow D_i[1]\rightarrow EP_i \xrightarrow{\der_i} D_{i-1}\rightarrow0.
\end{equation}

We can now show by descending induction that for any $i=0,\dots,n-1$ the complex $D_i$
is exact. The statement is vacuous for $i=n-1$, and for the induction step we use the
short exact sequence \eqref{complexexact} to conclude the exactness of $D_{i-1}$ from the exactness
of $D_i$ and $EP_i$ for
$i = n-1, \dots, 1$.

For $i=0$ this gives the exactness of
\[
(D_0): \quad \dsum_{e\in\edges}M_e\rightarrow\dsum_{F\in\facets_2}M_F\rightarrow\dots\rightarrow M_{\zono}
\rightarrow0,
\]
i.e.~of the complex $M_{\data}^\zono [-1]$.
Together with the exactness of \eqref{exct1} we obtain the proposition.
\end{proof}

\begin{remark}
In the case $M_a=M_0$ for all $a\in\atoms(L)$ (the atomic datum with constant coefficients, cf. \S \ref{constantcoefficients}),
we can use this argument to prove the exactness
of the Euler-Poincar\'e complex $EP$ with coefficients in $M_0$
by induction on the rank
using the exactness of the Orlik-Solomon algebra (cf. \S \ref{reviewos}).
\end{remark}

%\begin{remark}
%Since the proof uses only the exactness of the Euler-Poincar\'e complex, it carries over verbatim
%to general oriented matroids. \marginpar{Explain}
%\end{remark}

\begin{remark} \label{PiecewisePolynomials}
The following special case provides a major motivation for our constructions.
Let $\data$ be given by the defining ideals of $\arr$ (cf. \S \ref{defideals}),
i.e.~$M_0=\Sym U$ and $M_H=\gen{H^\perp}{U}$ for $H\in\arr$.
Then the graphical module $\module = \module(\zono,\data)$ is the graded $\Sym U$-module (in fact algebra) of
\emph{piecewise polynomial functions} on $U^*$ with respect to $\arr$,
i.e.~the module of all (continuous) functions on $U^*$ which restrict to polynomial functions on the chambers of $\Sigma (\arr)$
(cf.~\cite{MR1013666, MR1431267}). Corollary \ref{CorDefideals} and Proposition \ref{main} together
imply that the modified Euler-Poincar\'e complex $M_{\data}^\zono$, which is a complex of graded $\Sym U$-modules, is exact if the relation complex $V$ is
\admis; in particular, this holds for restrictions of Coxeter arrangements.
More generally, by the work of Bernstein-Lunts \cite[15.7, 15.8]{MR1299527} and Brion \cite[p.~12]{MR1431267},
for any complete simplicial fan the corresponding analog of the complex $M_{\data}^\zono$ (cf.~[ibid.] for details) is exact.
Recall that restrictions of Coxeter arrangements are simplicial.
For rational fans the algebra of piecewise polynomial functions has a natural interpretation in terms of
the equivariant cohomology of toric varieties (cf.~\cite{MR1234037, MR1481477, MR1427657}).
\end{remark}

%\begin{remark} \label{subalg}
%Let $y\in L$ and consider the subarrangement
%$\HHH_{\subseteq y}=\{V_a:a\le y\}$ of $\orig{\HHH}$ in $y$.
%Its geometric lattice is $L_{\le y}$ and its atoms are $\atoms_{\le y}$.
%The embedding $y\subseteq V$ gives rise to an embedding
%$\Sym(y)$ in $\Sym(V)$.
%Taking into account the embedding of $\Orlik(L_{\le y})$ as a subalgebra
%of $\Orlik(L)$ we obtain a canonical embedding of $\Alg(\HHH_{\subseteq y})$
%as a subring of $\Alg(\HHH)$, and in fact, of $\Alg(\HHH)_{\le y}$.
%\end{remark}

%(We're interested in Coxeter arrangements and restrictions thereof.)

%\begin{remark} \label{subalg2}
%Let $y\in\orig L$ and consider the subarrangement
%$\orig{\HHH}_{\subseteq y}=\{\orig V_{\orig a}:\orig a\orig{\le}y\}$ of $\orig{\HHH}$
%whose atoms are $\orig\atoms_{\orig{\le} y}$.
%Then the Dilworth truncation of $\orig{\HHH}_{\subseteq y}$
%is $\HHH_{\subseteq y}$ with geometric lattice $L_{\le y}$ and atoms
%$\atoms_{\le y}$.
%Clearly $R(\orig{\HHH}_{\subseteq y})$ is a subalgebra of $R(\orig{\HHH})$
%and any ideal $R_a$, $a\in\atoms_{\le y}$ of $R(\orig{\HHH}_{\subseteq y})$
%with respect to $\orig{\HHH}_{\subseteq y}$
%is contained in the corresponding one of $R(\orig{\HHH})$ with respect to $\orig{\HHH}$.
%Therefore $\Alg(\orig{\HHH}_{\subseteq y})$ is canonically a subring
%of $\Alg(\orig{\HHH})$, and in fact, of $\Alg(\orig{\HHH})_{\le y}$.
%\end{remark}

\subsection{Modules defined by graphs of fiber zonotopes} \label{SubsectionModules}
We now use the projection construction of \S \ref{truncation}
to obtain a generalization of the situation of Remark \ref{PiecewisePolynomials}, in which $\zono$ is
replaced by a fiber zonotope $\zono_P$ and the algebra $\Sym U$ of polynomial functions on $U^*$
by the symmetric algebra $\Sym V_k$ of a higher piece $V_k$ of the relation complex.

We will work in the following setting.
Let $\arr$ be a hyperplane arrangement of rank $n$ in a real vector space $U^*$
with intersection lattice $L$, and $P \subset U$ a
subspace of codimension $0 \le k \le n-1$ in general position with respect to $\arr$ (in the sense of Definition \ref{generalposition}).
Let $\pi_P: U^* \to P^*$ be the projection, and
$\arr_P$ be the hyperplane arrangement of rank $n-k$ in $P^*$ given by the hyperplanes
$\pi_P (x)$, $x \in L_{k+1}$.
Let $\zono_P$ be a zonotope dual to $\arr_P$ (of dimension $n-k$), and
let $L_P$ be the intersection lattice of $\arr_P$.
%\Erez{We recall that $\zono_P$ is the fiber zonotope COMPLETE}
Via the map $x \mapsto \pi_P (x)$ the atoms of $L_P$
are in bijection with the elements of $L_{k+1}$. In the following we will use the notation $\facets_i$ introduced above to refer
to the set of faces of $\zono_P$ of dimension $i$, $i = -1, \dots, n-k$.

\begin{definition}
Let $R=\Sym V_k$ be the symmetric algebra of the vector space $V_k$
and $\data = (M_0, (M_a)_{a \in \atoms (L_P)})$ the atomic datum of graded $R$-modules defined by $M_0=R$ and
\begin{equation} \label{def: Ma}
M_a = \gen{\der V_{x(a)}}{V_k}
\end{equation}
for $a \in \atoms (L_P)$, where $x(a)$ is the unique element of $L_{k+1}$ with $\pi_P (x(a)) = a$ (see \eqref{def: gen} for the notation).
The graphical module $\module=\module(\zono_P,\data)$ is a graded $R$-module, and in fact a graded $R$-subalgebra of $R^{\facets_0}$.
We call it the \emph{$k$-th order relation algebra} of the arrangement $\arr$ (with respect to $P$).
It fits into the modified Euler-Poincar\'e complex $M_{\data}^{\zono_P}$, which is a complex of graded $R$-modules.
\end{definition}

\begin{remark}
There is a remarkable direct construction of a zonotope $\zono_P$ dual to $\arr_P$ from a given zonotope $\zono$ dual to the arrangement $\arr$.
Namely, we can take $\zono_P$
to be the \emph{fiber polytope} in the sense of \cite{MR1166643}
of the projection of $\zono$ under the quotient map $\pi^P: U \to U/P$. (This follows from considering $\zono$ as the projection
of a hypercube of dimension $|\arr|$, and using [ibid., Lemma 2.3, Theorem 4.1].)
Recall however that we may take $\zono_P$ to be any zonotope dual to the arrangement $\arr_P$.
%, and that $\zono_P$ is therefore not uniquely determined (even up to affine equivalence) by $\arr_P$.
Only the combinatorial structure of $\zono_P$, which is unique, matters for
our constructions.
\end{remark}

The following theorem is the second main result of our paper.
Recall that the \emph{Castel\-nuo\-vo-Mumford regularity} $\reg M$ of a finitely generated graded module $M$ over a polynomial ring $R$
is the minimal integer $r$ such that in a minimal graded free resolution
\[
\dots\rightarrow F_j\rightarrow\dots\rightarrow F_0\rightarrow M\rightarrow0
\]
of $M$ the module $F_i$ is generated in degrees $\le r+i$ for all $i \ge 0$
(cf.~\cite[\S 20.5]{MR1322960}). Note here that minimal free resolutions are unique up to isomorphism [ibid., Theorem 20.2].
By definition, a finitely generated graded $R$-module $M$ is generated in degrees $\le \reg M$.

\begin{theorem} \label{maintheorem}
Assume that the relation complex $V$ of $\arr$ is \admis. Let $P \subset U$ be as above and
%$\data$ be the atomic datum over $R=\Sym V_k$ consisting of $M_0=R$ and
%$M_a = \mathfrak{I} (\der V_{x(a)})$ for $a \in \atoms (L_P)$,
%where $x(a)$ is the unique element of $L_{k+1}$ with $\pi_P (x(a)) = a$.
$\module=\module(\zono_P,\data)$ be the $k$-th order relation algebra of $\arr$ with respect to $P$.
Then
\begin{enumerate}
\item The modified Euler-Poincar\'e complex $M_{\data}^{\zono_P}$ is exact.
\item The graded $\Sym V_k$-module $\module$ satisfies $\reg\module \le n-k$.
\end{enumerate}
In particular, $\module$ is generated as a graded $\Sym V_k$-module by its homogeneous elements of degree
$\le n-k$.
\end{theorem}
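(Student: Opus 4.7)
The plan for part (1) is to verify the hypothesis of Proposition \ref{main}, applied with the role of $\arr$ played by $\arr_P$ and that of $\zono$ by $\zono_P$: namely, that $\Orlik(\data, L_P)_{\le a}$ is exact for every $a \in L_P$. Part (2) will then follow from a standard bookkeeping of Castelnuovo--Mumford regularity along the resulting exact complex, together with the observation that each $M_F$ is generated by linear forms.

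For part (1), the general position assumption on $P$ ensures (as remarked after Definition \ref{generalposition}) that the projection $l \colon L_{\ge k} \to L_P$, $x \mapsto \pi_P(x)$, is a non-degenerate $k$-th truncation map and identifies $L_{k+1}$ bijectively with $\atoms(L_P)$. Since $V$ is \admis\ by assumption, Lemma \ref{LemmaTruncAdm} then shows that the truncated shifted complex $V^{(k)}$ endowed with the induced $L_P$-grading (for which $V^{(k)}_a = V_{x(a)}$ at each atom $a \in \atoms(L_P)$) is \admis. Moreover, $\der^{(k)}_1|_{V^{(k)}_a}$ is the inclusion $V_{x(a)} \hookrightarrow V_{\prec x(a)} \subset V_k$, hence injective by the very construction of the minimal complex. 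Proposition \ref{main2} applied to $V^{(k)}$ therefore identifies $\alg(V^{(k)})$ with the minimal complex
\[
\Orlik\bigl(\Sym V_k,\,(\gen{\der V_{x(a)}}{V_k})_{a \in \atoms(L_P)}\bigr) = \Orlik(\data, L_P)
\]
and furthermore asserts that this complex is \admis. In particular, $\Orlik(\data, L_P)_{\le a}$ admits an $L$-homotopy, and is therefore exact, for every $a \in L_P$, so that the hypothesis of Proposition \ref{main} is met and part (1) follows.

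For part (2), part (1) yields an exact sequence of finitely generated graded $R = \Sym V_k$-modules
\[
0 \to M_0 \to \module \to \dsum_{F \in \facets_1} M_F \to \cdots \to \dsum_{F \in \facets_{n-k-1}} M_F \to M_{\zono_P} \to 0.
\]
For any face $F$ with $\dim F \ge 1$, by \eqref{def: MF} and \eqref{def: Ma} the module $M_F = \sum_{a \le x(F)} M_a$ is an ideal of $R$ generated by the linear forms in $\sum_{a \le x(F)} \der V_{x(a)} \subset V_k$, and hence satisfies $\reg M_F \le 1$; evidently $\reg M_0 = 0$. Splitting the complex into short exact sequences $0 \to K_i \to \dsum_{F \in \facets_i} M_F \to K_{i+1} \to 0$ for $i = 1, \dots, n-k-1$, with $K_{n-k} = M_{\zono_P}$, and using the standard bound $\reg A \le \max(\reg B, \reg C + 1)$ for any short exact sequence $0 \to A \to B \to C \to 0$, descending induction yields $\reg K_i \le n-k-i+1$. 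Combining this with the short exact sequence $0 \to M_0 \to \module \to K_1 \to 0$ and $\reg B \le \max(\reg A, \reg C)$ gives $\reg \module \le \max(\reg M_0, \reg K_1) \le n-k$. The final assertion about generators is a standard consequence of the definition of Castelnuovo--Mumford regularity recalled just before the theorem.

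The main obstacle will lie in the chain of reductions in part (1): one must carefully check that the truncation-shift operation on $V$ intertwines well with the generalized Orlik--Solomon construction $\alg$, so as to produce exactly the minimal complex of the atomic datum $\data$ on $L_P$, and that the \admis\ conclusion of Proposition \ref{main2} delivers exactness on \emph{every} lower interval of $L_P$ (not merely at the top), as required by the hypothesis of Proposition \ref{main}.
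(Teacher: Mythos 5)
Your proof is correct and follows essentially the same route as the paper's: truncate the relation complex via Lemma \ref{LemmaTruncAdm}, apply Proposition \ref{main2} to identify $\alg(V^{(k)})$ with $\Orlik(\data,L_P)$ and conclude that it is \admis, then feed this into Proposition \ref{main} and finish with the regularity bookkeeping of \cite[Corollary 20.19]{MR1322960}. One point you handle more explicitly than the paper: the paper's proof cites Corollary \ref{CorProj}, which as literally stated asserts only exactness of the full minimal complex, whereas Proposition \ref{main} requires exactness of $\Orlik(\data,L_P)_{\le x}$ for every $x\in L_P$; your explicit appeal to the \admis\ conclusion of Proposition \ref{main2} supplies exactly this and closes that gap.
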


\begin{proof}
The first part follows directly from Corollary \ref{CorProj} and Proposition \ref{main}.

By \eqref{def: MF} and \eqref{def: Ma}, for any face $F$ of the zonotope $\zono_P$ the ideal $M_F$ of $R$ is generated by linear functionals.
Therefore $\reg M_F=1$ for all $F$, and the same is true for direct sums of the $M_F$.
Since the complex $M_{\data}^{\zono_P}$ is exact, we have short exact sequences
\[
0\rightarrow\Ker\bnd_i\rightarrow\dsum_{F\in\facets_{i-1}}M_F\rightarrow\Ker\bnd_{i+1}\rightarrow0,
\ \ \ i=2,\dots,n-k = \dim\zono_P.
\]
For $i=1$ we have the short exact sequence
\eqref{exct1} containing $\module$.
By the behavior of Castelnouvo-Mumford regularity in short exact sequences \cite[Corollary 20.19]{MR1322960} we obtain
\begin{gather*}
\reg(\module)\le\reg(\Ker\bnd_2),\\
\reg(\Ker\bnd_i)\le\reg(\Ker\bnd_{i+1})+1,\ \ i=2,\dots,n-k.
\end{gather*}
Since $\Ker \bnd_{n-k+1} = M_{\zono_P}$, we conclude that
$\reg(\module)\le\reg(M_{\zono_P})+n-k-1=n-k$.
\end{proof}

From Theorem \ref{roots} we infer:

\begin{corollary} \label{maincorollary}
The conclusion of Theorem \ref{maintheorem} holds if $\arr$ is a restriction of a Coxeter arrangement.
\end{corollary}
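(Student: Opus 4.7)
The plan is very short, since this corollary is essentially a direct application of the two main results already assembled in the paper. The hypothesis of Theorem \ref{maintheorem} is that the relation complex $V$ of $\arr$ is \admis. Thus all I need to do is exhibit this property for restrictions of Coxeter arrangements, and then invoke Theorem \ref{maintheorem} verbatim.

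First I would recall that a Coxeter arrangement is, by definition, a real reflection arrangement, so any restriction of a Coxeter arrangement is a restriction of a (complex) reflection arrangement in the sense of \S \ref{reflarr}. This is exactly the class of hyperplane arrangements treated in Theorem \ref{roots}, which asserts that any restriction of a reflection arrangement is \admis. Hence the relation complex $V$ of $\arr$ is \admis\ under the hypothesis of the corollary. (It is worth emphasizing that the preceding statement is a genuine input rather than something to be re-proved; the explicit \admiss\ section on the defining atomic datum, coming from the Coxeter-number identity \eqref{Coxsection}, is the key nontrivial ingredient and is established in Proposition \ref{Coxprop} and the proposition immediately following it.)

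With the \admiss\ property of $V$ in hand, I would simply apply Theorem \ref{maintheorem}: both the exactness of $M_{\data}^{\zono_P}$ and the regularity bound $\reg \module \le n-k$ are immediate consequences, with no additional verification needed. In particular the generation statement for $\module$ over $\Sym V_k$ by elements of degree at most $n-k$ follows at once from the definition of Castelnuovo-Mumford regularity recalled before Theorem \ref{maintheorem}.

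There is essentially no obstacle here: the corollary is a packaging of Theorems \ref{roots} and \ref{maintheorem}. The only point one might wish to double-check is that the general position hypothesis on $P$ (Definition \ref{generalposition}) required to define $\zono_P$ and $\module$ is built into the statement of Theorem \ref{maintheorem} itself and is not affected by the restriction to the Coxeter case; once that is noted, the proof reduces to a single sentence of the form ``by Theorem \ref{roots} the relation complex of $\arr$ is \admis, so Theorem \ref{maintheorem} applies.''
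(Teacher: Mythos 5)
Your proposal is correct and is precisely the paper's own argument: the paper introduces the corollary with ``From Theorem \ref{roots} we infer,'' i.e.\ it combines Theorem \ref{roots} (restrictions of reflection arrangements are \admis) with Theorem \ref{maintheorem} exactly as you do. Nothing further is needed.
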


\begin{remark} \label{maintheoremremark}
The bound $n-k$ on the degrees of the generators of $\module$ is easy to prove directly from the definition
as long as $n-k \le 2$, i.e.~when the graph of $\zono_P$ has only one edge ($k=n-1$) or forms a circuit ($k=n-2$).
The bound is sharp in the case $k=0$ (the case of
Remark \ref{PiecewisePolynomials}), as shown by the precise description of the $R$-module
structure of
the algebra of piecewise polynomial functions $\module$ given in \cite[p.~12]{MR1431267} (for simplicial arrangements $\arr$). Namely, in this case $\module$ is a free
$R$-module with $h_i$ generators in degrees $i=0,\dots,n$, where
$(h_i)$ is the $h$-vector of $\zono$, i.e.,
$h_i = \sum_{j=i}^n (-1)^{j-i} {j \choose i} \card{\facets_j}$. In particular, we have $h_n = 1$.
%$\sum_{i=0}^d h_it^{d-i}=\sum_{i=0}^d f_{i-1}(t-1)^i$ and $f_i$ is the number of $i$-dimensional faces of $\zono$ with $f_{-1}=1$
%\Erez{What's $d$?}

For $k \ge 1$ the module $\module$ is in general not a free $R$-module. Explicit examples are given by
the root arrangements of type $A_{n-1}$, $n \ge 4$, and $k=n-3$ (cf.~\S \ref{higherbruhat} below).
Here $R$ is a polynomial ring in %$\frac{n(n-1)}{2}$
$n \choose 2$ many
variables. The Hilbert-Poincar\'{e} series of $\module$ can be calculated from the resolution of
Theorem \ref{maintheorem} as
\[
\sum_{k=0}^\infty \dim \module_k \ t^k = \frac{2n t + (1-t)^{n-1}}{(1-t)^{n \choose 2}},
\]
and its numerator contains at least one monomial in $t$ with a negative coefficient.
\end{remark}

\begin{remark}
Using the fact that simplicial arrangements are $2$-formal \cite{MR894288},
it is easy to see that the conclusion of Theorem \ref{maintheorem} holds also for all simplicial arrangements of rank $n\le 4$.
% One may wonder whether this remains true for all $n$.
\end{remark}

% \section{Concluding remarks}

\begin{remark} \label{Topology}
The arrangement $\arr_P$ and the zonotope $\zono_P$ appear prominently in combinatorial constructions
describing the behavior of the polyhedral fan $\Sigma (\arr)$ induced by $\arr$ under the projection $\pi_P: U^* \to P^*$ and, dually, of the
zonotope $\zono$ under the projection $\pi^P: U \to U/P$.
The arrangement $\arr_P$ governs the combinatorics of intersections
of parallel translates of $P^\perp$ with $\Sigma (\arr)$ in the following sense:
the relative interiors of the cones of $\Sigma (\arr_P)$ are precisely the fibers of the map
$\sigma_P$ from $P^*$ to subsets of $\Sigma (\arr)$ which associates
to a vector $p \in P^*$ the set of all cones $C \in \Sigma (\arr_P)$
for which the affine space $\pi_P^{-1} (p) \subset U^*$
intersects the relative interior of $C$
(\cite[Proposition 2.2]{MR1316614}, see also \cite[Ch.~9]{MR1311028}).
By choosing for every cone of $\Sigma (\arr_P)$ an arbitrary vector in its relative interior,
we can therefore view $\sigma_P$ as a map
from $\Sigma (\arr_P)$ to subsets of $\Sigma (\arr)$.

Using the terminology introduced in \cite{MR1166643}, we may pass to the dual picture and consider
the $\pi^P$-induced subdivisions of $\pi^P (\zono)$. Here, a
\emph{$\pi^P$-induced subdivision} of $\pi^P (\zono)$ is defined as a collection $\mathcal{F}$ of faces of $\zono$
such that the images $\pi^P (F)$, $F \in \mathcal{F}$, form a polyhedral complex subdividing $\pi^P (\zono)$ and
such that the relation $\pi^P (F) \subset \pi^P (F')$ for $F, F' \in \mathcal{F}$ implies
$F = F' \cap (\pi^P)^{-1} (\pi^P (F))$ \cite[Definition 9.1]{MR1311028}.
A natural partial order on the set of all $\pi^P$-induced subdivisions of $\pi^P (\zono)$ is
given by refinement of subdivisions.
The minimal elements of this poset are called \emph{tight} $\pi^P$-induced subdivisions. They can also be characterized by
the condition $\dim F = \dim \pi^P (F)$ for all $F \in \mathcal{F}$.

The face lattice of $\zono_P$ embeds canonically as a
(in general proper) subposet
of the set of all $\pi^P$-induced subdivisions \cite[Theorem 2.1]{MR1166643}. Using the identification of the face lattices of $\zono$ and $\zono_P$
with the corresponding dual polyhedral fans $\Sigma (\arr)$ and $\Sigma (\arr_P)$, this embedding is nothing else than the map $\sigma_P$ defined above.

The set of all tight $\pi^P$-induced subdivisions
carries a natural graph structure given by $\pi^P$-flips: two tight subdivisions
are connected by an edge if and only if they are the two common refinements of some (necessarily no longer tight) $\pi^P$-induced subdivision (see \cite{MR1885221},
in particular [ibid., \S 4], for a detailed exposition). Under our
general position assumption on $P$ every $\pi^P$-flip is non-degenerate in the sense of [ibid., \S 5], i.e. the corresponding
$\pi^P$-induced subdivision $\mathcal{F}$ contains a single cell $F \in \mathcal{F}$ with $\dim F = k+1$ and $\dim \pi^P (F) = k$. We label
the corresponding edge with the element $x(F) \in L_{k+1}$.
The resulting labeling of the edges of the flip graph
by the elements of $L_{k+1}$ is compatible with the natural embedding of the labeled graph of $\zono_P$ as a subgraph.
In general, the topological structure of the poset of all $\pi^P$-induced subdivisions and of the graph
of all tight $\pi^P$-induced subdivisions of $\pi^P (\zono)$ are not sufficiently well understood for our purposes --- the determination of
the homotopy type of the subdivision poset of a polytope projection (truncated by its unique maximal element)
is in fact the object of the so-called ``generalized Baues problem.''
(See \cite{MR1410159, MR1731820, MR1910043} for some general results and examples, and \cite{MR1140664, MR1205482, MR1795510} for the
case $k=1$, where the inclusion of the truncated face lattice of $\zono_P$ into the subdivision poset can be shown to be a homotopy equivalence.)
For this reason we restrict to the graph of $\zono_P$, which is topologically as simple as possible.
A series of concrete examples illuminating these concepts will be given below in \S \ref{higherbruhat}.
\end{remark}

\section{An application and a series of examples}

\subsection{Galleries and compatible families} \label{concluding}
The special case $k=1$ of \S \ref{SubsectionModules} provided the original motivation for our paper.
By Remark \ref{Topology}, we obtain here an arrangement $\arr_P$ governing the
combinatorics of straight paths through $\Sigma (\arr)$ in a fixed direction.
Moreover,
there is an interesting connection between the first-order relation algebra $\module$ and the concept of a compatible family introduced in \cite{MR2811598}.
To give some more details,
fix a chamber $\sigma_0$ of $\Sigma(\arr)$. A \emph{gallery} is a minimal
sequence $\sigma_0, \sigma_1, \dots, \sigma_N = - \sigma_0$ of adjacent chambers of $\Sigma (\arr)$, where necessarily $N = \abs{\atoms}$.
Identifying the chambers of $\Sigma (\arr)$ with the vertices of a fixed dual zonotope $\zono$,
a gallery corresponds to the sequence of vertices of
a monotone path on $\zono$ with respect to a linear functional in the chamber $-\sigma_0$.
For a subspace $P \subset U$ of codimension one, the arrangement $\arr_P$ describes the combinatorics of the intersections
of the translates of the line $P^\perp \subset U^*$ with the
cone decomposition $\Sigma (\arr)$. In concrete terms, take the fixed chamber $\sigma_0$ to be one of the two chambers intersected by $P^\perp$.
%and note that the elements $p$ of $P^*$ are via $p \mapsto \pi_P^{-1} (p) \subset U^*$ in bijection with the translates of $P^\perp$.
Then to every
chamber $\rho$ of the cone decomposition $\Sigma (\arr_P)$ (equivalently, to every vertex of $\zono_P$) we associate the gallery of $\arr$ consisting of the
chambers of $\Sigma (\arr)$ intersected by the affine line $\pi_P^{-1} (p)$, where $p \in \rho \subset P^*$ is arbitrary.
%: it is obtained by
%\[
%\{ t \in \mathbb{R} \, : \, \mu - t p \in \sigma_i \} = ] t_i, t_{i+1} [, \quad i = 1, \dots, N,
%\]
%where $\mu \in \rho$ is arbitrary and $- \infty = t_0 < t_1 < \dots < t_N < t_{N+1} = + \infty$ is the sequence of real numbers $t$ (extended by $\pm \infty$) for which $\mu - t p$ lies in a hyperplane of $\arr$.

When two vertices $v$ and $v'$ of $\zono_P$ are connected by an edge $e$, the corresponding monotone paths on $\zono$ differ by a \emph{polygon move}
on a two-dimensional face $F$ of $\zono$ \cite{MR1731820, MR1885221}, and the associated element $x(F) \in L_2$ provides the label for the edge $e$.
If we let $\sigma_0$, \dots, $\sigma_N$ be the gallery associated to $v$, where
$\sigma_{i-1}$ and $\sigma_i$ are adjacent along the hyperplane $a_i$ of $\arr$, $i = 1, \dots, N$, then there are unique indices $1 \le i < j \le N$ such that
$\{ a_i, \dots, a_j \}$ is precisely the set of all hyperplanes of $\arr$ containing the element $x(e) \in L_2$, and the gallery associated to $v'$ is given by
\begin{equation} \label{modifiedgallery}
\sigma_0, \dots, \sigma_{i-1} = \tau_{i-1}, \tau_i, \dots, \tau_{j-1}, \sigma_j = \tau_j, \dots, \sigma_N,
\end{equation}
where the chambers $\tau_{k-1}$ and $\tau_k$ are adjacent along $a_{i+j-k}$ for $i \le k \le j$. To put it differently,
compared to the original gallery $\sigma_0$, \dots, $\sigma_N$
the hyperplanes $a_i$, \dots, $a_j$ containing $x(e)$ are
crossed in \eqref{modifiedgallery} in reverse order.

Let now $\mathcal{E}$ be an arbitrary (not necessarily commutative) finite-dimensional algebra over a field $K$ of characteristic zero.
A \emph{compatible family} $\family$ with respect to a hyperplane arrangement $\arr$ in a $K$-vector space $U^*$
is a collection of power series $\family_\sigma \in \mathcal{E} [[ U ]]$
associated to the chambers $\sigma$ of $\Sigma (\arr)$, such that
$\family_\sigma(0)=1_{\mathcal{E}}$ for all chambers $\sigma$, and
$\family_{\sigma_1\rightarrow\sigma_2}:=\family_{\sigma_1}\family_{\sigma_2}^{-1}\in
\mathcal{E}[[U_a]]$ for any two chambers $\sigma_1$, $\sigma_2$ adjacent along a hyperplane $H_a = U_a^\perp$ of $\arr$ (cf.~\cite[Definition 4.1]{MR2811598}).
Compatible families are special cases of piecewise power series with respect to $\Sigma (\arr)$.
Examples with $\mathcal{E}=K$ are given by the exponentials of piecewise linear functions on $\Sigma (\arr)$.
When $\arr$ is a restriction of a crystallographic Coxeter arrangement,
nontrivial examples for the
properly non-commutative situation
arise in the representation theory of reductive groups over local fields and adele rings (\cite[Theorem 2.1]{MR999488}, \cite[\S 2.4]{MR2811597}).

Note that for $k=1$ we have $R = \Sym V_1$ with $V_1 = \bigoplus_{a \in \atoms} U_a$. The completion of $R$
with respect to the maximal ideal generated by $V_1$ is
the power series ring $\hat{R} = K [[ V_1 ]]$.
We can view $R$ and $\hat{R}$ as a polynomial ring and a power series ring, respectively, in $N$ indeterminates which are indexed
by the set $\atoms$ of hyperplanes of $\arr$. Write $\hat{\module} = \hat{R} \module \subset \hat{R}^{\facets_0}$ for the
completion of the first-order relation algebra $\module = \module (\zono_P, \data)$.
Note that we have $\hat{\module} = \module (\zono_P, \hat{\data})$ for
$\hat{\data} = (\hat{R}, (\hat{R}\gen{\der V_{x(\alpha)}}{V_1})_{\alpha \in \atoms(L_P)})$.
For $a \in \atoms$ let
$\iota_a: \mathcal{E} [[ U_a ]] \hookrightarrow \mathcal{E} [[V_1]] = \hat{R} \otimes \mathcal{E}$ be the canonical inclusion.
Let now $\family$
be a compatible family with respect to $\Sigma(\arr)$ with values in $\mathcal{E}$.
We associate to $\family$ an element $\mu (\family) \in \hat{R}^{\facets_0} \otimes \mathcal{E}$
by setting
\[
\mu (\family) (v) = \prod_{i=1}^N \iota_{a_i} (\family_{\sigma_{i-1} \rightarrow \sigma_i}) \in \hat{R} \otimes \mathcal{E}
\]
for any $v \in \facets_0$ with corresponding gallery $\sigma_0, \dots, \sigma_N$,
where $\sigma_{i-1}$ and $\sigma_i$ are adjacent along $a_i$, $i = 1, \dots, N$.
%The transitivity relations for compatible families imply the following fact.
Since the vector $\mu (\family)$ is constructed starting from a collection of power series in $\mathcal{E} [[U]]$, it is
not surprising that it cannot be an arbitrary element of $\hat{R}^{\facets_0}$. In fact, we can
easily show that it has to satisfy the following constraints.

\begin{proposition} \label{PropositionFamilies}
For any compatible family $\family$ with values in $\mathcal{E}$, the vector
$\mu (\family)$ is an element of the
submodule $\hat{\module} \otimes \mathcal{E}$ of $\hat{R}^{\facets_0} \otimes \mathcal{E}$.
\end{proposition}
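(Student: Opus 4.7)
The plan is to verify, for every edge $e = \{v, v'\}$ of $\zono_P$, the defining congruence
\[
\mu(\family)(v) - \mu(\family)(v') \in \hat{R}\langle \der V_{x(e)} \rangle \otimes \mathcal{E},
\]
which is exactly the condition for membership in $\hat{\module} \otimes \mathcal{E} = \module(\zono_P,\hat\data)\otimes\mathcal E$. I would invoke the polygon-move description recalled just before the statement: the galleries attached to $v$ and $v'$ coincide outside a window of consecutive indices $i,\dots,j$, inside which the crossed hyperplanes $a_i,\dots,a_j$ run through the full set of hyperplanes of $\arr$ containing $x(e) \in L_2$, in one order for $v$ and in the reverse order for $v'$, between the common chambers $\sigma_{i-1}$ and $\sigma_j$. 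Since the common prefix and suffix contribute identical left and right factors in $\hat R \otimes \mathcal E$, the problem reduces to the congruence
\[
P_v := \prod_{k=i}^{j} \iota_{a_k}(\family_{\sigma_{k-1}\to\sigma_k}) \equiv \prod_{k=i}^{j} \iota_{a_{i+j-k}}(\family_{\tau_{k-1}\to\tau_k}) =: P_{v'} \pmod{\hat R\langle W \rangle \otimes \mathcal E},
\]
where $W := \der V_{x(e)}$.

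The key step is to set up an identification that lets both products be compared inside $\mathcal E[[U]]$. Both $P_v$ and $P_{v'}$ lie in the completed subring $S \otimes \mathcal{E} \subset \hat R \otimes \mathcal{E}$, where $S = \widehat{\Sym V_1^{\le x(e)}}$ and $V_1^{\le x(e)} := \bigoplus_{k=i}^j U_{a_k}$. By the very definition of the relation complex, $W$ equals the kernel of the restriction of $\der_1$ to $V_1^{\le x(e)}$, i.e.~the space of linear relations among the vectors $u_{a_i},\dots,u_{a_j}$ in $U$. Hence $\der_1$ induces an isomorphism $V_1^{\le x(e)}/W \xrightarrow{\sim} \mathrm{span}(u_{a_i},\dots,u_{a_j}) \subset U$, which extends to an identification of $(S/S\langle W\rangle) \otimes \mathcal E$ with a subring of $\mathcal E[[U]]$. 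Under this identification, the composition
\[
\mathcal E[[U_{a_k}]] \xrightarrow{\iota_{a_k}} S \otimes \mathcal E \twoheadrightarrow (S/S\langle W\rangle) \otimes \mathcal E \hookrightarrow \mathcal E[[U]]
\]
coincides with the natural inclusion induced by $U_{a_k} \hookrightarrow U$.

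Once this compatibility is in place, the conclusion follows by telescoping. Modulo $\hat R \langle W \rangle \otimes \mathcal E$, the product $P_v$ is computed directly in $\mathcal E[[U]]$ using $\family_{\sigma_{k-1}\to\sigma_k} = \family_{\sigma_{k-1}}\family_{\sigma_k}^{-1}$:
\[
P_v \equiv \prod_{k=i}^{j} \family_{\sigma_{k-1}}\family_{\sigma_k}^{-1} = \family_{\sigma_{i-1}}\family_{\sigma_j}^{-1},
\]
and similarly $P_{v'} \equiv \family_{\tau_{i-1}}\family_{\tau_j}^{-1} = \family_{\sigma_{i-1}}\family_{\sigma_j}^{-1}$ since $\tau_{i-1}=\sigma_{i-1}$ and $\tau_j=\sigma_j$. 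Thus $P_v \equiv P_{v'}$, as required.

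The main obstacle is the compatibility claim in the middle step: one must track carefully that reducing modulo $W$ in $\hat R$ converts the abstract inclusions $\iota_{a_k}$ into the geometric inclusions $U_{a_k} \hookrightarrow U$, so that the telescoping identity in $\mathcal E[[U]]$ can actually be applied to the non-commutative product of the $\iota_{a_k}(\family_{\sigma_{k-1}\to\sigma_k})$. Once this identification is in place, no further information about the combinatorics of $\arr_P$ or $\zono_P$ beyond the polygon-move description is needed, and the argument is purely a telescoping of compatible families restricted to the rank-two subarrangement $\arr_{\le x(e)}$.
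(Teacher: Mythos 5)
Your proof is correct and follows essentially the same line as the paper's: for each edge $e$ of $\zono_P$ you reduce the defining congruence to the rank-two subarrangement $\arr_{\le x(e)}$, observe that the kernel of the evaluation map $\mathcal{E}[[\dsum_{a\le x(e)} U_a]] \to \mathcal{E}[[U_{x(e)}]]$ is the two-sided ideal generated by $\der V_{x(e)}$, and telescope the compatible-family cocycle condition in that quotient. The paper phrases this slightly more directly by working with the evaluation map itself rather than constructing an explicit subring of $\mathcal{E}[[U]]$, but the argument is the same.
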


\begin{proof}
Consider a pair of vertices $v, v' \in \facets_0$ adjacent along an edge $e \in \facets_1$ with $x = x(e) \in L_2$. As above, let $\sigma_0$, \dots, $\sigma_N$
and \eqref{modifiedgallery} be the galleries associated to $v$ and $v'$, respectively.
Then we have
\[
\prod_{k=i}^j \family_{\sigma_{k-1} \rightarrow \sigma_k} = \family_{\sigma_{i-1} \rightarrow \sigma_j} = \prod_{k=i}^j \family_{\tau_{k-1} \rightarrow \tau_k}
\]
as elements of the ring $\mathcal{E} [[ U_x ]]$ of power series on the two-dimensional space $U_x^\ast$. Equivalently, the difference
\[
\prod_{k=i}^j \iota_{a_k} (\family_{\sigma_{k-1} \rightarrow \sigma_k}) - \prod_{k=i}^j \iota_{a_{i+j-k}} (\family_{\tau_{k-1} \rightarrow \tau_k})
\]
lies in the kernel of the canonical evaluation map $\mathcal{E} [[ \oplus_{a \le x} U_a ]] \to \mathcal{E} [[ U_x ]]$. But this kernel is nothing else than
the two-sided ideal of $\mathcal{E} [[ \oplus_{a \le x} U_a ]]$ generated by $\der_2 V_x$. Since the remaining parts of the galleries are identical, it follows
that $\mu (\family) (v) - \mu (\family) (v')$ lies in the ideal of $\mathcal{E} [[V_1]]$ generated by $\der_2 V_x$, which shows that
$\mu (\family) \in \hat{\module} \otimes \mathcal{E}$, as asserted.
\end{proof}

\begin{remark}
Proposition \ref{PropositionFamilies} identifies certain $K$-linear constraints
satisfied by the vectors $\mu(\family)$ for compatible families $\family$.
Let $\Lambda = \Lambda (\arr)$ be the $K$-vector space of piecewise linear functions on $\Sigma (\arr)$, i.e.~the space of
all collections of vectors $\lambda_\sigma \in U$ associated
to the chambers $\sigma$ of $\Sigma (\arr)$, such that $\lambda_{\sigma_1} - \lambda_{\sigma_2} \in U_a$ whenever $\sigma_1$ and $\sigma_2$ are adjacent along a
hyperplane $H_a$ of $\arr$. Obviously, $\Lambda$ contains the vector space $U$ of linear functions on $U^\ast$.
Then for $\lambda \in \Lambda$ the collection of power series $\family_\sigma = \exp (\lambda_\sigma) \in K [[U]]$
is a compatible family with respect to $\arr$ (for $\mathcal{E}=K$), and therefore
we obtain an element $\mu (\exp (\lambda)) \in \hat{\module}$. It is easily verified that the resulting map $\Lambda \to \module_1$ in degree one induces
a linear isomorphism between the vector spaces $\Lambda / U$ and $\module_1$. The closure of the $K$-linear span of the elements $\mu (\exp (\lambda))$
is nothing else than the completion of the subalgebra of $\module$ generated by $\module_1$. It is an interesting question whether this subalgebra actually
coincides with $\module$.
(By \cite{MR1013666}, the corresponding
statement is true for the algebra of piecewise polynomial functions on a simplicial fan.)
This would mean that the relation algebra $\module$ captures in a precise sense all $K$-linear relations satisfied by the vectors $\mu (\family)$
for compatible families $\family$.
\end{remark}

The original problem motivating this paper was to prove the alternative formula of \cite[Theorem 8.1]{MR2811598} for the canonical push-forward
$\mathcal{D}_{\Sigma(\arr)} \family$ of a compatible family $\family$,
which is defined as the constant term of the power series
\[
\sum_{\sigma \in \Sigma (\arr)} \frac{\family_\sigma}{\theta_\sigma} \in \mathcal{E} [[ U ]],
\]
where $\theta_\sigma$ denotes the suitably normalized product of the linear functionals defining the walls of $\sigma$ (see [ibid., Definition 3.3] for more details).
The formula in question expresses $\mathcal{D}_{\Sigma(\arr)} \family$ by the linear terms of the basic one-variable power series
$\family_{\sigma_1\rightarrow\sigma_2} \in
\mathcal{E}[[U_a]]$ for adjacent chambers $\sigma_1$ and $\sigma_2$.
Both the definition of the
canonical push-forward and the alternative formula are easily seen to arise by base change to $\mathcal{E}$ from explicit linear functionals on
$R_n^{\facets_0}$, the space of elements of $\hat{R}^{\facets_0}$ of degree $n$:
we can write $\mathcal{D}_{\Sigma(\arr)} \family = (\mathfrak{c} \otimes \id) (\mu (\family)_n)$ for a linear functional $\mathfrak{c}$ on the $K$-vector space
$R_n^{\facets_0}$, and similarly the alternative expression of [ibid., Theorem 8.1] can be written
as $(\mathfrak{d} \otimes \id) (\mu (\family)_n)$ with a different linear functional $\mathfrak{d} \in (R_n^{\facets_0})^\ast$.
By Proposition \ref{PropositionFamilies} we have here $\mu (\family)_n \in \module_n \otimes \mathcal{E}$.
To prove the alternative formula, it is therefore enough to show that the two
linear functionals $\mathfrak{c}$ and $\mathfrak{d}$ agree on the subspace $\module_n$ of $R_n^{\facets_0}$. By Corollary \ref{maincorollary},
we know that $\module_n = V_1 \module_{n-1}$ for restrictions of Coxeter arrangements, which
allows an inductive approach to this problem.
Namely, the computation of $\mathfrak{c} (v_1 \mu_{n-1})$ and $\mathfrak{d} (v_1 \mu_{n-1})$ for $v_1 \in V_1$
and $\mu_{n-1} \in \module_{n-1}$ can be reduced (by a ``product rule'' essentially going back to Arthur in the case of the functional $\mathfrak{c}$)
to the computation of the linear functionals $\mathfrak{c}_x (\mu_{n-1,x})$
and $\mathfrak{d}_x (\mu_{n-1,x})$ corresponding to the subarrangements $\arr_{\le x}$, $x \in L_{n-1}$, which are again restrictions of Coxeter arrangements.
We will not give more details, since the alternative proof of \cite{MR2811598} is conceptually much simpler and at the same time
more general.

\subsection{The $A_n$ case, discriminantal arrangements and higher Bruhat orders} \label{higherbruhat}
As a concrete example, we finally explicate the definition of the $k$-th order relation algebra $\module$
for the root system of type $A_{n-1}$, i.e.~the braid arrangement of rank $n-1$,
and $0 \le k \le n-2$. In this case, the basic combinatorial structure of our construction is closely connected
to the discriminantal arrangements $\mathcal{B} (n,k+1)$ and the higher Bruhat orders $B(n,k+1)$ introduced by Manin-Schechtman \cite{MR1097620}.

We first describe the hyperplane arrangements and polytopes involved in our construction.
Let $U$ be an $n$-dimensional real vector space with
basis $x_1, \ldots, x_n$. The braid arrangement $\arr$ in $U^*$ is then defined by the
roots $x_i-x_j$, $1 \le i$, $j \le n$, $i \ne j$;
their span $U^L$ is the space $U_0$ of all vectors $\sum_{i=1}^n \lambda_i x_i$
with $\sum_{i=1}^n \lambda_i = 0$.
The intersection lattice $L$ of the braid arrangement is isomorphic to the lattice of
partitions of the index set $\indexset = \{1, \dots, n\}$.
A partition of $\indexset$ represents an indecomposable element of rank $k$ in $L$
if and only if all its blocks are of size one except a single block which is of size $k+1$.
%For any $0 \le m \le n$ we denote the set of all $m$-element subsets of $\indexset$ by $C(\indexset,m)$.
Thus the set $L_{\irr,k}$ of all indecomposable elements of $L$ of a given rank $k > 0$ can and will be identified with the set
$C(\indexset,k+1)$ of all subsets of $\indexset$ of size $k+1$.
The Minkowski sum $\zono$ of the line segments $[-1,1] (x_i-x_j)$, $1 \le i < j \le n$,
is a zonotope dual to the braid arrangement $\arr$.
It is also a regular permutahedron in the space $U_0$,
namely the convex hull of the $S_n$-orbit of the vector $2 \rho = \sum_{i=1}^n (n+1-2i) x_i$.

A special feature of the root system $A_{n-1}$ is that the permutahedron $\zono$ is itself in a natural way a fiber polytope of a hypercube.
Consider the $n$-cube $C_n = \sum_{i=1}^n [-1,1] x_i$ in the space $U$ and its projection to a line segment under the map
$\pi^{U_0}: U \to U / U_0$. Then $\zono \subset U_0$ is (up to a scaling factor) just the
fiber polytope of the projection of $C_n$ by $\pi^{U_0}$ (cf.~\cite[Example 5.4]{MR1166643}, \cite[pp. 301-304]{MR1311028}).
The hyperplane arrangement dual to $C_n$ is the trivial (or Boolean) arrangement $\arr_{{\rm triv}, n}$ consisting of the $n$ hyperplanes $x_i = 0$ in $U^*$.
The space $U_0$ is in general position with respect to $\arr_{{\rm triv}, n}$, and the associated projected arrangement is the braid
arrangement $\arr$. Since the intersection lattice of $\arr_{{\rm triv}, n}$ can be identified with the Boolean algebra
$\mathfrak{P} (\indexset)$, we obtain a non-degenerate truncation map $\mathfrak{P} (\indexset)_{\ge 1} \to L$ (in fact,
this map yields an isomorphism of $L$ with the first Dilworth truncation $D_1 (\mathfrak{P} (\indexset))$, cf.~\cite[p. 302]{MR1434477}).
We recover the above identification of
$\mathfrak{P} (\indexset)_{\ge 2}$ with the set $L_{\irr}$ of all indecomposable elements of $L$.

We now consider projections $\arr_P$ of the arrangement $\arr$ (regarded as an essential arrangement in the space $U_0^*$), and
dually the fiber polytope $\zono_P$ of the projection of $\zono$ under $U_0 \to U_0 / P$
for a codimension $k$ subspace $P \subset U_0$ in general position.
In this situation, we have the sequence of projections
$U \to U /P \to U / U_0$. By \cite[Theorem 2.1]{MR1316614},
the fiber polytope $\zono'_P$ of the projection of $C_n$ via $\pi^P$ is a Minkowski summand of the zonotope $\zono_P$.
Equivalently, the dual arrangement $\arr'_P$ of $\zono'_P$ is a subarrangement of the arrangement $\arr_P$.
In fact, using [ibid., \S 2] it is easy to see that $\arr'_P$ is the
hyperplane arrangement in the space $P^*$ defined by the images
of the elements of $L_{\irr,k+1} \simeq C (\indexset,k+2)$
under $\pi_P$.
We will use the zonotope $\zono'_P$ and its dual arrangement $\arr'_P$ to describe the relation algebra $\module$.
Denote by $\facets'_0$
and $\facets'_1$ the sets of vertices (resp. edges) of $\zono'_P$.
There is obviously a canonical surjection $\facets_0 \to \facets'_0$.

More concretely, identify the elements of $U$ with their coordinate vectors with respect to the basis $(x_i)$ and
let the subspace $P \subset U_0$ be given as the nullspace of a matrix $A$ of size $k+1$ by $n$ and rank $k+1$ such that
the vector $(1, \ldots, 1)$ is a linear combination of its rows. Denote the maximal minor of $A$ associated
to $I \in C (\indexset,k+1)$ by $\alpha (I)$. Then the hyperplanes of $\arr'_P$
are defined by the vectors
\[
E_I = \sum_{\nu=0}^{k+1} (-1)^{\nu} \alpha (I \backslash \{ i_\nu \}) x_{i_\nu}, \quad
I = \{ i_0, \dots, i_{k+1} \} \in C (\indexset,k+2), \quad i_0 < \dots < i_{k+1},
\]
in the space $P$, and the fiber polytope $\zono'_P$ is (up to a scaling factor)
the Minkowski sum of the line segments $[-1,1] E_I$ \cite[Theorem 4.1]{MR1166643}.
The space $P$ is in general position with respect to the arrangement $\arr_{{\rm triv}, n}$
%given by the $n$ vectors $x_i \in U$
if and only if the minors $\alpha (I)$ for $I \in C (\indexset,k+1)$ are all non-zero (since in this case the support in $\indexset$
of $E_I$ is precisely the set $I$,
and the vectors $E_I$ are therefore pairwise non-proportional).

The labeled graph of $\zono'_P$ can be explicitly described as follows. An arbitrary subset
$\mathfrak{V}$ of $\mathfrak{P} (C (\indexset,k+2))$ has in a natural way the structure of a graph with edges labeled by the elements of $C(\indexset,k+2)$:
whenever $v \in \mathfrak{V}$, $I \notin v$, and $t' = t \cup \{ I \} \in \mathfrak{V}$, we connect $t$ and $t'$ by an edge with label $I$.
Consider the map $\nu$ from $P^* \setminus \bigcup_{H \in \arr'_P} H$ to $\mathfrak{P} (C (\indexset,k+2))$
given by
\[
\nu (\lambda) = \{ I \in C (\indexset,k+2):   \sprod{\lambda}{E_I} > 0 \}.
\]
Then the fibers of $\nu$ are precisely the interiors of the chambers of $\arr'_P$ and the set $\facets'_0$
(together with its natural structure as a labeled graph provided by $\facets'_1$)
can be identified with the image $\mathfrak{V}_P$ of $\nu$.

We now turn to the relation complex and the relation algebra. As worked out in the appendix, the relation
complex $V$ of the braid arrangement $\arr$ is given by the irrelevant ideal of the exterior algebra of the base space $U$.
For any $k \ge 0$ the space $V_k$ has a basis consisting of vectors $x_I$ indexed by the sets $I \in C(\indexset,k+1)$, and in case $k > 0$
the graded piece $V_I \subset V_k$ corresponding to such a set $I$, regarded as an element of $L_{\irr,k}$,
is the one-dimensional space spanned by $x_I$.
The ring $R = \Sym V_k$ can therefore be regarded as the polynomial ring in the variables
$x_I$, $I \in C(\indexset,k+1)$. The differential $\der: V_{k+1} \to V_k$ is given by
\[
\der x_I = \sum_{\nu=0}^{k+1} (-1)^\nu x_{I \backslash \{ i_\nu \}}, \quad
I = \{ i_0, \dots, i_{k+1} \} \in C (\indexset,k+2), \quad i_0 < \dots < i_{k+1}.
\]

%We can now give a very concrete description of the algebra $\module$.
Since the relation complex is supported
on indecomposables, we can view the elements of the relation algebra $\module$ as functions on the set $\facets'_0$ (the vertex set
of $\zono'_P$) satisfying congruence conditions defined by
the elements of $\facets'_1$.
In this way, $\module$ can be identified
with the algebra of all functions $m: \mathfrak{V}_P \to R$ such that
\[
m (v) \equiv m (v \cup \{ I \}) \pmod{\der x_{I}}
\]
for all
$v \in \mathfrak{V}_P$ and $I \in C (\indexset,k+2)$ with $v \cup \{ I \} \in \mathfrak{V}_P$.

The hyperplane arrangement $\arr'_P$ is a so-called discriminantal arrangement of type $(n,k+1)$ (\cite{MR1097620}, \cite[pp. 205--207]{MR1217488},
\cite{MR1209098, MR1456579, MR1720104}).
An interesting special case is obtained when the minors $\alpha (I)$ are all positive, which can be achieved by letting $A$
be a Vandermonde matrix $(t_j^{i-1})_{1 \le i \le k+1, \, 1 \le j \le n}$ for an increasing sequence
$t_1 < \dots < t_n$. In this case the sets $\mathfrak{V}_P$ defined above are subsets of the
purely combinatorial object $B(n,k+1)$ \cite{MR1097620, MR1217068}, which is the set of
all \emph{consistent} subsets of $C (\indexset,k+2)$ (in the sense of \cite[Lemma 2.4]{MR1217068}) with the partial order given by single-step inclusion.
%The graph of $B(n,k+1)$ is therefore naturally labeled by elements of $C(\indexset,k+2)$.
In the setting of Remark \ref{Topology},
the set $B(n,k+1)$ can be identified with the set of all tight $\pi^P$-induced subdivisions of the projection of $C_n$ under $\pi^P$,
i.e.~the set of \emph{cubical tilings} of the zonotope $\pi^P (C_n)$,
and the natural graph structure on $B (n,k+1)$ coincides with the structure induced by $\pi^P$-flips (i.e.~cube flips, cf.~\cite{MR1731820}).
The subgraphs defined by the sets $\mathfrak{V}_P$ are precisely the
zonotopal subgraphs of $B(n,k+1)$ studied by Felsner-Ziegler \cite{MR1861424}.
For $k=0$ the set $\mathfrak{V}_P$ coincides with $B(n,1)$, which can also be described as the set of inversion sets of permutations of $\indexset$,
and therefore be identified as a poset with the symmetric group $S_n$ with its weak Bruhat order.
For $k \ge 1$ the zonotopal subgraphs $\mathfrak{V}_P$ are usually proper
subgraphs of $B(n,k+1)$, and the full graphs $B(n,k+1)$ are in general not polytopal. (For example, for $k=1$ and $n \ge 6$ every graph $\mathfrak{V}_P$
is a proper subgraph of $B(n,2)$, cf.~\cite{MR1861424}.)
Note also that the full poset of all proper $\pi^P$-induced subdivisions is known to have the homotopy type of a sphere in this situation
(\cite{MR1215321}, cf.~also \cite{MR1856417}).

\appendix
\section{Relation complexes for non-exceptional root systems}
In this appendix we explicate the relation complexes for
the infinite families $A_n$, $B_n$ and $D_n$ of indecomposable Coxeter arrangements and for their restrictions.

\subsection*{The $A_n$ case}
Let $U$ be a vector space of dimension $n+1$ with basis
$e_1,\dots,e_{n+1}$ and the standard scalar product
and consider the (non-essential) reflection
arrangement of type $A_n$ given by
the vectors $e_i-e_j$, $1\le i<j\le n+1$.
(The restrictions of the arrangements $A_n$ are also of type $A$, so they do not need
to be considered separately.)
% The space spanned by these vectors
% is $U' = \{ (u_i) \in U: \sum_{i=1}^{m+1} u_i = 0 \}$.
Then the intersection lattice $L$ is the partition lattice on $\indexset = \{1,\dots,n+1\}$
with rank function $\rk(\{ I_1,\dots,I_m \})=\sum_{j=1}^m(\card{I_j}-1)$.
%\Erez{Use $(I_1,\dots,I_m)$ or $\{I_1,\dots,I_m\}$?}
%(It is also the Dilworth truncation of the subset lattice of $\{1,\dots,n+1\}$.)
Let $V$ be the irrelevant ideal of the exterior algebra $\wedge^{\bullet} U$.
As a vector space, $V$ has a basis consisting of the vectors
$e_I=e_{i_1}\cdots e_{i_k}$, $I=\{i_1,\dots,i_k\}$, $1\le i_1<\dots<i_k\le n+1$,
$k\ge1$. We also set $e_\emptyset=0$.
We grade $V$ by $\deg(e_I)=\card{I}-1$. We have the differential
\[
\der e_I=\sum_{j=1}^k(-1)^{j+1}e_{I\setminus\{i_j\}}.
\]
Then $V$ is compatibly $L$-graded by
\[
V_{\{ I_1,\dots,I_m \}}=\begin{cases}U,&\text{if $\{I_1,\dots,I_m\}=\{\{1\},\dots,\{n+1\}\}$},\\
\C e_{I_j},&\text{if there is a unique $j=1,\dots,m$
such that }\card{I_j}>1,\\
0,&\text{otherwise.}\end{cases}
\]
Multiplication by $e_1$ gives a contracting $L$-homotopy for $V$
and therefore $V$ is the relation complex for $A_n$ (cf.~Remark
\ref{remuniq}).
This homotopy is not equivariant with respect to the Weyl group $W=S_{n+1}$.
The $W$-equivariant homotopy constructed in \S \ref{reflarrangements} is multiplication by $\frac1{n+1}\sum_{i=1}^{n+1}e_i$ on $V$,
corresponding to the \admiss\ section $d_0 (v) = \sum_{i=1}^{n+1}e_iv$ with Coxeter number $h^{A_n} = n+1$.
Note that as a $W$-module,
\[
V_k\simeq\wedge^{k+1}(\St\oplus\C)=\wedge^k\St\dsum\wedge^{k+1}\St,
\]
where $\St$ is the standard $n$-dimensional representation of $W$.
Recall that the representations $\wedge^k\St$, $k = 0,\dots,n$, are indecomposable.

\subsection*{The other infinite families}
The restrictions of the reflection arrangements of type $B$ and $D$
are isomorphic to the hyperplane arrangements $\Phi_{n,m}$, $m\le n$, given by the vectors
$e_i\pm e_j$, $1\le i < j\le n$, and $e_i$, $1 \le i\le m$,
in an $n$-dimensional space $E$ with basis $e_1,\dots,e_n$ and the standard scalar product.
For $m=n$ we obtain a reflection arrangement of type $B_n$, and for $m=0$ an arrangement of type $D_n$.
Here the intersection lattice $L = L^{\Phi_{n,m}}$ is the lattice whose elements consist of the following data:
\begin{enumerate}
\item a (possibly empty) subset $J$ of $\{1,\dots,n\}$, with the restriction that
if $J=\{j\}$ is a singleton, then $j\le m$,
\item an unordered partition $\{ I_1,\dots,I_k \}$ of $\{1,\dots,n\}\setminus J$,
\item for each $j=1,\dots,k$ a class of functions $\epsilon_j:I_j\rightarrow\{\pm1\}$,
where we identify $\epsilon_j$ and $-\epsilon_j$.
\end{enumerate}
The rank function is $\card{J}+\sum_{j=1}^k(\card{I_j}-1)$.
%The corresponding
%vector space is the space of all $\sum_{i=1}^n \lambda_i e_i$ with $\lambda_i = 0$ for
%$i \in J$ and $\sum_{i \in I_j} \epsilon_j (i) \lambda_i = 0$ for $j=1,\dots,k$.
Clearly, $L^{\Phi_{n,m}}$ is a sublattice of $L^{B_n}$.

For any $I\subset\{1,\dots,n\}$ with $\card{I}>1$
and a function $\epsilon:I\rightarrow\{\pm1\}$ let $x_{(I,[\epsilon])}$ be
the element of $L$ of rank $\card{I}-1$ corresponding to $J=\emptyset$, the partition
whose only subset of size $>1$ is $I$, and the class of $\epsilon$.
Similarly, for any $J\subset\{1,\dots,n\}$ (possibly empty but with the restriction on singletons above) let
$y_J\in L_{\card{J}}$ correspond to the partition of $\{1,\dots,n\}\setminus J$ where every block is a singleton.
Let $L_{A}=\{x_{(I,[\epsilon])}\}$ and $L_{B}=\{y_J\}$. For $x \in L_A$ the corresponding subarrangement
$\Phi_{n,m,\le x}$ is of type $A_{\rk (x)}$, and for $y = y_J \in L_B$
the subarrangement
$\Phi_{n,m,\le y}$ is of type $\Phi_{\card{J}, \card{J \cap \{1,\dots,m\}}}$.

We first construct the relation complexes of the arrangements of type $B_n$ (i.e.~in the case $m=n$).
Let $W$ be the quotient of the vector space with basis
\[
e_{(I,\epsilon)},\ \ \emptyset\neq I\subset\{1,\dots,n\},\epsilon:I\rightarrow\{\pm1\},
\]
by the span of the vectors $e_{(I,\epsilon)}+(-1)^{\card{I}}e_{(I,-\epsilon)}$.
We grade $W$ by $\card{I}$ and observe that $W$ forms a chain complex with
respect to the differential
\[
\der e_{(I=\{i_1,\dots,i_k\},\epsilon)}=
\sum_{j=1}^k(-1)^{j+1}\epsilon(i_j)e_{(I\setminus\{i_j\},\epsilon|_{I\setminus\{i_j\}})}.
\]
For any $\emptyset\neq J\subset\{1,\dots,n\}$
let $W_J  \subset W_{\card{J}}$ be the span of the vectors $e_{(J,\epsilon)}$ for all $\epsilon: J \to \{\pm 1\}$.
Therefore, $\dim W_J = 2^{\abs{J}-1}$.

Let $V^{B_n}$ be the mapping cone of the identity map on $(W,\der)$.
That is, $V^{B_n}_k=W_k\oplus W_{k+1}$, $k=0,\dots,n$, with the differential
\[
V^{B_n}_k \to V^{B_n}_{k-1}, \quad
(w_k,w_{k+1})\mapsto (-\der_k w_k,w_k+\der_{k+1}w_{k+1}).
\]
$V^{B_n}$ is compatibly $L^{B_n}$-graded by
\begin{gather*}
V^{B_n}_{x_{(I,[\epsilon])}}=\{0\}\times\C e_{(I,\epsilon)}\subset V^{B_n}_{\card{I}-1},\\
V^{B_n}_{y_J}=W_J\times\{0\}
\subset V^{B_n}_{\card{J}},\ \ J\ne\emptyset,\\
V^{B_n}_0=\{0\}\times W_1,\\
V^{B_n}_x=0,\ \ \ \text{if }x\notin L^{B_n}_{A}\cup L^{B_n}_{B}.
\end{gather*}
The map $d(w_{k},w_{k+1})=(w_{k+1},0)$ is a contracting $L$-homotopy for $V^{B_n}$.
We can identify $V^{B_n}_0 \simeq W_1$ with the base space $E$. Considering
$\der_1: V^{B_n}_1 \to V^{B_n}_0$ and invoking Remark \ref{remuniq}, we see that $V^{B_n}$ is the relation complex of the arrangement $B_n$.
However, the $L$-homotopy $d$ is obviously different from the homotopy constructed in \S \ref{reflarrangements}, which we will recover below.
We have $\dim V^{B_n}_y = 2^{\rk (y) -1}$ for $y \in L^{B_n}_B \setminus \{ 0 \}$.

We now construct the relation complex of $\Phi_{n,m}$ for arbitrary $m \le n$ as a subcomplex of $V^{B_n}$.
For any $j>m$ let $(U^j,\der^j)$ be the chain complex with basis elements $f_J^j$,
$J\subset\{1,\dots,n\} \setminus \{ j \}$, of degree $\card{J}+1$, and differential
\[
\der^jf_{J=\{i_1,\dots,i_k\}}^j=\sum_{l=1}^k (-1)^{l+1} f_{J\setminus\{i_l\}}^j.
%signs
\]
For any $k\ne j$ a homotopy $d^{U^j,k}$ on $U^j$ is given by
\[
f_J^j\mapsto
\begin{cases}
(-1)^{\card{\{x \in J: x < k \}}} f_{J\cup\{k\}}^j,&k\not\in J,\\
0,&\text{otherwise.}
\end{cases}
\]%signs
We also set $d^{U^j,j}=0$.
Let $U:=\dsum_{j > m} U^j$ with the differential $\der^U=\dsum\der^j$.
Define $d^{U,k}=\dsum_{j>m}d^{U^j,k}$ and set
\[
D^U=\begin{cases}\frac1{n-1}\sum_{k=1}^nd^{U,k},&m=0,\\
d^{U,1},&\text{otherwise.}\end{cases}
\]
Then $D^U$ is a homotopy of $U$.

The map $\phi:W\rightarrow U$ given by
\[
e_{(I,\epsilon)}\mapsto\prod_{i\in I}\epsilon(i)
\sum_{x\in I, x > m} (-1)^{\card{\{i \in I: i > x\}}} \epsilon(x)f_{I\setminus\{x\}}^x
\]
is easily seen to be a surjective map of chain complexes.
Let $K \subset W$ be its kernel. Clearly, $K = \oplus_{J \neq \emptyset} K_J$ for $K_J = K \cap W_J$.
Furthermore, we have an exact sequence
\[
0\rightarrow K\oplus W[1]\rightarrow V^{B_n}\xrightarrow{(w,w') \mapsto \phi (w)} U\rightarrow0.
\]
Note also that in degree one $K_1$ is just the span of $e_1, \dots, e_m$ inside $W_1 \simeq E$.
It follows that the complex $K\oplus W[1]$ is exact and compatibly $L^{\Phi_{n,m}}$-graded.
Therefore, the relation complex of $\Phi_{n,m}$ is given by
$V^{\Phi_{n,m}}=K\oplus W[1]$. For $y = y_J \in L^{\Phi_{n,m}}_B \setminus \{ 0 \}$ we have
$\dim V^{\Phi_{n,m}}_y = \dim K_J = 2^{\card{J}-1} - \card{J \cap \{m+1, \dots, n\}}$.

Consider for any $k = 1, \dots, n$ the linear map $d^{W,k}:W\rightarrow W$ given by
\[
d^{W,k}e_{(I,\epsilon)}=\begin{cases}
(-1)^{\card{\{x \in I: x < k \}}}
\frac12(e_{(I\cup\{k\},\epsilon\cup\{(k,1)\})}-e_{(I\cup\{k\},\epsilon\cup\{(k,-1)\})}),&
k\notin I,\\
0,&\text{otherwise.}\end{cases}
\]
It is easy to check that $\phi d^{W,k}=d^{U,k}\phi$.
In particular, $d^{W,k}$ preserves $K$.
Set
\[
D^W=\begin{cases}\frac1{n-1}\sum_{k=1}^nd^{W,k},&m=0,\\
d^{W,1},&\text{otherwise.}\end{cases}
\]
Then $\phi D^W=D^U\phi$. The map
\begin{equation} \label{phihomotopy}
(w_k,w_{k+1})\mapsto
(-D_k^Ww_k+w_{k+1}-\der_{k+2}D_{k+1}^Ww_{k+1}-D_k^W\der_{k+1} w_{k+1},D_{k+1}^Ww_{k+1})
\end{equation}
is clearly an $L$-homotopy of the complex $V^{\Phi_{n,m}}$. For $m=0$ we obtain
the $L$-homotopy of the relation complex of $D_n$ constructed in \S \ref{reflarrangements}. The Coxeter number of $D_n$ is $h^{D_n} = 2(n-1)$.

If we consider the complex $V^{B_n} = W \oplus W[1]$ instead of $V^{D_n}$ and change the map $D^W$ to $\frac1{n}\sum_{k=1}^nd^{W,k}$ in \eqref{phihomotopy},
then we obtain the $L$-homotopy of the relation complex of $B_n$ constructed in \S \ref{reflarrangements}. The Coxeter number of $B_n$ is $h^{B_n} = 2n$.

\def\cprime{$'$} \def\Dbar{\leavevmode\lower.6ex\hbox to 0pt{\hskip-.23ex
  \accent"16\hss}D} \def\cftil#1{\ifmmode\setbox7\hbox{$\accent"5E#1$}\else
  \setbox7\hbox{\accent"5E#1}\penalty 10000\relax\fi\raise 1\ht7
  \hbox{\lower1.15ex\hbox to 1\wd7{\hss\accent"7E\hss}}\penalty 10000
  \hskip-1\wd7\penalty 10000\box7}
  \def\polhk#1{\setbox0=\hbox{#1}{\ooalign{\hidewidth
  \lower1.5ex\hbox{`}\hidewidth\crcr\unhbox0}}} \def\dbar{\leavevmode\hbox to
  0pt{\hskip.2ex \accent"16\hss}d}
  \def\cfac#1{\ifmmode\setbox7\hbox{$\accent"5E#1$}\else
  \setbox7\hbox{\accent"5E#1}\penalty 10000\relax\fi\raise 1\ht7
  \hbox{\lower1.15ex\hbox to 1\wd7{\hss\accent"13\hss}}\penalty 10000
  \hskip-1\wd7\penalty 10000\box7}
  \def\ocirc#1{\ifmmode\setbox0=\hbox{$#1$}\dimen0=\ht0 \advance\dimen0
  by1pt\rlap{\hbox to\wd0{\hss\raise\dimen0
  \hbox{\hskip.2em$\scriptscriptstyle\circ$}\hss}}#1\else {\accent"17 #1}\fi}
  \def\bud{$''$} \def\cfudot#1{\ifmmode\setbox7\hbox{$\accent"5E#1$}\else
  \setbox7\hbox{\accent"5E#1}\penalty 10000\relax\fi\raise 1\ht7
  \hbox{\raise.1ex\hbox to 1\wd7{\hss.\hss}}\penalty 10000 \hskip-1\wd7\penalty
  10000\box7} \def\lfhook#1{\setbox0=\hbox{#1}{\ooalign{\hidewidth
  \lower1.5ex\hbox{'}\hidewidth\crcr\unhbox0}}}
\providecommand{\bysame}{\leavevmode\hbox to3em{\hrulefill}\thinspace}
\providecommand{\MR}{\relax\ifhmode\unskip\space\fi MR }
% \MRhref is called by the amsart/book/proc definition of \MR.
\providecommand{\MRhref}[2]{%
  \href{http://www.ams.org/mathscinet-getitem?mr=#1}{#2}
}
\providecommand{\href}[2]{#2}

\end{document}